\DeclareSymbolFont{cyrletters}{OT2}{wncyr}{m}{n}
\DeclareMathSymbol{\Sha}{\mathalpha}{cyrletters}{"58}
\title[The spin of prime ideals and level-raising of Galois representations]{The spin of prime ideals and level-raising of even Galois representations}
\date{\today}
\author{Marius Fischer}
\email{marius.fischer@math.au.dk}
\address{Department of Mathematics, Aarhus University, 1530-432, DK-8000 Aarhus C, Denmark}
\author{Peter Vang Uttenthal}
\email{petervang@math.au.dk}
\address{Department of Mathematics, Aarhus University, 1530-421, DK-8000 Aarhus C, Denmark}
\newcommand{\SL}{\operatorname{SL}}
\newcommand{\Gal}{\mathrm{Gal}}
\newcommand{\Q}{\mathbb{Q}}
\newcommand{\Z}{\mathbb{Z}}
\newcommand{\F}{\mathbb{F}}
\newtheorem{theorem}{Theorem}
\newtheorem{lemma}[theorem]{Lemma}
\newtheorem{definition}[theorem]{Definition}
\newtheorem{proposition}[theorem]{Proposition}
\newtheorem{corollary}[theorem]{Corollary}
\newtheorem{conjecture}[theorem]{Conjecture}
\theoremstyle{definition}
\newtheorem{remark}[theorem]{Remark}
\numberwithin{theorem}{section}
\numberwithin{equation}{section}
\begin{document}
\begin{abstract} 
By extending the notion of spin of prime ideals, we show that a short character sum conjecture implies that the set of primes raising the level of a certain even Galois representation has density 2/3, as conjectured by Ramakrishna in 1998. 
\end{abstract}
\maketitle
\tableofcontents

\section{Introduction}\label{introduction}
Let $G_{\mathbb{Q}}$ denote the absolute Galois group of $\mathbb{Q}$, and suppose $\rho: G_{\mathbb{Q}}\rightarrow \mathrm{GL}_2(\overline{\mathbb{Q}}_{p})$ is an irreducible $p$-adic Galois representation that is unramified outside a finite set of places. Assume further that $\rho$ is even, meaning $\det \rho(c)=1$ for a complex conjugation $c\in G_\mathbb{Q}$. The Fontaine-Mazur Conjecture \cite{FontaineMazur1995} predicts that $\rho$ can only arise from algebraic geometry if it is the Tate-twist of an even representation with finite image. In 1998, using only Galois cohomology, Ramakrishna \cite{even1} constructed the first example of a non-geometric even representation as a lift of a residual representation $\overline{\rho} : G_{\mathbb{Q} } \rightarrow \mathrm{SL}_2(\mathbb{F}_3)$ to an even surjective representation
\begin{equation}\label{even}
\rho : G_{\mathbb{Q} } \rightarrow \operatorname{SL}_2(\mathbb{Z}_3)
\end{equation}
ramified only at $3$ and $349$. Subsequently,  \cite{even2} gave a criterion on a prime $p$
for there to exist a unique surjective lift $\rho^{(p)}: G_{\mathbb{Q}} \rightarrow \mathrm{SL}_2(\mathbb{Z}_3)$ of $\overline{\rho}$
raising the level of $\rho$, cf. Section \ref{proofs_for_even_representations}.
Proving that there are infinitely many primes raising the level of \eqref{even} would give the first non-trivial infinite family of even representations onto $\mathrm{SL}_2(\mathbb{Z}_3)$ with at most three places in the level. Moreover, it would provide a counterpart to Ribet's work \cite{Ribet} on level-raising of modular Galois representations. 
\begin{comment}
Note that \cite[Corollary 1b]{ramakrishna2002} constructed infinitely many even representations onto $\mathrm{SL}_2(\mathbb{Z}_7)$ ramified at finitely many primes, 
but had no control 
over the number of additional ramified primes needed. In contrast, if $p$ raises the level of $\rho$ then $\rho^{(p)}$ has exactly three places in the level,
so it remains attractive to study level-raising of \eqref{even}. 
\end{comment}
The level-raising criterion at $p$ was found by 
prescribing a local shape of the new representation at $p$ for all $p$ in the subset 
\begin{equation}
\mathcal{C} := \{ p \equiv 1 \bmod 3:  \overline{\rho}(\operatorname{Frob}_p) \text{ has order $3$} \}.
\end{equation}
After giving a heuristic argument that the density in $\mathcal{C}$ of primes raising the level should be $2/3$, \cite[p. 99]{even2} states that
\emph{we do not even know if this happens infinitely often}. The main difficulty is that the criterion for level-raising is a splitting condition on $p$ in a number field that depends on $p$ itself, so the Chebotarev density theorem does not apply.\\

\noindent
Conditionally, we prove that the set of level-raising primes indeed has density $2/3$ in $\mathcal{C}$. Our proof is based on the spin of prime ideals first introduced by Friedlander, Iwaniec, Mazur and Rubin \cite{FIMR}. As in their work and in many other spin problems, we must assume a conjecture on short character sums. For each integer $n$, we state a Conjecture $C_n$ similar to \cite[p. 738, Conjecture $C_n$]{FIMR}, but adapted from quadratic characters to cubic characters in the obvious way. If $\chi$ is a non-principal cubic Dirichlet character of modulus $q$, our Conjecture $C_n$ stipulates a power saving in any incomplete character sum of $\chi$ over an interval of length $q^{1/n}$ (see Section \ref{short_character_sums} for further details). Our main result is the following.
\begin{theorem}\label{main_theorem}
Assume Conjecture $C_{12}$. Then the set of primes raising the level of $\rho$ has density $2/3$ in $\mathcal{C}$, i.e. 
  \begin{equation*}
\lim_{X \rightarrow \infty } \frac{\# \left\{ p \in \mathcal{C} \,:\, \textrm{$p \leq X$ and $p$ raises the level of $\rho$} \right\}}{\# \left\{ p \in \mathcal{C}\,:\,p \leq X \right\}} = \frac{2}{3}.
\end{equation*}
\end{theorem}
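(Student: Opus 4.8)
The plan is to translate the level-raising criterion of Section~\ref{proofs_for_even_representations} into an equidistribution statement for a cubic analogue of the spin of prime ideals, and then to prove that equidistribution along the lines of Friedlander--Iwaniec--Mazur--Rubin, invoking Conjecture~$C_{12}$ only to control the Type~I (linear) sums. First I would unwind the criterion: for $p\in\mathcal C$, the condition that $p$ raises the level of $\rho$ is a splitting condition for $p$ in a degree-$3$ extension $F(\sqrt[3]{\lambda\alpha_{\mathfrak p}})$ of a fixed number field $F\supseteq\mathbb{Q}(\omega)$, where $\mathfrak p$ is a degree-one prime of $F$ above $p$, $\alpha_{\mathfrak p}$ generates $\mathfrak p\,\mathfrak b^{-1}$ for a fixed integral ideal $\mathfrak b$ in the inverse class of $\mathfrak p$, and $\lambda\in F^{\times}$ is fixed and records the ramification at $349$; by Chebotarev the primes of $\mathcal C$ are precisely the $p\equiv 1\bmod 3$ admitting such a $\mathfrak p$ with a prescribed Frobenius in the field cut out by $\overline\rho$. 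Away from the finitely many prime ideals dividing $3\cdot 349$ or obstructing principality, the criterion then reads $\operatorname{spin}(\mathfrak p)\ne 1$, where $\operatorname{spin}(\mathfrak p):=\left[\lambda\alpha_{\mathfrak p}/\mathfrak p\right]_3\in\mu_3$ and $[\,\cdot\,/\,\cdot\,]_3$ is the cubic power residue symbol. Extending the notion of spin is precisely the work of checking that $\operatorname{spin}(\mathfrak p)$ is well defined (independent of the choice of $\alpha_{\mathfrak p}$ modulo cubes of units) in a field of nontrivial class group and twisted by $\lambda$, and of recording its multiplicativity and reciprocity behaviour under a factorisation $\mathfrak p=\mathfrak m\mathfrak n$, which is what makes the sieve run.

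By Weyl's criterion for $\mu_3$, the theorem follows once one shows
\[
S_j(X):=\sum_{\substack{\mathfrak p\in\mathcal C,\ N\mathfrak p\le X}}\operatorname{spin}(\mathfrak p)^{\,j}=o\!\left(\frac{X}{\log X}\right)\qquad(j=1,2),
\]
since the $j=0$ sum matches $\#\{p\in\mathcal C:p\le X\}$ up to a positive constant (the number of suitable $\mathfrak p$ above each $p$) and a negligible error, and orthogonality of the characters of $\mu_3$ then forces density $\tfrac13$ for each value of $\operatorname{spin}$, hence $\tfrac23$ for level-raising. To bound $S_j(X)$ I would detect the prime $\mathfrak p$ by a Vaughan-type combinatorial identity in $\mathcal O_F$ (carrying the fixed Chebotarev condition along throughout), reducing $S_j(X)$ to \emph{Type~I} sums $\sum_{N\mathfrak d\le D}a_{\mathfrak d}\sum_{N\mathfrak n\le X/N\mathfrak d}\operatorname{spin}(\mathfrak d\mathfrak n)^{j}$ and \emph{Type~II} bilinear sums $\sum_{\mathfrak m,\mathfrak n}a_{\mathfrak m}b_{\mathfrak n}\operatorname{spin}(\mathfrak m\mathfrak n)^{j}$ with divisor-bounded coefficients.

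For the Type~I sums, cubic reciprocity in $\mathbb{Q}(\omega)$ turns $\mathfrak n\mapsto\operatorname{spin}(\mathfrak d\mathfrak n)^{j}$ into a fixed non-principal cubic character of modulus $q\asymp N\mathfrak d$ times a factor that is monotone on boxes (removed by partial summation); parametrising the summation box by one coordinate leaves this character summed over an interval of length a fixed power of $X/N\mathfrak d$, so Conjecture~$C_{12}$ supplies a power saving as soon as $N\mathfrak d$ is below a fixed power of $X$ --- the large but explicit degree of $F$ shortens this interval relative to the conductor, and that is what forces the index $12$ rather than a smaller one, the cut-off $D$ being chosen so that the remaining range is genuinely bilinear. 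For the Type~II sums I would open the symbol by reciprocity so that $\operatorname{spin}(\mathfrak m\mathfrak n)^{j}$ factors, up to lower-order twists, through $[\mathfrak m/\mathfrak n]_3\,[\mathfrak n/\mathfrak m]_3^{\pm1}$; then Cauchy--Schwarz in one variable, expansion of the square, and Weil's bound for the resulting complete cubic character sums give square-root cancellation \emph{unconditionally}, provided $N\mathfrak m$ and $N\mathfrak n$ both exceed a fixed power of $X$. Combining the two ranges yields $S_j(X)=o(X/\log X)$, and hence the theorem.

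The main obstacle is the Type~II estimate. Because the spin is not multiplicative, one must first engineer the reciprocity step so that the bilinear form genuinely factors, and then, before the Weil bound can be applied, control simultaneously the unit contributions, the finitely many bad prime ideals, the ineffective boundary ranges of the summation boxes, and the extra bookkeeping forced by the nontrivial class group --- this is the unconditional heart of the argument and the reason the extended notion of spin has to be set up with real care. A secondary, purely quantitative point is to pin down the exponents so that $C_{12}$, rather than some stronger (and possibly false) $C_n$, is what is needed: this comes down to balancing the Type~I range against the quality of the Type~II range and the degree of $F$.
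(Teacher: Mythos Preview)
Your architecture is right---translate level-raising into a cubic residue symbol, then run a Vinogradov-type sieve with Type~I controlled by the short character sum conjecture and Type~II unconditional---but the actual spin symbol is not of the shape you propose, and getting its shape right is where most of the work lies.

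In the paper's setup, $p\in\mathcal C$ factors in the quartic field $F$ as $\mathfrak p_1\mathfrak p_2$ with $f_{\mathfrak p_1/p}=3$ and $f_{\mathfrak p_2/p}=1$, and the level-raising criterion (Proposition~\ref{field_lowering}) is that a generator $\pi_1$ of the \emph{degree-three} prime $\mathfrak p_1$ fail to be a cube modulo the \emph{degree-one} prime $\mathfrak p_2$. To obtain a spin symbol one must express $\pi_1$ in terms of $\pi_2$: the paper shows $\pi_1=N_{K/F}(\sigma(\pi_2))$ for any $\sigma\in\Gal(K/\Q)\setminus\Gal(K/F)$, so the symbol is
\[
s_{\mathfrak p}=\Bigg(\frac{N_{K/F}(\sigma(\alpha))}{\mathfrak p}\Bigg)_{3,F(\zeta_3)},
\]
where $\alpha$ generates $N_{F(\zeta_3)/F}(\mathfrak p)$. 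Your proposed symbol $[\lambda\alpha_{\mathfrak p}/\mathfrak p]_3$, with $\alpha_{\mathfrak p}$ a generator of $\mathfrak p$ itself twisted only by a fixed $\lambda$, misses this Galois twist entirely and does not encode the criterion. The twist is not cosmetic: it forces the spin to live over the non-Galois degree-$8$ field $F(\zeta_3)$ (working over its Galois closure $K(\zeta_3)$ would mean summing over degree-$3$ primes, which is hopeless), and it dictates the conductor of the Type~I character.

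This also reverses your assessment of where the difficulty is. The Type~II sums are handled cleanly by existing bilinear machinery (the paper invokes \cite{Koymans_Rome_2024}); the genuine new content is on the Type~I side. After writing $\alpha=a+\beta$ with $a\in\Z$, one gets \emph{two} residue symbols, with moduli dividing $(\sigma(\beta)-\beta)$ and $(\overline\sigma(\beta)-\beta)$. A field-lowering step (Lemma~\ref{field_lowering_lemma}) kills the second and drops the first to the degree-$12$ subfield $E\subset K(\zeta_3)$ fixed by $\sigma$; this is precisely what gives modulus $\ll X^{3/2}$ against interval length $\ll X^{1/8}$ and hence the index $12$. The remaining obstacle---counting $\beta$ for which $N_{E/\Q}(\sigma(\beta)-\beta)$ has large squarefull part---lives on a rank-$6$ lattice inside a degree-$12$ field, exactly the borderline case where the lattice-point counts of \cite{Koymans} give nothing. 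Section~\ref{counting_ideals}, which combines an effective Hilbert irreducibility theorem with Schmidt's subspace theorem to push past this threshold, is written specifically to close that gap; your proposal does not anticipate it.
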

\noindent The above theorem is the first application of spin to a problem from the deformation theory of Galois representations, and we believe that there are similar problems where our arguments can be applied.
Note that Ribet's results in the odd case 
do not give information on how many representations raise the level of a given modular representation. In contrast, whenever a prime $p$ can be added to the level of $\rho$ in
Theorem \ref{main_theorem}, 
the new representation 
$\rho^{(p)}$ is unique.\\

\noindent
We now outline the proof of Theorem \ref{main_theorem}. 
The field fixed by the kernel of the projectivization of $\overline{\rho}$
is a totally real $A_4$ extension $K/\Q$ ramified only at $\ell=349$.
For $p \in \mathcal{C}$, let $K^{(p)}$ denote the maximal $3$-elementary extension of $K$ unramified outside $3$ and $p$. Then there is a subset $\mathcal{C}_0\subset\mathcal{C}$ of density zero such that for all $p\in \mathcal{C}\setminus\mathcal{C}_0$, 
\begin{center}
\emph{$p$ raises the level of $\rho$ if and only if $p$ has inertial degree $9$ in $K^{(p)}$.}
\end{center}
%The first step is to define a spin symbol that governs the level-raising condition. 
The first step in our proof is to show that this condition is governed by a \emph{spin symbol}. Let $F$ denote a quartic subfield of $K$ and $\zeta_3$ a primitive $3$\textsuperscript{rd} root of unity. In Section \ref{spin_symbol_section}, we define for a class of integral ideals $\mathfrak{a}$ of $F(\zeta_3)$ a spin symbol $s_{\mathfrak{a}}$ valued in $\left\{1,\zeta_3 , \zeta_3^2 \right\}$. Let $(\frac{\cdot}{\cdot})_{3,F(\zeta_3)}$ denote the cubic residue symbol over $F(\zeta_3)$. Then the results of Section \ref{spin_symbol_section} can be summarized as follows:

\begin{theorem}\label{spin_symbol_theorem}
There is a modulus $\mathbf{m}$ of $F(\zeta_3)$ and a subgroup $H_0$ of the ray-class group of $\mathbf{m}$ such that if $\mathfrak{a} \in H_0$, and we set
  \begin{equation*}
s_{\mathfrak{a}} := \left( \frac{N_{K/F}(\sigma(\alpha))}{\mathfrak{a}} \right)_{3,F(\zeta_3)}
\end{equation*}
where $\alpha$ is a generator of $N_{F(\zeta_3)/F}(\mathfrak{a})$, and $\sigma \in \mathrm{Gal}(K/\mathbb{Q})-\mathrm{Gal}(K/F)$, then $s_{\mathfrak{a}}$ is independent of the choice of $\alpha$ and $\sigma$. If $p \in \mathcal{C}$ is coprime to $\mathbf{m}$, then $p$ has degree $1$ prime factor $\mathfrak{p}$ in $F(\zeta_3)$ that is inert in $K(\zeta_3)$ and lies in $H_0$. Moreover for $p\in\mathcal{C}\setminus\mathcal{C}_0$, $p$ raises the level of $\rho$ if and only if $s_{\mathfrak{p}}\neq 1$.
\end{theorem}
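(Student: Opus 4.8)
My plan is to unwind the criterion recalled above --- that for $p\in\mathcal{C}\setminus\mathcal{C}_0$ the prime $p$ raises the level of $\rho$ exactly when it has inertial degree $9$ in $K^{(p)}$ --- into a statement about a Frobenius element, render it Kummer-theoretic by adjoining $\zeta_3$, and then recognise the resulting cubic-residue condition as $s_{\mathfrak{p}}\neq 1$. Since $\overline{\rho}(\operatorname{Frob}_p)$ has order $3$, the prime $p$ is unramified in $K$ with decomposition group a Sylow $3$-subgroup of $\Delta:=\Gal(K/\Q)\cong A_4$; choosing the prime above $p$ so that this decomposition group equals $\Gal(K/F)$, the field $F$ acquires a degree-$1$ prime $\mathfrak{q}_1$ below $p$ that is inert in $K/F$, and (as $p\equiv 1\bmod 3$) $\mathfrak{q}_1$ splits in $F(\zeta_3)/F$ into degree-$1$ primes that are inert in $K(\zeta_3)/F(\zeta_3)$. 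Because $K^{(p)}/K$ is elementary abelian of exponent $3$, ``$p$ has inertial degree $9$ in $K^{(p)}$'' says exactly that the relevant Frobenius at a prime $\mathfrak{P}\mid p$ of $K$ is nontrivial on $\Gal(K^{(p)}/K)$, i.e.\ that $\mathfrak{P}$ is inert in some cyclic cubic subextension $L/K$ of $K^{(p)}$. So the first real task is to pin down $L$: I would describe the ``$p$-new'' part of $\Gal(K^{(p)}/K)$ as a $\Delta$-module --- genus theory together with the Sylow shape of the decomposition group shows it is a subquotient of the permutation module $\mathbb{F}_3[\Delta/\Gal(K/F)]\cong\mathbb{F}_3\oplus W$, where $W$ is the unique $3$-dimensional irreducible $\mathbb{F}_3[\Delta]$-module --- and then read off which cubic line is singled out by the level-raising deformation problem.

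The second task is to make $L$ explicit over $F(\zeta_3)$. Adjoining $\zeta_3$ turns cyclic cubic extensions into Kummer extensions, and the descent condition forces the Kummer generator into the minus-eigenspace of $\Gal(K(\zeta_3)/K)$, so $L(\zeta_3)=K(\zeta_3)(\sqrt[3]{\theta})$ with $(\theta)$ supported above $p$. Tracing the $\Delta$-action through the decomposition above, the projection onto the summand of $\mathbb{F}_3[\Delta/\Gal(K/F)]$ attached to $F$ is effected by the relative norm $N_{K/F}$, a coset $\sigma\in\Gal(K/\Q)-\Gal(K/F)$ records which conjugate quartic field carries the $p$-new line, and the ideal datum over $p$ is carried by a generator $\alpha$ of $N_{F(\zeta_3)/F}(\mathfrak{p})$ (an ideal of $F$ which $H_0$ is arranged to make principal); this is where $N_{K/F}(\sigma(\alpha))$ originates. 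Unwinding the Kummer pairing together with the Artin reciprocity map, and then using cubic reciprocity over $F(\zeta_3)$ to push the symbol down to $\mathfrak{p}$ itself (legitimate since $\mathfrak{p}$ is inert in $K(\zeta_3)/F(\zeta_3)$), would turn ``$\mathfrak{P}$ inert in $L$'' into ``$s_{\mathfrak{p}}\neq 1$''.

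For the well-definedness and existence assertions I would argue as follows. Independence of $\sigma$ is essentially formal: $N_{K/F}(\sigma(\alpha))$ is unchanged under multiplying $\sigma$ on the right by $\Gal(K/F)$ (which fixes $\alpha\in F$) or on the left (by reindexing the norm), so it depends only on the double coset $\Gal(K/F)\,\sigma\,\Gal(K/F)$; a Sylow $3$-subgroup of $A_4$ being self-normalising, there is just one such double coset outside $\Gal(K/F)$, and in fact the identity $\prod_{g\in\Delta}g(\alpha)=N_{F/\Q}(\alpha)^3$ pins the common value down to $N_{K/F}(\sigma(\alpha))=N_{F/\Q}(\alpha)/\alpha$. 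Independence of $\alpha$ is where $\mathbf{m}$ and $H_0$ are needed: two generators of $N_{F(\zeta_3)/F}(\mathfrak{a})$ differ by a unit of $F$, which alters $s_{\mathfrak{a}}$ by the cubic symbol of that unit at $\mathfrak{a}$, so one takes $\mathbf{m}$ (a power of the primes above $3$ together with the archimedean places) and $H_0$ so that, for $\mathfrak{a}\in H_0$, the ideal $N_{F(\zeta_3)/F}(\mathfrak{a})$ is principal and every unit of $F$ is a cube modulo $\mathfrak{a}$. Lastly, that every $p\in\mathcal{C}$ coprime to $\mathbf{m}$ possesses a degree-$1$, $K(\zeta_3)$-inert prime factor $\mathfrak{p}$ in $F(\zeta_3)$ lying in $H_0$ follows by combining the decomposition-group analysis above with the relevant class-number and unit-index facts for $F$ and $F(\zeta_3)$, verified directly for our field.

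The step I expect to be the main obstacle is the identification carried out in the first two paragraphs: pinning down, inside $\Gal(K^{(p)}(\zeta_3)/K(\zeta_3))$, the exact cyclic cubic line forced by the deformation problem and recognising its Kummer generator as $N_{K/F}(\sigma(\alpha))$ modulo cubes and units, which needs simultaneous control of the $A_4$-module structure and the $\Gal(K(\zeta_3)/K)$-eigenspace decomposition --- all while choosing $\mathbf{m}$ and $H_0$ tightly enough for $s_{\mathfrak{a}}$ to be genuinely well defined, yet loosely enough to contain every $\mathfrak{p}$ coming from $\mathcal{C}$, and checking that the primes excluded by the construction have density zero so that $\mathcal{C}_0$ absorbs them.
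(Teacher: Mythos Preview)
Your overall architecture is reasonable but differs from the paper's, and there is one genuine gap.

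\textbf{Comparison of approaches.} The paper does \emph{not} analyse $\Gal(K^{(p)}/K)$ as an $A_4$-module or identify a Kummer line inside it. Instead it imports from \cite{even2} a field-lowering statement (Proposition~\ref{field_lowering}): $f(p,K^{(p)}/\Q)=9$ iff $f(\mathfrak{p}_1,F^{(\mathfrak{p}_2)}/F)=3$, where $F^{(\mathfrak{p}_2)}$ is the maximal $3$-elementary abelian extension of $F$ unramified away from $3_1$ and $\mathfrak{p}_2$. With this in hand, the proof that $s_{\mathfrak{p}}\neq 1$ captures level-raising becomes a short class-field-theory computation over $F$ using that $\mathfrak{f}(F^{(\mathfrak{p}_2)}/F)=3_1^2\mathfrak{p}_2$ and that $H_F(3_1^2)$ is trivial (both facts from \cite{even2}), plus the fundamental exact sequence. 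Your route --- genus theory for the $p$-new part of $\Gal(K^{(p)}/K)$ as a subquotient of $\mathbb{F}_3[A_4/\Gal(K/F)]$, then Kummer theory to extract a generator --- is essentially redoing the work of \cite{even2} that the paper treats as input. It could succeed, but you correctly flag it as the main obstacle, and nothing in your plan tells me how you would isolate the correct line. Your independence-of-$\sigma$ argument and the formula $N_{K/F}(\sigma(\alpha))=N_{F/\Q}(\alpha)/\alpha$ are correct and match the paper's Lemma~\ref{canonical}.

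\textbf{The gap.} Your treatment of ``every $\mathfrak{p}$ over $p\in\mathcal{C}$ lies in $H_0$'' is wrong in spirit. You describe it as ``class-number and unit-index facts for $F$ and $F(\zeta_3)$, verified directly for our field''. But this is a statement about \emph{all} primes $p\in\mathcal{C}$ simultaneously, not about a fixed field, so it cannot be a finite numerical check. What has to be shown is that every unit $v\in\mathcal{O}_F^\times$ is a cube modulo $\mathfrak{p}_2$ for every such $p$. The paper proves this (Lemma~\ref{units_are_cubes_modulo_primes}) by a structural argument in the governing field $M=K'(\sqrt[3]{\mathcal{O}_K^\times})$: a Frobenius $\sigma_p$ of order $3$ over $p$ permutes three of the conjugates $v_2,v_3,v_4$ of $v$ cyclically while fixing $v_1=v$, and since $v_1v_2v_3v_4=\pm 1$ one manufactures a $\sigma_p$-fixed cube root of $v_1$ as $\pm(\sqrt[3]{v_2}\,\sigma_p(\sqrt[3]{v_2})\,\sigma_p^2(\sqrt[3]{v_2}))^{-1}$. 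This is the real content behind the choice of $\mathbf{m}$ and $H_0$, and your plan misses it entirely. Relatedly, your description of $\mathbf{m}$ is off: $F(\zeta_3)$ is totally complex, so there are no archimedean places in the modulus, and $\mathbf{m}$ must also contain the primes ramifying in $M$ (in particular $\ell=349$), not just those above $3$.
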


\noindent
The proof uses Artin reciprocity and other tools from class field theory. Following \cite{FIMR}, we now use a sieve \cite[Proposition 5.2]{FIMR} to prove that $s_{\mathfrak{p}}$ oscillates as $\mathfrak{p}$ ranges over the degree $1$ prime ideals over the primes in $\mathcal{C}$, and we must assume Conjecture $C_{12}$ in order to succeed. The outcome is the following theorem which, together with Theorem \ref{spin_symbol_theorem}, immediately implies Theorem \ref{main_theorem}.

\begin{theorem}\label{spin_main_theorem}
Assume Conjecture $C_{12}$. Then there exists $\delta>0$ such that
\begin{equation*}
\sum_{N_{F(\zeta_3)/\mathbb{Q}}(\mathfrak{p}) \leq X} s_{\mathfrak{p}} \ll X^{1-\delta}
\end{equation*}
where the sum is taken over all prime ideals $\mathfrak{p}$ that have degree $1$ over $\mathbb{Q}$ and are inert in $K(\zeta_3)$. The same estimate is true if $\mathfrak{p}$ is restricted to an abelian Chebotarev class of $F(\zeta_3)$ contained within the set of primes that are inert in $K(\zeta_3)$.  
\end{theorem}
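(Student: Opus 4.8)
\noindent\emph{Proof strategy.} The plan is to adapt the spin machinery of \cite{FIMR} to the cubic symbol $s_{\mathfrak{a}}$. I would first feed the characteristic function of the relevant primes into the combinatorial sieve \cite[Proposition 5.2]{FIMR} (a Heath--Brown / Vaughan-type decomposition), reducing $\sum_{N_{F(\zeta_3)/\Q}(\mathfrak{p})\le X} s_{\mathfrak{p}}$ to $O(1)$ sums of two shapes: \emph{Type I} sums $\sum_{\mathfrak{b}}\beta_{\mathfrak{b}}\sum_{\mathfrak{c}} s_{\mathfrak{b}\mathfrak{c}}$ with $N\mathfrak{b}$ short, and \emph{Type II} sums $\sum_{\mathfrak{b}}\sum_{\mathfrak{c}}\beta_{\mathfrak{b}}\gamma_{\mathfrak{c}} s_{\mathfrak{b}\mathfrak{c}}$ with $N\mathfrak{b}\asymp B$, $N\mathfrak{c}\asymp C$, $BC\asymp X$, and coefficients bounded by a divisor function. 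The side conditions — that $\mathfrak{p}$ have degree $1$ over $\Q$, be inert in $K(\zeta_3)$, and lie in the subgroup $H_0$ of Theorem \ref{spin_symbol_theorem} — are congruences modulo $\mathbf{m}$ together with an abelian Chebotarev condition, which I would detect by orthogonality over the appropriate ray-class group; this costs only bounded factors, and the same device handles the final refinement to an abelian Chebotarev class of $F(\zeta_3)$, uniformly in the characters that occur. To treat $s_{\mathfrak{b}\mathfrak{c}}$ as a genuine bilinear object one must fix, for each ideal involved, a generator of its $F(\zeta_3)/F$-norm inside a fixed fundamental domain for $\mathcal{O}_F^{\times}$; the independence assertion in Theorem \ref{spin_symbol_theorem} guarantees this choice does not affect $s$.

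The algebraic input is that $s$ is twisted-multiplicative. Writing $\alpha^{\flat}:=N_{K/F}(\sigma\alpha)$ for the element attached to a generator $\alpha$ of $N_{F(\zeta_3)/F}(\mathfrak{a})$, one has $s_{\mathfrak{a}}=\bigl(\tfrac{\alpha^{\flat}}{\mathfrak{a}}\bigr)_{3,F(\zeta_3)}$; since $N_{K/F}\circ\sigma$ is multiplicative and, by Theorem \ref{spin_symbol_theorem}, $s$ is insensitive to the choice of generator, we may take $\alpha=\beta\gamma$, $\alpha^{\flat}=\beta^{\flat}\gamma^{\flat}$, so that
\begin{equation*}
s_{\mathfrak{b}\mathfrak{c}}=s_{\mathfrak{b}}\,s_{\mathfrak{c}}\left(\frac{\gamma^{\flat}}{\mathfrak{b}}\right)_{3,F(\zeta_3)}\left(\frac{\beta^{\flat}}{\mathfrak{c}}\right)_{3,F(\zeta_3)}.
\end{equation*}
Since $F(\zeta_3)$ contains $\zeta_3$, cubic reciprocity over $F(\zeta_3)$ is clean: $\bigl(\tfrac{\gamma^{\flat}}{\mathfrak{b}}\bigr)_3$ differs from $\bigl(\tfrac{\mathfrak{b}}{\gamma^{\flat}}\bigr)_3$ only by a factor depending on the residues of $\gamma^{\flat}$ and of a fixed generator of $\mathfrak{b}$ modulo a fixed power of $(3)$ (a character in each argument separately, to a fixed modulus), and likewise for the other symbol. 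Two points distinguish this from \cite{FIMR}: the symbol is $3$-valued rather than $\pm1$-valued, so cubic Gauss and Jacobi sums replace the quadratic ones; and $\alpha^{\flat}$ is a norm form of degree $[K:F]=3$ in the coordinates of $\alpha$ rather than a single Galois conjugate, so although multiplicativity (all that the product formula needs) still holds, the archimedean size of $\alpha^{\flat}$ and the geometry of the boxes it occupies must be tracked with more care.

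The step I expect to be the main obstacle is the Type II estimate. Following \cite{FIMR}, I would apply Cauchy--Schwarz in one of the two variables, absorb the unimodular factors $s_{\mathfrak{b}},s_{\mathfrak{c}}$ into the coefficients, expand the square, and then use cubic reciprocity to move the resulting symbols so that, after collapsing, the inner sum over the remaining variable $\mathfrak{d}$ becomes an incomplete cubic character sum $\sum_{\mathfrak{d}}\chi(\mathfrak{d})$ whose modulus is built from the two frozen variables and has norm $\asymp B^{6}$ (because of the $\flat$-inflation). When the frozen variables are distinct, $\chi$ is non-principal on the range of $\mathfrak{d}$, which is a box of dimensions governed by $C\asymp X/B$ and is short relative to the modulus; Conjecture $C_{12}$ then yields a power saving, while the diagonal contributes $O\!\left(BC(\log X)^{A}\right)$. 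Balancing these, and running the symmetric argument with $\mathfrak{b}$ and $\mathfrak{c}$ interchanged to cover the full range of $B$, gives $\sum_{\mathfrak{b}}\sum_{\mathfrak{c}}\beta_{\mathfrak{b}}\gamma_{\mathfrak{c}}s_{\mathfrak{b}\mathfrak{c}}\ll X^{1-\delta}$ on a non-trivial window of $B$; the exponent $12$ is precisely what this balancing, together with the degree $[F(\zeta_3):\Q]=8$, forces. The Type I sums are milder: one variable is so short that, after reciprocity, the sum over the long variable is a complete or almost complete cubic character sum over a box in $F(\zeta_3)$, bounded with room to spare by the P\'olya--Vinogradov inequality for Hecke characters (or by Conjecture $C_n$ with a small $n$). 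Reassembling the Type I and Type II bounds through \cite[Proposition 5.2]{FIMR}, with all errors kept uniform over the ray-class characters introduced above, proves the main estimate and its abelian Chebotarev variant.
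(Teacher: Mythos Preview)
Your high-level scaffolding (Vinogradov sieve, twisted multiplicativity, cubic reciprocity, detecting Chebotarev conditions by characters) is in line with the paper, but you have the roles of Type I and Type II inverted, and this matters.

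In the paper, as in \cite{FIMR}, the bilinear (Type II) sums are handled \emph{unconditionally}: after the multiplicativity and reciprocity manipulations you describe, the inner kernel becomes $\gamma(\alpha,\beta)=\bigl(\tfrac{\sigma(\beta)}{\alpha}\bigr)_{3,K'}$, and one invokes a general bilinear estimate (here \cite[Prop.~4.3]{Koymans_Rome_2024}) to get a power saving with no short-sum hypothesis. Conjecture $C_{12}$ plays no role there. Conversely, the linear (Type I) sums are the genuine obstacle and the place where $C_{12}$ enters. After fixing a generator $\alpha=a+\beta$ in a fundamental domain ($a\in\Z$, $\beta$ in a rank-$7$ complement), the $a$-sum has length $\ll X^{1/8}$ while the relevant cubic Dirichlet modulus is $q\ll X^{3/2}$; this is why one needs $C_n$ with $n=12$, not P\'olya--Vinogradov, and a field-lowering step (trivializing the $\overline{\sigma}$-piece of the symbol) is required even to get down from $C_{24}$ to $C_{12}$.

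There is a second gap you have not seen. In the Type I analysis one must control the contribution from $\beta$ for which the squarefull part of $N_{E/\Q}(\sigma(\beta)-\beta)$ is large (otherwise the ``$g_0$'' factor swamps the saving from $C_{12}$). Existing lattice-point results such as \cite[Lemma 3.1]{Koymans} fail here because the relevant lattice has rank exactly half the degree of the ambient field; the paper develops a new counting argument (effective Hilbert irreducibility plus the subspace theorem) to obtain a power saving in this borderline regime. Without this ingredient your Type I bound does not close.
\qed
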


\noindent
Our work is the first application of the spin technique to an extension that is not Galois over $\mathbb{Q}$, and this setting causes new difficulties throughout the paper. We define the spin symbol over $F(\zeta_3)$ which is a degree $8$ extension of $\mathbb{Q}$ that has only one non-trivial automorphism, and, based on previous papers on spin, it is not clear how the spin symbol should be defined in this context. In Section \ref{spin_symbol_section}, we explain why we are forced to work over $F(\zeta_3)$ rather than its Galois closure $K(\zeta_3)$. Another challenge is that a certain lattice point counting argument first introduced in \cite{FIMR} and later improved in \cite{Koymans} breaks down. In Section \ref{counting_ideals}, we use new ideas to further improve this argument, and the results of that section can be of independent interest.\\

\noindent 
From our main results, we deduce a corollary that is in the same spirit as the initial application of spin to Selmer groups of elliptic curves \cite[Theorem 10.1]{FIMR}.
For a finite set of places $S$, let $G_S$ be the Galois group of the maximal extension of $\mathbb{Q}$ unramified outside $S$. Let $\mathrm{Ad}^0(\overline{\rho})$ be the adjoint representation of $\overline{\rho}$. In Section \ref{proofs_for_even_representations}, we define the Selmer group $H^1_{\mathcal{N}}(G_S,\mathrm{Ad}^0(\overline{\rho}))$, and we have the following result.
\begin{corollary} \label{selmerdensity} Let $S= \{3,349\}$. Then we have
    \begin{align*}
        \dim H^1_{\mathcal{N}}(G_{S \cup \{ p\} }, \operatorname{Ad}^0(\overline{\rho}) )
        = 
        \begin{cases}
           \dim H^1_{\mathcal{N}}(G_{S}, \operatorname{Ad}^0(\overline{\rho}) ) + 1 & \textrm{if $s_{\mathfrak{p}} = 1$, } \\
             \dim H^1_{\mathcal{N}}(G_{S }, \operatorname{Ad}^0(\overline{\rho}) ) & \textrm{if $s_{\mathfrak{p}} \neq 1$.}  
        \end{cases}
    \end{align*}
Assuming Conjecture $C_{12}$, the Selmer-rank increases by $1$ one-third of the time and remains the same two-thirds of the time.
\end{corollary}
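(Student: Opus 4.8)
The plan is to read the corollary off from Theorems~\ref{main_theorem} and~\ref{spin_symbol_theorem} together with the Selmer-theoretic description of level-raising developed in Section~\ref{proofs_for_even_representations} following \cite{even2}; throughout write $M:=\operatorname{Ad}^0(\overline{\rho})$. Recall from that section the Selmer system $\mathcal{N}$: at a prime $p\in\mathcal{C}$ adjoined to the level its local component is the one-dimensional prescribed deformation condition $\mathcal{N}_p\subset H^1(G_p,M)$ of \cite{even2}. I would use the Selmer-theoretic form of the level-raising criterion supplied by that setup: adjoining $p$ to the ramification set $S$ leaves $\dim H^1_{\mathcal{N}^\ast}(G_{S\cup\{p\}},M(1))$ equal to $\dim H^1_{\mathcal{N}^\ast}(G_{S},M(1))$ when $p$ raises the level of $\rho$, and increases it by one otherwise.

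First I would apply the Greenberg--Wiles (Wiles' Euler-characteristic) formula to $\mathcal{N}$ over $G_S$ and over $G_{S\cup\{p\}}$. Since the local conditions at $3$, at $349$ and at $\infty$ and the module $M$ do not change, and the global $H^0$-terms in the formula depend only on $G_{\mathbb{Q}}$, the sole new ingredient is the local term $\dim_{\mathbb{F}_3}\mathcal{N}_p-\dim_{\mathbb{F}_3}H^0(G_p,M)$ at $p$. Now $p\equiv 1\bmod 3$, so $M$ is self-dual as a $G_p$-module via the trace form on $\mathfrak{sl}_2$, which is non-degenerate in characteristic $3$; and $\overline{\rho}(\operatorname{Frob}_p)$ has order $3$, hence is a non-trivial unipotent in $\mathrm{SL}_2(\mathbb{F}_3)$, so that $H^0(G_p,M)$ --- the centralizer of $\overline{\rho}(\operatorname{Frob}_p)$ in $\mathfrak{sl}_2$, as $\overline{\rho}$ is unramified at $p$ --- is one-dimensional. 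Thus $\dim H^0(G_p,M)=1=\dim\mathcal{N}_p$ and the new local term vanishes, so the formula gives
\begin{equation*}
\dim H^1_{\mathcal{N}}(G_{S\cup\{p\}},M)-\dim H^1_{\mathcal{N}}(G_{S},M)=\dim H^1_{\mathcal{N}^\ast}(G_{S\cup\{p\}},M(1))-\dim H^1_{\mathcal{N}^\ast}(G_{S},M(1)).
\end{equation*}
By the level-raising criterion the right-hand side equals $0$ when $p$ raises the level of $\rho$ and $1$ when it does not, and by Theorem~\ref{spin_symbol_theorem} the condition ``$p$ raises the level of $\rho$'' is equivalent to ``$s_{\mathfrak{p}}\neq 1$'' for every $p\in\mathcal{C}\setminus\mathcal{C}_0$ coprime to $\mathbf{m}$. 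Substituting yields the case distinction in the statement.

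For the density assertion, Theorem~\ref{spin_symbol_theorem} shows that $\mathcal{C}_0$ together with the finitely many primes dividing $\mathbf{m}$ forms a density-zero subset of $\mathcal{C}$, which may be discarded; on the remaining primes $s_{\mathfrak{p}}=1$ is equivalent to $p$ not raising the level of $\rho$. By Theorem~\ref{main_theorem} the level-raising primes have density $2/3$ in $\mathcal{C}$, so the primes of $\mathcal{C}$ with $s_{\mathfrak{p}}=1$ have density $1/3$ and those with $s_{\mathfrak{p}}\neq 1$ have density $2/3$; by the case distinction just established, $\dim H^1_{\mathcal{N}}(G_{S\cup\{p\}},M)$ exceeds $\dim H^1_{\mathcal{N}}(G_{S},M)$ by one on the first set and equals it on the second, which is the assertion. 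I expect no genuinely new difficulty here: essentially all the substance lies in Theorems~\ref{main_theorem} and~\ref{spin_symbol_theorem}, and the only points to check are the vanishing of the local Euler-characteristic term at $p$ --- which rests precisely on the two defining properties of $\mathcal{C}$, that $p\equiv 1\bmod 3$ and that $\overline{\rho}(\operatorname{Frob}_p)$ has order $3$ --- and the compatibility of this Selmer bookkeeping with the exact form of the level-raising criterion recorded in Section~\ref{proofs_for_even_representations}.
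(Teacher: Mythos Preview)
Your proposal is correct and follows essentially the same route as the paper: both arguments establish balancedness of the Selmer datum at $S\cup\{p\}$ via Wiles' formula (using precisely that $\dim\mathcal{N}_p=\dim H^0(G_p,M)$), then convert the level-raising criterion $f(p,K^{(p)}/\mathbb{Q})=9$ into the dichotomy for the Selmer rank, and finally invoke Theorems~\ref{spin_symbol_theorem} and~\ref{main_theorem} for the density statement. The only cosmetic difference is that the paper makes the second step explicit by exhibiting the unique global class $f^{(p)}\in H^1(G_{S\cup\{p\}},M)$ ramified at $p$ and unramified at $\ell$, and checking directly that $f^{(p)}|_{G_p}\in\mathcal{N}_p$ iff the Selmer rank jumps, whereas you package the same content as a statement about the dual Selmer rank.
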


\noindent
Increasing the ranks of Selmer groups by allowing ramification at just one additional prime is generally considered a difficult problem. Instead, it has become more common to relax the conditions and allow ramification at two primes \cite{KLR1, KLR2, FKP} so the above corollary is an unusually strong result.

\subsection{Acknowledgments}
We are grateful to Peter Koymans, Simon Kristensen, Paul Nelson,  and Ravi Ramakrishna for helpful conversations. 
This work is supported by grant VIL54509 from Villum Fonden.

\section{Notation}
In this section, we explain our notation for concepts related to number fields and class field theory. If $E$ is a number field, we write $\mathcal{O}_E$ for its ring of integers and $\mathcal{O}_E^{\times}$ for the unit group of $\mathcal{O}_E$. Suppose now that $M/E$ is a finite extension. If $\mathfrak{p}$ is non-zero prime ideal of $\mathcal{O}_E$, and $\mathfrak{P}$ is prime over $\mathfrak{p}$ in $M$, we write $f_{\mathfrak{P}/\mathfrak{p}}$ and $e_{\mathfrak{P}/\mathfrak{p}}$ for the inertial and ramification degree respectively. If $M/E$ is Galois, then $f_{\mathfrak{P}/\mathfrak{p}}$ and $e_{\mathfrak{P}/\mathfrak{p}}$ do not depend on the overlying prime $\mathfrak{P}$, and we write $f(\mathfrak{p},M/E)$ and $e(\mathfrak{p},M/E)$ instead. If $\mathfrak{a}$ is a non-zero fractional ideal of $M$, we write $N_{M/E}(\mathfrak{a})$ for its norm onto $E$.\\

\noindent
A modulus $\mathbf{m}$ of $E$ is by definition a pair $(\mathbf{m}_0 , \mathbf{m} _\infty)$ where $\mathbf{m}_0$ is a non-zero ideal of $\mathcal{O}_E $, and $\mathbf{m} _\infty$ is set of real embeddings of $E$. If $\mathbf{m} _\infty = \emptyset$ (e.g. if $E$ is totally complex), we use $\mathbf{m}$ and $\mathbf{m}_0$ interchangeably. When $\mathbf{m}$ and $\mathbf{m} '$ are moduli of $E$, we say that $\mathbf{m}$ divides $\mathbf{m} '$ if $\mathbf{m}_0 \mid  \mathbf{m}_0  '$ as ideals of $\mathcal{O}_E$, and $\mathbf{m} _\infty \subset \mathbf{m} _\infty '$. We use the following notation:
\begin{itemize}
\item $I_E(\mathbf{m})$ denotes the group of non-zero fractional ideals of $E$ coprime to $\mathbf{m}_0$.
\item $P_{\mathbf{m}}:= \left\{ (\alpha) \in I_E( \mathbf{m})\,:\, \alpha \equiv 1 \pmod{\mathbf{m}_0}\textrm{ and }\sigma(\alpha)>0\textrm{ for all }\sigma \in \mathbf{m} _\infty \right\}.$ 
\item $H_E(\mathbf{m}):= I_K(\mathbf{m})/P_{\mathbf{m}}$ denotes the ray class group of $\mathbf{m}$.
\item $E(\mathbf{m})$ denotes the ray class field of $\mathbf{m}$.
\end{itemize}
If $M$ is a finite abelian extension of $E$, we also use the following notation:
\begin{itemize}
\item $\mathfrak{f}(M/E)$ denotes the conductor of the extension $L/K$.
\item If $\mathbf{m}$ is a modulus divisible by all primes of $E$ that ramify in $E$, then $\Phi_{E/K,\mathbf{m} }: I_E(\mathbf{m}) \rightarrow \Gal(M/E)$ denotes the Artin map.
\end{itemize}
By Artin reciprocity, $\Phi_{M/E, \mathbf{m}}$ is surjective, and its kernel contains $P_{\mathbf{m}}$ if and only if $\mathfrak{f}(M/E)\mid \mathbf{m}$.

\section{The cubic residue symbol}
Before defining the spin symbol, we recall the definition of the cubic residue symbol. Suppose $E$ is a number field containing $\zeta_3$, a primitive $3$\textsuperscript{rd} root of unity. Let $\mathfrak{p}$ be a prime ideal of $E$ not containing $3$. If $\alpha \in \mathcal{O}_E$, we define the cubic residue symbol $\big( \frac{\alpha}{\mathfrak{p}}\big)_{3, E}$ as the unique element of $\left\{ 1, \zeta_3 , \zeta_3^2 , 0 \right\}$ satisfying
\begin{equation}
\left( \frac{\alpha}{\mathfrak{p}} \right)_{3, E} \equiv \alpha^{\frac{N(\mathfrak{p})-1}{3}} \pmod{\mathfrak{p}}
\end{equation}
where $N_{E/\mathbb{Q}}(\mathfrak{p}):= \# \mathcal{O}_E / \mathfrak{p}$ is the absolute norm of $\mathfrak{p}$. If $\mathfrak{a}$ is a non-zero integral ideal of $E$ not containing $3$ that factors into prime ideals as $\prod_{i=1}^r \mathfrak{p}_i^{a_i}$, we define
\begin{equation*}
\left( \frac{\alpha}{\mathfrak{a}} \right)_{3,E}:= \prod_{i=1}^r \left( \frac{\alpha}{\mathfrak{p}_i} \right)_{3,E}^{a_i}.
\end{equation*}
Clearly, this expression only depends on the residue class of $\alpha$ modulo $\mathfrak{a}$. We will need the following version of cubic reciprocity:

\begin{proposition}\label{cubic_reciprocity}
Let $\alpha , \beta \in \mathcal{O}_E$ with $\beta$ coprime to $3$. Then $\big(\frac{\alpha}{\beta}\big)_{3,E}$ only depends on the residue class of $\beta$ modulo $27 \alpha$. If $\alpha$ is also coprime to $3$, we have
  \begin{equation*}
\left( \frac{\alpha}{\beta} \right)_{3,E}= \mu \left( \frac{\beta}{\alpha} \right)_{3,E}
\end{equation*}
for some $\mu \in \left\{ 1, \zeta_3 , \zeta_3^2 \right\}$ only depending on the values of $\alpha$ and $\beta$ modulo $27$. 
\end{proposition}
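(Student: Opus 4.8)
The plan is to derive both assertions from the cubic power reciprocity law over $E$, which reduces everything to one local computation at the primes above $3$. Recall the power reciprocity law (see, e.g., the treatment of the $n$-th power residue symbol in Neukirch's \emph{Algebraic Number Theory}, or in Lemmermeyer's \emph{Reciprocity Laws}): if $\alpha,\beta\in\mathcal{O}_E$ are coprime to each other and to $3$, then
\[
\left(\frac{\alpha}{\beta}\right)_{3,E}\left(\frac{\beta}{\alpha}\right)_{3,E}^{-1}=\prod_{v\mid 3\infty}(\alpha,\beta)_{v},
\]
where $(\cdot,\cdot)_{v}$ is the cubic Hilbert symbol at the place $v$ and the product runs over the archimedean places and the primes dividing $3$. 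Since $E\supseteq\mathbb{Q}(\zeta_3)$ is totally imaginary, the archimedean factors vanish, so $\mu:=\prod_{\mathfrak{p}\mid 3}(\alpha,\beta)_{\mathfrak{p}}\in\{1,\zeta_3,\zeta_3^2\}$ and $\left(\frac{\alpha}{\beta}\right)_{3,E}=\mu\left(\frac{\beta}{\alpha}\right)_{3,E}$.

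The crux is the local lemma: for every prime $\mathfrak{p}\mid 3$ of $E$, writing $e=v_{\mathfrak{p}}(3)$, one has $1+27\,\mathcal{O}_{E_{\mathfrak{p}}}=1+\mathfrak{p}^{3e}\mathcal{O}_{E_{\mathfrak{p}}}\subseteq(E_{\mathfrak{p}}^{\times})^{3}$. I would prove it with the $3$-adic exponential, an isomorphism $\mathfrak{p}^{k}\mathcal{O}_{E_{\mathfrak{p}}}\xrightarrow{\ \sim\ }1+\mathfrak{p}^{k}\mathcal{O}_{E_{\mathfrak{p}}}$ once $k>e/(3-1)=e/2$: cubing corresponds to multiplication by $3$ on the additive side, so $(1+\mathfrak{p}^{k}\mathcal{O}_{E_{\mathfrak{p}}})^{3}=1+\mathfrak{p}^{k+e}\mathcal{O}_{E_{\mathfrak{p}}}$, and taking $k=\lfloor e/2\rfloor+1$ gives $1+\mathfrak{p}^{3e}\mathcal{O}_{E_{\mathfrak{p}}}\subseteq 1+\mathfrak{p}^{k+e}\mathcal{O}_{E_{\mathfrak{p}}}\subseteq(E_{\mathfrak{p}}^{\times})^{3}$ because $k+e\le\tfrac{3e}{2}+1\le 3e$ (here $e\ge 2$ since $\mathbb{Q}_3(\zeta_3)\subseteq E_{\mathfrak{p}}$). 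Granting this: when $\alpha,\beta$ are coprime to $3$, they are units at each $\mathfrak{p}\mid 3$, so by bilinearity of the Hilbert symbol and the vanishing of $(\,\cdot\,,\,\cdot\,)_{\mathfrak{p}}$ on cubes, each $(\alpha,\beta)_{\mathfrak{p}}$—and hence $\mu$—is unchanged when $\alpha$ or $\beta$ is multiplied by an element of $1+27\mathcal{O}_{E_{\mathfrak{p}}}$; via $(\mathcal{O}_E/27\mathcal{O}_E)^{\times}\cong\prod_{\mathfrak{p}\mid 3}(\mathcal{O}_{E_{\mathfrak{p}}}/27\mathcal{O}_{E_{\mathfrak{p}}})^{\times}$ this forces $\mu$ to depend only on $\alpha\bmod 27$ and $\beta\bmod 27$. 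If $\alpha$ and $\beta$ share a prime factor $\mathfrak{q}$ (necessarily $\mathfrak{q}\nmid 3$), then $\left(\frac{\alpha}{\beta}\right)_{3,E}=\left(\frac{\beta}{\alpha}\right)_{3,E}=0$ and $\mu=1$ works; this completes the reciprocity statement.

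For the first assertion, let $\beta\equiv\beta'\bmod 27\alpha$ with $\beta$, and hence $\beta'$, coprime to $3$. If $\gcd((\alpha),(\beta))\ne(1)$, then $\left(\frac{\alpha}{\beta}\right)_{3,E}=0$, and since $\beta'-\beta\in 27\alpha\mathcal{O}_E$ is divisible by every prime dividing both $(\alpha)$ and $(\beta)$, also $\left(\frac{\alpha}{\beta'}\right)_{3,E}=0$. Otherwise $\beta,\beta'$ are coprime to $27\alpha$, and I would invoke Artin reciprocity for the Kummer extension $E(\alpha^{1/3})/E$: it is unramified outside $3\alpha$, has conductor exponent $\le 1$ at primes dividing $\alpha$ but not $3$, and at each $\mathfrak{p}\mid 3$ its local norm group contains $(E_{\mathfrak{p}}^{\times})^{3}\supseteq 1+27\mathcal{O}_{E_{\mathfrak{p}}}$ by the lemma, so its conductor exponent there is at most $v_{\mathfrak{p}}(27)$; hence $\mathfrak{f}(E(\alpha^{1/3})/E)\mid 27\alpha$. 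Consequently the map $\mathfrak{b}\mapsto\left(\frac{\alpha}{\mathfrak{b}}\right)_{3,E}$—which on ideals coprime to $3\alpha$ is the Artin map $\Phi_{E(\alpha^{1/3})/E,\,27\alpha}$ followed by $\Gal(E(\alpha^{1/3})/E)\hookrightarrow\{1,\zeta_3,\zeta_3^2\}$, $\sigma\mapsto\sigma(\alpha^{1/3})/\alpha^{1/3}$—factors through the ray class group $H_E(27\alpha)$; since $(\beta)(\beta')^{-1}=(\beta/\beta')$ with $\beta/\beta'\equiv 1\bmod 27\alpha$, the classes of $(\beta)$ and $(\beta')$ coincide there, and we conclude $\left(\frac{\alpha}{\beta}\right)_{3,E}=\left(\frac{\alpha}{\beta'}\right)_{3,E}$.

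The main obstacle is the local lemma at the primes above $3$: although short, it is where the constant $27=3^3$ (rather than $9$) becomes necessary, since the $3$-adic exponential converges only on $\mathfrak{p}^{>e/2}$ and one must land inside $1+\mathfrak{p}^{k+e}$ with $k+e$ pushed up to $v_{\mathfrak{p}}(27)=3e$. The hypothesis $\zeta_3\in E$ is used separately—to make the symbol and the reciprocity law genuinely cubic and to force $E$ totally imaginary so the archimedean Hilbert symbols are trivial—while the remaining steps are routine class field theory.
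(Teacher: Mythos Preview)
Your proof is correct. Both you and the paper reduce everything to the same local fact at primes $\mathfrak{p}\mid 3$, namely $1+27\mathcal{O}_{E_{\mathfrak{p}}}\subseteq (E_{\mathfrak{p}}^{\times})^{3}$; you establish it via the $3$-adic exponential, whereas the paper simply invokes Hensel's lemma. The substantive difference is in how the first assertion is obtained. The paper does not separate the two claims: it expands $\left(\frac{\alpha}{\beta}\right)_{3,E}$ via the product formula for Hilbert symbols as $\prod_{\mathfrak{p}\mid 3\alpha}(\alpha,\beta)_{\mathfrak{p}}$, then reads off directly that the factors at $\mathfrak{p}\mid 3$ depend only on $\alpha,\beta\bmod 27$ and that the factors at $\mathfrak{p}\mid\alpha$, $\mathfrak{p}\nmid 3$, equal $\left(\frac{\beta}{\mathfrak{p}}\right)_{3,E}^{v_{\mathfrak{p}}(\alpha)}$ and hence depend only on $\beta\bmod\alpha$; both statements drop out of one computation. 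You instead recast the first assertion as a conductor bound $\mathfrak{f}(E(\alpha^{1/3})/E)\mid 27\alpha$ and invoke Artin reciprocity for the Kummer extension to conclude that $\left(\frac{\alpha}{\cdot}\right)_{3,E}$ factors through $H_E(27\alpha)$. Your route is slightly more conceptual and makes the role of the conductor explicit; the paper's route is more hands-on and yields the reciprocity factor $\mu=\prod_{\mathfrak{p}\mid 3}(\alpha,\beta)_{\mathfrak{p}}$ and the dependence on $\beta\bmod 27\alpha$ in one stroke, without needing to bound tame conductor exponents separately.
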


\noindent
The reader might have noticed that $(\frac{\alpha}{\beta})_{3,E}$ only depends on the ideal generated by $\beta$. On the other hand, the value of $\beta$ modulo $27\alpha$ can change if we multiply $\beta$ by a unit, but it is implicitly part of the statement of the proposition that it does not change in a way that affects the residue symbol. The proof uses the product formula for Hilbert symbols, and we are grateful to Peter Koymans for explaining it to us.

\begin{proof}
If $\alpha$ and $\beta$ are not coprime, then we can read it off from the residue class of $\beta$ modulo $27 \alpha$, and in this case the cubic residue symbol equals $0$. Hence we may assume that $\alpha$ and $\beta$ are coprime. To prove the proposition, we write the cubic residue symbol in terms of local Hilbert symbols. Suppose $v$ is a place of $E$ (finite or infinite), and let $E_v$ denote the completion of $E$ with respect to $v$. Let 
\begin{equation*}
\left( \frac{\cdot\,,\,\cdot}{v} \right): E_{v} \times E_{v} \rightarrow \left\{ 1, \zeta_3 , \zeta_3^2 \right\}
\end{equation*}
denote the cubic Hilbert symbol in $E_v$ (see \cite[Ch. VI, \S 8]{neukirch} for a definition). Since $E$ contains $\zeta_3$, all infinite places of $E$ are complex, and the corresponding Hilbert symbols are trivial (this fact is clear from the definition given in \cite{neukirch}). If $\mathfrak{p}$ is a finite place of $E$ not dividing $3$, the Hilbert symbol is related to cubic residue symbol via
\begin{equation*}
\left( \frac{\alpha}{\mathfrak{p}} \right)_{3,E} = \left( \frac{\pi_{\mathfrak{p}} , \alpha}{\mathfrak{p}} \right)
\end{equation*}
where $\pi_{\mathfrak{p}}$ is any uniformizer in $E_{\mathfrak{p}}$ \cite[p. 415]{neukirch}. Moreover, if $\mathfrak{p}\nmid 3$, and $u_1$ and $u_2$ are units in the ring of integers in $E_{\mathfrak{p}}$, then $( \frac{u_1 , u_2}{\mathfrak{p}})=1$. This fact follows from \cite[Ch. V, Proposition 3.2(iii), Lemma 3.3 and Corollary 1.2]{neukirch}. Combined with the product formula for the Hilbert symbols \cite[Ch. VI, Theorem 8.1]{neukirch}, we get
\begin{equation*}
\left( \frac{\alpha}{\beta} \right)_{3,E}= \prod_{\mathfrak{p} \mid \beta} \left( \frac{\beta , \alpha}{\mathfrak{p}} \right) = \prod_{\mathfrak{p} \mid 3\alpha} \left( \frac{\beta,\alpha}{\mathfrak{p}} \right)^{-1} = \prod_{\mathfrak{p} \mid 3 \alpha} \left( \frac{\alpha, \beta}{\mathfrak{p}} \right)
\end{equation*}
where we have used that $\beta$ is coprime to $3$ and $\alpha$, and that swapping the arguments inverts the Hilbert symbol. We can write the last expression as
\begin{equation}\label{hilbert_symbols}
\prod_{\mathfrak{p} \mid 3} \left( \frac{\alpha, \beta}{\mathfrak{p}} \right) \prod_{\substack{
\mathfrak{p} \mid \alpha \\\mathfrak{p} \nmid 3 
}} \left( \frac{\alpha , \beta}{\mathfrak{p}} \right).
\end{equation}
If $\mathfrak{p} \mid 3$, it follows by Hensel's lemma that any element in the ring of integers of $E_{\mathfrak{p}}$ that is $1$ modulo $27$ is a cube so the first factor only depends on $\alpha$ and $\beta$ modulo $27$. If $\mathfrak{p} \mid \alpha$, and $\mathfrak{p} \nmid 3$, let $\pi_{\mathfrak{p}}$ denote a uniformiser, and write $\alpha = u \pi_{\mathfrak{p}}$ for some $\mathfrak{p}$-adic unit $u$ and integer $n$. Then
\begin{equation*}
\left( \frac{\alpha , \beta}{\mathfrak{p}} \right)= \left( \frac{u, \beta}{\mathfrak{p}} \right) \left( \frac{\pi_{\mathfrak{p}},\beta }{\mathfrak{p}} \right)^n= \left( \frac{\beta}{\mathfrak{p}} \right)_{3,E}^n
\end{equation*}
where we have used that both $u$ and $\beta$ are $\mathfrak{p}$-adic units. The last expression only depends on $\beta$ modulo $\mathfrak{p}$ and hence only on $\beta$ modulo $\alpha$. This proves the first part of the proposition. For the second part, we assume that $\alpha$ is coprime to $3$. Then second factor in \eqref{hilbert_symbols} equals $(\frac{\beta}{\alpha})_{3,E}$. Hence
\begin{equation*}
\left( \frac{\alpha}{\beta} \right)_{3,E} = \prod_{\mathfrak{p} \mid 3} \left( \frac{\alpha, \beta}{\mathfrak{p}} \right) \left( \frac{\beta}{\alpha} \right)_{3,E},
\end{equation*}
and we have already explained why the product over the places dividing $3$ only depends on $\alpha$ and $\beta$ modulo $27$ so the proof is complete.
\end{proof}

\noindent
We also need the following lemma which explains how to pass between cubic residue symbols in a Galois extension. It will allow us to do computations with the spin symbol in the Galois closure $K(\zeta_3)$ of $F(\zeta_3)$. The lemma can readily be generalized to any power-residue symbol. 

\begin{lemma}\label{alternative}
Let $\mathbb{Q}(\zeta_3)\subset E_0\subset E$ be number fields such that $E/E_0$ is a Galois extension. Suppose $\mathfrak{a}$ is an ideal of $\mathcal{O}_{E_0}$ coprime to $3 \Delta(E / E_0)$, and $\beta \in \mathcal{O}_{E}$. Then
  \begin{equation*}
\left( \frac{N_{E/E_0}(\beta)}{\mathfrak{a}} \right)_{3,E_0} = \left( \frac{ \beta}{\mathfrak{a} \mathcal{O}_{E}} \right)_{3, E}.
\end{equation*}
\end{lemma}

\begin{proof}
It is enough to consider the case when $\mathfrak{a} = \mathfrak{p}$ is a prime ideal of $\mathcal{O}_{E_0}$. Let $G:= \mathrm{Gal}(E / E_0)$. Fix a prime ideal $\mathfrak{P}$ of $\mathcal{O}_{E}$ lying over $\mathfrak{p}$, and let $D_{\mathfrak{P}/\mathfrak{p} } \leq G$ denote the corresponding decomposition group so that $\mathfrak{p} \mathcal{O}_{E} = \prod_{ \sigma \in G/D_{\mathfrak{P}/\mathfrak{p} }}\sigma(\mathfrak{P})$. Since $G$ fixes $\mathbb{Q}(\zeta_3)$, we have 
\begin{equation*}
\left( \frac{\beta}{\mathfrak{p} \mathcal{O}_{E}} \right)_{3,E} = \prod_{\sigma \in G/ D_{\mathfrak{P}/\mathfrak{p} }} \left( \frac{\sigma^{-1} (\beta)}{\mathfrak{P} } \right)_{3,E} \equiv \prod_{\sigma \in G/ D_{\mathfrak{P}/\mathfrak{p}}} \sigma^{-1}(\beta)^{\frac{N_{E/\mathbb{Q}}(\mathfrak{P})-1}{3}} \pmod{\mathfrak{P}}.
\end{equation*}
Since $\mathfrak{p}$ does not divide $\Delta(E/ E_0)$, $\mathfrak{p}$ is unramified in $E $ so $D_{\mathfrak{P}/\mathfrak{p} }$ is cyclic and generated by a Frobenius element $\tau$. If $q := N_{E_0 / \mathbb{Q}}(\mathfrak{p})$, we have $N(\mathfrak{P} )=q^{f_{\mathfrak{P}/ \mathfrak{p}  }}$, and 
\begin{equation*}
\begin{split}
  \prod_{\sigma \in G / D_{\mathfrak{P}/ \mathfrak{p} }} \sigma^{-1}(\beta)^{\frac{N_{E / \mathbb{Q}}(\mathfrak{P})-1}{3}} & = \left[ \prod_{\sigma \in G/ D_{\mathfrak{P}/\mathfrak{p}}} \prod_{i=0}^{f_{\mathfrak{P}/\mathfrak{p} }-1} \sigma^{-1}(\beta)^{q^i} \right]^{\frac{q-1}{3}}\\
& \equiv \left[ \prod_{\sigma \in G/ D_{\mathfrak{P}/ \mathfrak{p} }} \prod_{i=0}^{f_{\mathfrak{P}/ \mathfrak{p} }-1} \tau^i \sigma^{-1}(\beta) \right]^{\frac{q-1}{3}} \pmod{\mathfrak{P}}\\
  &= N_{E/  E_0}(\beta)^{\frac{q-1}{3}} 
\end{split}
\end{equation*}
since when $\sigma$ traverses a set of representatives for $G/D_{\mathfrak{P}/\mathfrak{p} }$, $\sigma^{-1}$ traverses a set of representatives for $D_{\mathfrak{P}/\mathfrak{p}}\backslash G$. It follows that
\begin{equation*}
\left( \frac{\beta}{\mathfrak{p} \mathcal{O}_{E}} \right)_{3,E} \equiv  \left( \frac{N_{E/ E_0}(\beta)}{\mathfrak{p}} \right)_{3,E_0} \pmod{\mathfrak{P} },
\end{equation*}
and since both sides are valued in $\left\{ 1,\zeta_3 ,\zeta_3^2,0 \right\}$, and $3 \notin \mathfrak{P} $, they must be equal.
\end{proof}

\section{The spin symbol}\label{spin_symbol_section}
In this section, we elaborate on the construction of the spin symbol defined in Theorem \ref{spin_symbol_theorem} and explain why it captures the level-raising condition. Recall from Section \ref{introduction} that $K$ is a totally real $A_4$-extension only ramified at $349$. It can be realized as the splitting field of the quartic polynomial
\begin{equation*}
n(x):=x^4-x^3-10x^2+3x+20
\end{equation*}
of discriminant $349^2$ \cite[p. 567]{even1}. Moreover, $F$ denotes a quartic subfield of $K$, or equivalently the field obtained by adjoining a single root of $n(x)$ to $\mathbb{Q}$. We defined $\mathcal{C}$ as set of rational primes that are $1$ modulo $3$, are unramified in $K$ and have inertial degree $3$ in $K$. For $p\in \mathcal{C}$, $K^{(p)}$ denotes the maximal $3$-elementary extension of $K$ unramified outside $3p$, and for all $p\in\mathcal{C}$ outside a set of density zero, level-raising is equivalent to $f(p,K^{(p)}/\mathbb{Q})=9$.\\

\noindent
Our spin symbol will be defined over $F(\zeta_3)$ which is not a Galois extension of $\mathbb{Q}$. As mentioned previously, this causes many new challenges, and we now explain why we are forced to work over $F(\zeta_3)$ rather than its Galois closure $K(\zeta_3)$. The extension $K^{(p)}/K$ is $3$-elementary so we must work with a cubic spin symbol defined over a field containing $\zeta_3$. The natural choice is therefore $K(\zeta_3)$, but this causes a major problem: All primes of $\mathcal{C}$ have degree $3$ in $K(\zeta_3)$ so in order to get an estimate as in Theorem \ref{spin_main_theorem}, we must find cancellation in a sum over degree $3$ prime ideals. Given $X$, the number of prime ideals of degree $3$ over $\mathbb{Q}$ and norm at most $X$ is bounded by a constant times $X^{1/3}$ so this would be a hopeless task, even if we assume GRH because this only predicts an error term of size $X^{1/2}\log X$ in the prime number theorem for number fields.\\

\noindent
To circumvent this problem, we use the observation from \cite{even2} that the condition $f(p, K^{(p)}/\mathbb{Q})=9$ can lowered to the quartic subfield $F$. This is obtained by adjoining a single root of $n(x)$ to $\mathbb{Q}$. If $p \in \mathcal{C}$, then $p$ is unramified in $K$ and has inertial degree $3$ in $K$. Hence, any Frobenius element over $p$ in $\Gal(K/\mathbb{Q}) \simeq A_4$ has order $3$ and must act on the roots of $n(x)$ as a $3$-cycle. It follows that $p$ factors in $F$ as $\mathfrak{p}_1 \mathfrak{p}_2$ where $f_{\mathfrak{p}_1/p}=3$, and $f_{\mathfrak{p}_2/p}=1$. Moreover, the factorization of $n(x)$ modulo $3$ is $(x^3 + x^2 + x + 2)(x+1)$ so there is a similar factorization of $3$ in $F$ as $ 3_1 3_2  $ where $f_{3_1/3}=3$, and $f_{3_2 / 3}=1$. We then have the following result:

\begin{proposition}\label{field_lowering}
Let $p \in \mathcal{C}$, and let $F^{(\mathfrak{p}_2)}$ denote maximal abelian $3$-elementary extension of $F$ unramified away from $3_1$ and $\mathfrak{p}_2$. Then $\Gal(F^{(\mathfrak{p}_2)}/F)\simeq \mathbb{Z} / 3 \mathbb{Z}$, and $f(p , K^{(p)}/\mathbb{Q})=9$ if and only if $f(\mathfrak{p}_1 , F^{(\mathfrak{p}_2)}/F)=3$. 
\end{proposition}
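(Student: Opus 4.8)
\noindent The plan is to determine both Galois groups by class field theory and then compare Frobenius elements in them. Write $\bar D:=\Gal(K/F)$, a Sylow $3$-subgroup of $A_4$; from the factorisations recorded above one reads off that $\bar D$ is the decomposition group in $A_4$ of the unique prime $\mathfrak{Q}_0$ of $K$ over $\mathfrak{p}_2$ (this prime has residue degree $3$ over $\Q$, so it is inert in $K/F$ and therefore fixed by $\bar D$), whereas the three primes of $K$ over $\mathfrak{p}_1$ are permuted regularly by $\bar D$ and carry the other three Sylow $3$-subgroups as decomposition groups. As for $\Gal(F^{(\mathfrak{p}_2)}/F)$: this group is the maximal exponent-$3$ quotient of a ray class group of $F$ with modulus supported on $\{3_1,\mathfrak{p}_2\}$, and I would compute its $\F_3$-dimension from the exact sequence relating that ray class group to $\mathcal{O}_F^{\times}$ and to the local unit groups at $3_1$ and $\mathfrak{p}_2$. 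Here $F_{3_1}$ is the unramified cubic extension of $\Q_3$, whose principal units span a three-dimensional $\F_3$-space modulo cubes and which contains no primitive cube root of unity, while $F_{\mathfrak{p}_2}=\Q_p$ with $p\equiv1\bmod3$ contributes one tame dimension. Using that $3\nmid h_F$ and that the fundamental units of $F$ are independent modulo cubes in $\mathcal{O}_{F_{3_1}}^{\times}$ — both of which can be checked directly for Ramakrishna's field, cf.\ \cite{even1} — the unit contribution cancels all but one dimension, so $\Gal(F^{(\mathfrak{p}_2)}/F)\simeq\Z/3\Z$, and the same bookkeeping shows the resulting cyclic cubic extension is ramified at $\mathfrak{p}_2$; it is unramified at $\mathfrak{p}_1$ by construction. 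It follows that $F^{(\mathfrak{p}_2)}\neq K$ (since $K/F$ is unramified at $\mathfrak{p}_2$), hence $F^{(\mathfrak{p}_2)}\cap K=F$ and $[F^{(\mathfrak{p}_2)}K:K]=3$; and $\bar D$ acts trivially on $\Gal(F^{(\mathfrak{p}_2)}/F)$ because $|\bar D|=3$ is coprime to $|\operatorname{Aut}(\Z/3\Z)|=2$.

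\noindent The second and harder step is to show that $V:=\Gal(K^{(p)}/K)$, as an $\F_3[A_4]$-module, sits in a short exact sequence
\[ 0\longrightarrow W\longrightarrow V\longrightarrow \Gal(\Q(\zeta_9)^{+}K/K)\longrightarrow 0, \]
in which $W=\bigoplus_{\mathfrak{Q}\mid p}I_{\mathfrak{Q}}$ is the direct sum of the tame inertia subgroups at the four primes over $p$, each a line, so that $W\cong\F_3[A_4/\bar D]$ as an $A_4$-module and the quotient is the trivial module $\Z/3\Z$. That each $I_{\mathfrak{Q}}$ is one-dimensional with trivial Frobenius action rests on $3\mid N\mathfrak{Q}-1=p^{3}-1$; the directness of the sum and the identification of the quotient would follow from a global Euler-characteristic count, together with $3\nmid h_K$ and the fact that the only $3$-elementary extension of $K$ unramified outside $3$ is $\Q(\zeta_9)^{+}K$ — again a checkable property of this particular field.

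\noindent Granting the above, here is the comparison. Since $F^{(\mathfrak{p}_2)}/F$ is abelian of exponent $3$ and unramified outside $\{3_1,\mathfrak{p}_2\}$, the extension $F^{(\mathfrak{p}_2)}K/K$ is abelian of exponent $3$ and unramified outside the primes over $3p$, so $F^{(\mathfrak{p}_2)}K\subseteq K^{(p)}$ and restriction yields a surjection $q\colon V\twoheadrightarrow\Gal(F^{(\mathfrak{p}_2)}K/K)\simeq\Gal(F^{(\mathfrak{p}_2)}/F)\simeq\Z/3\Z$; by the first step, $q$ maps $I_{\mathfrak{Q}_0}$ onto $\Z/3\Z$ (because $F^{(\mathfrak{p}_2)}/F$ is ramified at $\mathfrak{p}_2$ and $\mathfrak{Q}_0$ is inert over $\mathfrak{p}_2$ in $K/F$) and kills $I_{\mathfrak{Q}}$ for every $\mathfrak{Q}$ over $\mathfrak{p}_1$ (because $F^{(\mathfrak{p}_2)}/F$ is unramified at $\mathfrak{p}_1$). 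I would then argue that both sides of the claimed equivalence are governed by one scalar, as follows. Because $p$ is unramified in $K$ with residue degree $3$ and $K^{(p)}/\Q$ is Galois, $f(p,K^{(p)}/\Q)=3\,f(\mathfrak{Q},K^{(p)}/K)$ for every $\mathfrak{Q}\mid p$, so $f(p,K^{(p)}/\Q)=9$ precisely when $\operatorname{Frob}_{\mathfrak{Q}}\neq 0$ in $V/I_{\mathfrak{Q}}$. Next, $\operatorname{Frob}_{\mathfrak{Q}}$ vanishes in the quotient $\Gal(\Q(\zeta_9)^{+}K/K)$ — as $\zeta_9\in K_{\mathfrak{Q}}$, because $p$ has order dividing $3$ modulo $9$ — so $\operatorname{Frob}_{\mathfrak{Q}}$ lifts into $W$. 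Now $\bar D$ fixes $\mathfrak{Q}_0$ and acts trivially on $I_{\mathfrak{Q}_0}$, so the image of $\operatorname{Frob}_{\mathfrak{Q}_0}$ in $W$ is $\bar D$-invariant modulo $I_{\mathfrak{Q}_0}$; writing $W=\bigoplus_{\mathfrak{Q}\mid p}\F_3\,e_{\mathfrak{Q}}$ on a basis permuted by $A_4$, with $\bar D$ fixing $e_{\mathfrak{Q}_0}$ and cyclically permuting the three remaining $e_{\mathfrak{Q}}$, this forces $\operatorname{Frob}_{\mathfrak{Q}_0}\equiv a\sum_{\mathfrak{Q}\mid\mathfrak{p}_1}e_{\mathfrak{Q}}\pmod{I_{\mathfrak{Q}_0}}$ for a single $a\in\F_3$, whence $f(p,K^{(p)}/\Q)=9$ if and only if $a\neq 0$. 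Finally, for a prime $\mathfrak{Q}$ over $\mathfrak{p}_1$ choose $\sigma\in A_4$ with $\sigma\mathfrak{Q}_0=\mathfrak{Q}$; then $\operatorname{Frob}_{\mathfrak{Q}}=\sigma\cdot\operatorname{Frob}_{\mathfrak{Q}_0}$, and applying $q$ — which annihilates $I_{\mathfrak{Q}}$ and every $e_{\mathfrak{Q}'}$ with $\mathfrak{Q}'\mid\mathfrak{p}_1$ but sends $e_{\mathfrak{Q}_0}$ to a generator — gives $q(\operatorname{Frob}_{\mathfrak{Q}})=a\cdot q(e_{\mathfrak{Q}_0})$, which is nonzero exactly when $a\neq 0$. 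Since $q(\operatorname{Frob}_{\mathfrak{Q}})$ is $\operatorname{Frob}_{\mathfrak{p}_1}$ in $\Gal(F^{(\mathfrak{p}_2)}/F)$ (the residue degree of $\mathfrak{Q}$ over $\mathfrak{p}_1$ being $1$), this gives $f(\mathfrak{p}_1,F^{(\mathfrak{p}_2)}/F)=3\iff a\neq 0\iff f(p,K^{(p)}/\Q)=9$. (The implication $\Leftarrow$ can also be seen at once: if $f(\mathfrak{p}_1,F^{(\mathfrak{p}_2)}/F)=3$ then a prime of $F^{(\mathfrak{p}_2)}K\subseteq K^{(p)}$ has residue degree $\geq 9$ over $\Q$, and $f(p,K^{(p)}/\Q)\in\{3,9\}$ forces equality.)

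\noindent The main obstacle is the second step — controlling $\Gal(K^{(p)}/K)$, and behind it excluding \emph{exotic} $3$-elementary extensions of $K$ (and of $F$) unramified outside $3$. This is where the arithmetic of Ramakrishna's field genuinely enters, via the global Euler-characteristic formula and Poitou--Tate duality; once it is in hand, the Frobenius comparison above is essentially formal.
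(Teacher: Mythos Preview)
Your outline is sound, but it is worth noting that the paper does not prove this proposition at all: it simply cites Theorem~A, Lemma~1, and Proposition~4 of \cite{even2} and remarks that the one-sided implication there ``readily upgrades to a biimplication.'' So you are doing considerably more than the paper, and your approach is genuinely different in that you reconstruct the argument from class field theory rather than defer to the reference.

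The Frobenius comparison --- viewing $V=\Gal(K^{(p)}/K)$ as an $\F_3[A_4]$-module, locating $\operatorname{Frob}_{\mathfrak{Q}_0}$ in the permutation module $W\cong\F_3[A_4/\bar D]$ via $\bar D$-invariance, and then transporting by $\sigma$ to read off $\operatorname{Frob}_{\mathfrak{p}_1}$ through $q$ --- is correct and is a clean way to see both implications at once. Your observation that $p\equiv 1\bmod 3$ forces $p^3\equiv 1\bmod 9$, whence $\operatorname{Frob}_{\mathfrak{Q}}$ dies in $\Gal(\Q(\zeta_9)^+K/K)$, is the right mechanism for pushing $\operatorname{Frob}_{\mathfrak{Q}_0}$ into $W$.

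The steps you flag as needing input from the specific arithmetic of $F$ and $K$ --- that $3\nmid h_F$, that the fundamental units of $F$ fill out $(\mathcal{O}_{F_{3_1}}^\times)/(\text{cubes})$, that the maximal $3$-elementary extension of $K$ unramified outside $3$ is exactly $\Q(\zeta_9)^+K$, and that the inertia groups $I_{\mathfrak{Q}}$ are independent in $V$ --- are precisely the computations carried out in \cite{even2}; in particular the paper later records that $H_F(3_1^2)$ is trivial (a stronger fact than $3\nmid h_F$), which packages your unit calculation at $3_1$. So your honest accounting of what remains matches the actual content of the cited reference.

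What your route buys is transparency: one sees exactly which scalar $a\in\F_3$ controls both conditions and why the equivalence is symmetric, rather than having to revisit the proof in \cite{even2} and check that it upgrades. What the paper's route buys is brevity, at the cost of opacity.
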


\begin{proof}
The first claim follows from Theorem A and Lemma 1 in \cite{even2}. The implication $f(\mathfrak{p}_1 , F^{(\mathfrak{p}_2)}/F)=3 \Rightarrow f(p, K^{(p)}/\mathbb{Q})=9$ is Proposition 4 \cite{even2}, and the proof readily upgrades to a biimplication.
\end{proof}

\noindent
If $p \in \mathcal{C}$ factors as $\mathfrak{p}_1 \mathfrak{p}_2$ in $F$ as above, then $\mathfrak{p}_1$ and $\mathfrak{p}_2$ split completely in $F(\zeta_3)$ since $p \equiv 1 \pmod{3}$. Hence $p$ has two prime factors of degree $1$ in $F(\zeta_3)$ which are inert in $K(\zeta_3)$. Conversely, if $\mathfrak{p}$ is a prime ideal of $F(\zeta_3)$ of degree $1$ over $\mathbb{Q}$ and inert in $K(\zeta_3)$ then $\mathfrak{p}$ lies over a prime in $\mathcal{C}$. 
For certain integral ideals $\mathfrak{a}$ of $F(\zeta_3)$, we then define a spin symbol $s_{\mathfrak{a}}$ such that when $\mathfrak{p}$ has degree $1$ over $\mathbb{Q}$ and is inert in $K(\zeta_3)$, then $s_{\mathfrak{p}}\neq 1$ if and only if $f(p, K^{(p)}/\mathbb{Q})=9$ where $(p)=\mathfrak{p} \cap \mathbb{Q}$. Since $\mathfrak{p}$ has degree $1$ over $\mathbb{Q}$, there is now hope that Theorem \ref{main_theorem} can be proved.

To make the spin symbol $s_{\mathfrak{a}}$ well-defined, we only consider integral ideals $\mathfrak{a}$ in $\mathcal{O}_{F(\zeta_3)}$ such that all units $v \in \mathcal{O}_{F}^\times$ satisfy $(\frac{v}{\mathfrak{a}})_{3,F(\zeta_3)}=1$. Since $\mathcal{O}_{F}^\times$ is finitely generated, this will impose a finite number of congruence conditions on $\mathfrak{a}$, and we end up with a spin symbol defined on a subgroup $H_0$ of a certain ray class group $H_{F(\zeta_3)}(\mathbf{m})$ of $F(\zeta_3)$. We then verify that all prime ideals of interest to us lie in $H_0$ (possibly with a finite number of exceptions). After having defined the spin symbol, we use Artin reciprocity and other tools from class field theory to verify that it correctly encodes the level-raising condition.\\

\noindent
To define the ray class group $H_{F(\zeta_3)}(\mathbf{m})$ and the subgroup $H_0$, we introduce some notation. We abbreviate $F(\zeta_3)$ by $F'$ and $K(\zeta_3)$ by $K'$. We also introduce the following extension of $K'$:
\begin{equation*}
M := K'\left(\sqrt[3]{\mathcal{O}_K^{\times}}\right)
\end{equation*}
meaning that $M$ is the field obtained by adjoining the cube roots of a system of fundamental units in $K$. In the context of the tame Gras-Munnier theorem, this is known as the $3$-governing field of $K$.

To define $H_{F'}(\mathbf{m})$, we must specify the modulus $\mathbf{m}$. Let $v_1, v_2, v_3$ be a system of fundamental units for $\mathcal{O}_F^\times$, and set $F_i ' := F '(\sqrt[3]{v_i})$ for $i=1,2,3$. Since $v_1 , v_2 , v_3$ are fundamental units, these extensions are non-trival, and because $\zeta_3 \in F'$, they are cyclic of degree $3$. We now take $\mathbf{m}$ to be any modulus of $F'$ satisfying the following conditions:

\begin{enumerate}
\item $\mathbf{m}$ is divisible by all primes of $\mathbb{Q}$ that ramify in the governing field $M$;
\item $\mathbf{m}$ is divisible by $\mathfrak{f}(F_i '/F')$ for $i=1,2,3$;
\item $\mathbf{m}$ is divisible by $\mathfrak{f}(K'/F')$.
\end{enumerate}
Condition (1) implies that $\mathbf{m}$ is divisible by $3$ and by $\ell = 349$. The following lemma ensures that our spin symbol $s_{\mathfrak{a}}$ will be well-defined when $\mathfrak{a}$ lies in certain subgroup of $H_{F'}(\mathbf{m})$.

\begin{lemma}\label{units_are_cubes}
There is a subgroup $H_0$ of $H_{F'}(\mathbf{m})$ such that for $\mathfrak{a} \in H_0$, we have $\big(\frac{v}{\mathfrak{a}}\big)_{3, F'}=1$ for all $v \in \mathcal{O}_F^\times$.
\end{lemma}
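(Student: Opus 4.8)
The plan is to realize the condition $\big(\frac{v}{\mathfrak a}\big)_{3,F'}=1$ for all $v\in\mathcal O_F^\times$ as a condition on the image of $\mathfrak a$ under the Artin map of a suitable abelian extension, and then take $H_0$ to be the preimage of a subgroup under a quotient map of ray class groups. Concretely, fix the system of fundamental units $v_1,v_2,v_3$ (together with the root of unity $\zeta_3$, which is itself a cube in $F'$ since $9\nmid$ the order, so it contributes nothing) and recall $F_i'=F'(\sqrt[3]{v_i})$ is cyclic of degree $3$ over $F'$ with conductor dividing $\mathbf m$ by condition (2). First I would compactify: set $L:=F_1'F_2'F_3'$, the compositum, which is an abelian $3$-elementary extension of $F'$ with $\mathfrak f(L/F')\mid\mathbf m$ by multiplicativity of conductors. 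Then for any prime $\mathfrak p\nmid\mathbf m$, the power-residue symbol $\big(\frac{v_i}{\mathfrak p}\big)_{3,F'}$ equals $1$ precisely when $\mathfrak p$ splits in $F_i'$, i.e. when $\Phi_{F_i'/F',\mathbf m}(\mathfrak p)$ is trivial; this is the standard link between the cubic residue symbol and the splitting of Kummer extensions, valid because $\zeta_3\in F'$.

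The key steps, in order: (i) verify the reciprocity/splitting dictionary $\big(\frac{v_i}{\mathfrak a}\big)_{3,F'}=1\iff \Phi_{F_i'/F',\mathbf m}(\mathfrak a)=\mathrm{id}$ for $\mathfrak a\in I_{F'}(\mathbf m)$, extending multiplicatively from primes; (ii) since $\mathfrak f(L/F')\mid\mathbf m$, Artin reciprocity gives that $\Phi_{L/F',\mathbf m}$ factors through $H_{F'}(\mathbf m)$, yielding a surjection $\psi\colon H_{F'}(\mathbf m)\twoheadrightarrow\Gal(L/F')$; (iii) define $H_0:=\ker\psi$, a subgroup of $H_{F'}(\mathbf m)$; (iv) observe that for $\mathfrak a\in H_0$ the image of $\mathfrak a$ in each $\Gal(F_i'/F')$ is trivial, hence by step (i) $\big(\frac{v_i}{\mathfrak a}\big)_{3,F'}=1$ for $i=1,2,3$; (v) conclude for a general unit $v=\zeta_3^{a}v_1^{b_1}v_2^{b_2}v_3^{b_3}$ by multiplicativity of the symbol in the numerator, using $\big(\frac{\zeta_3}{\mathfrak a}\big)_{3,F'}=1$ because $\zeta_3$ is a cube in $F'$ (as $F'\supset\mathbb Q(\zeta_9)$? — no; rather because $\zeta_3=\zeta_9^3$ only if $\zeta_9\in F'$, which need not hold, so instead argue that $F'(\sqrt[3]{\zeta_3})=F'(\zeta_9)/F'$ is at most the cyclotomic extension and absorb its conductor into $\mathbf m$ as well, or simply note $\mathcal O_F^\times=\langle -1\rangle\times\langle v_1,v_2,v_3\rangle$ so $-1=(-1)^3$ is a genuine cube and $\zeta_3\notin F$). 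One must be slightly careful here: the lemma quantifies over $v\in\mathcal O_F^\times$, not $\mathcal O_{F'}^\times$, and $\mathcal O_F^\times$ has rank $3$ with torsion $\{\pm1\}$ since $F$ is totally real; as $-1$ is a cube, only $v_1,v_2,v_3$ impose conditions, so $L=F_1'F_2'F_3'$ genuinely suffices and step (iii) already captures everything.

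The main obstacle is step (i): establishing cleanly that the cubic residue symbol detects splitting in the Kummer extension, with the correct handling of the finitely many bad primes (those dividing $3$, those dividing $\Delta(F'/\mathbb Q)$, and those where some $v_i$ is not a unit — but $v_i$ are global units so this last set is empty). This is where one invokes that for $\mathfrak p\nmid 3v_i$ with $\mathfrak p$ of residue characteristic prime to $3$, $v_i$ is a cube mod $\mathfrak p$ iff $\mathfrak p$ splits completely in $F'(\sqrt[3]{v_i})$, which is the elementary theory of Kummer extensions over a field containing $\zeta_3$. The conductor divisibilities in conditions (2)–(3) are exactly what make the Artin map descend to the ray class group, so once step (i) is in hand the rest is formal. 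I would also remark that $H_0$ has index dividing $27$ in $H_{F'}(\mathbf m)$, which suffices for the density arguments later since $H_0$ is still a finite-index subgroup and the primes of $\mathcal C$ will be shown to land in it.
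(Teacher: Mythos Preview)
Your proposal is correct and follows essentially the same route as the paper: the paper also invokes the Kummer dictionary (phrased as a commutative diagram identifying $\big(\tfrac{v_i}{\bullet}\big)_{3,F'}$ with the Artin map $\Phi_{F_i'/F',\mathbf m}$), then defines $H_0$ as $\bigcap_{i=1}^3\ker\Phi_{F_i'/F',\mathbf m}$ modulo $P_{\mathbf m}$, which is exactly your $\ker\psi$ since $L=F_1'F_2'F_3'$. Your self-correction on the torsion of $\mathcal O_F^\times$ is right---$F$ is totally real, so the torsion is $\{\pm1\}$ and $-1$ is a cube---and this is why the paper only needs the three fundamental units.
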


\begin{proof}
It is enough to ensure that $\big(\frac{v_i}{\mathfrak{a}}\big)_{3,F'}=1$ for $i=1,2,3$ when $\mathfrak{a} \in H_0$. For each $i\in\{1,2,3\}$, we have a commutative diagram
\begin{equation}\label{diagram}
\begin{tikzcd}
	{I_{F'}(\mathbf{m})} && {\Gal(F_i'/F')} \\
	\\
	&& {\{1,\zeta_3,\zeta_3^2\}}
	\arrow["{\Phi_{F_i'/F',\mathbf{m}}}", from=1-1, to=1-3]
	\arrow["{\big(\frac{v _i}{\bullet}\big)_{3,F'}}"', from=1-1, to=3-3]
	\arrow["\wr", from=1-3, to=3-3]
\end{tikzcd}
\end{equation}
where the vertical map is the isomorphism $\sigma \mapsto \sigma (\sqrt[3]{v_i})/\sqrt[3]{v_i}$, see \cite[p. 166]{milne}. Thus if  $ \mathfrak{a} \in I_0 := \cap_{i=1}^3 \ker \Phi_{F_i'/F', \mathbf{m}}$, we have $\big(\frac{v}{ \mathfrak{a}}\big)_{3,F'}=1$ for all $v \in \mathcal{O}_F^\times$. Since $\mathbf{m}$ is divisible by $f(F_i '/F')$ for $i=1,2,3$, $P_{\mathbf{m}} \subset \ker \Phi_{F_i'/F', \mathbf{m}}$ for all $i$, and hence $P_{\mathbf{m}}\subset I_0$. We then take $H_0 := I_0 / P_{\mathbf{m}}$. 
\end{proof}

The next lemma shows that the $H_0$ contains all but finitely many of the prime ideals of interest to us. 

\begin{lemma}\label{units_are_cubes_modulo_primes}
Let $\mathfrak{p}$ be a prime ideal of $F'$ such that $\mathfrak{p}$ has degree $1$ over $\mathbb{Q}$, and $\mathfrak{p}$ is inert in $K(\zeta_3)$. If $\mathfrak{p} \nmid \mathbf{m}$, then $\mathfrak{p} \in H_0$. 
\end{lemma}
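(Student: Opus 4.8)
The plan is to translate the assertion $\mathfrak{p}\in H_0$ into a splitting condition and verify it with the Artin map, using the governing field $M$ to handle ramification and the fact that $K'/F'$ is unramified to control the relevant Frobenius.

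First I would recall from the proof of Lemma~\ref{units_are_cubes} that $H_0 = I_0/P_{\mathbf{m}}$ with $I_0 = \bigcap_{i=1}^{3}\ker\Phi_{F_i'/F',\mathbf{m}}$. Since $\mathfrak{p}\nmid\mathbf{m}$, proving $\mathfrak{p}\in H_0$ amounts to proving $\Phi_{F_i'/F',\mathbf{m}}(\mathfrak{p})=1$ for $i=1,2,3$, that is, that $\mathfrak{p}$ splits completely in each $F_i'=F'(\sqrt[3]{v_i})$; by the commutative diagram in that proof this is equivalent to $\big(\tfrac{v_i}{\mathfrak{p}}\big)_{3,F'}=1$ for each $i$. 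Next I would note that $v_i\in\mathcal{O}_F^{\times}\subseteq\mathcal{O}_K^{\times}$, so $\sqrt[3]{v_i}\in M$ and hence $F_i'\subseteq M$. Together with $\mathfrak{p}\nmid\mathbf{m}$ and the first defining condition on $\mathbf{m}$, this shows $\mathfrak{p}$ is unramified in $M$, in particular in $K'$ and in each $F_i'$, so the Artin symbols below are well-defined; this is of course compatible with $\mathfrak{p}$ being inert in $K'$.

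The heart of the matter is to pin down the class $\operatorname{Frob}(\mathfrak{p},M/\mathbb{Q})$. Because $\mathfrak{p}$ has degree $1$ over $\mathbb{Q}$ and is inert in $K'/F'$, a representative $\Phi$ lies in $\Gal(M/F')$, has image in $\Gal(K'/\mathbb{Q})\cong A_4\times\mathbb{Z}/2\mathbb{Z}$ equal to $(\gamma,1)$ with $\gamma$ a $3$-cycle generating $\Gal(K/F)$, and restricts to a generator of $\Gal(K'/F')$. At this point I would bring in two further ingredients: that each $v_i\in F$ is fixed by $\Gal(K'/F')$, so that $N_{K'/F'}(v_i)=v_i^{3}$ (which via Lemma~\ref{alternative} relates the cubic symbol over $F'$ to one over $K'$); and that $K/F$, hence $K'/F'$, is unramified everywhere (the inertia at $349$ in $A_4$ must be the order-$2$ subgroup it is, and it meets the order-$3$ group $\Gal(K/F)$ trivially). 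The latter forces $K'$ into the Hilbert class field of $F'$ and, combined with class field theory and the $\Gal(K'/\mathbb{Q})$-module structure of $\Gal(M/K')$ coming from Kummer theory, controls how $K'$ sits inside $M$ relative to $M_F':=F'(\sqrt[3]{v_1},\sqrt[3]{v_2},\sqrt[3]{v_3})$. From this I would conclude that $\Phi$ fixes $M_F'$, so $\Phi|_{F_i'}=\mathrm{id}$ and $\big(\tfrac{v_i}{\mathfrak{p}}\big)_{3,F'}=1$ for every $i$, giving $\mathfrak{p}\in H_0$.

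The step I expect to be the main obstacle is exactly this last conclusion: showing that a Frobenius of a degree-$1$, $K'$-inert prime must act trivially on $M_F'$. Passing only through $K'$ is not enough, since for the degree-$3$ inert prime $\mathfrak{P}=\mathfrak{p}\mathcal{O}_{K'}$ the Artin symbol in $\Gal(F_i'K'/K')$ is the cube of $\operatorname{Frob}(\mathfrak{p},F_i'/F')$ and hence trivial, so all information about the splitting of $\mathfrak{p}$ in $F_i'/F'$ is destroyed over $K'$; one genuinely has to work inside $M$ and use the unramified-extension structure of $K'/F'$ (and the specific arithmetic of $K$) to recover it. Proposition~\ref{cubic_reciprocity} and Lemma~\ref{alternative} would enter the computations, but the decisive input is careful bookkeeping of the decomposition group of $\mathfrak{p}$ as a cyclic subgroup of $\Gal(M/\mathbb{Q})$ projecting onto $\langle(\gamma,1)\rangle$, together with the class-field-theoretic description of $K'/F'$.
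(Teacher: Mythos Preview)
Your reduction in the first two paragraphs is correct and matches the paper: one must show $\big(\tfrac{v_i}{\mathfrak{p}}\big)_{3,F'}=1$, and it is legitimate to work inside the governing field $M$ since $\mathfrak{p}\nmid\mathbf{m}$.

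However, the argument breaks down at a crucial point. You claim that $K/F$ (hence $K'/F'$) is everywhere unramified, asserting that the inertia at $349$ in $A_4$ has order $2$. This is false: the paper records that $F$ has class number $1$, so $F$ admits no nontrivial unramified abelian extension; since $K/F$ is cyclic of degree $3$, it must be ramified (necessarily at a prime above $349$). Consequently the inertia subgroup at $349$ in $A_4$ has order $3$, not $2$, and $K'$ is \emph{not} contained in the Hilbert class field of $F'$. Your proposed mechanism for locating $K'$ inside $M$ relative to $M_{F'}$ therefore collapses, and with it the conclusion that the Frobenius $\Phi$ fixes $M_{F'}$. You yourself flag this step as the main obstacle, and indeed it is not overcome: even ignoring the unramifiedness error, the passage ``From this I would conclude that $\Phi$ fixes $M_{F'}$'' is a hope rather than an argument.

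The paper's proof takes a completely different and more elementary route. Rather than analysing the abstract Galois structure of $M/F'$, it drops down to $F$ and $\mathbb{Q}$: writing $v=v_1$ and letting $v_2,v_3,v_4$ be its $\Gal(K/\mathbb{Q})$-conjugates, the chosen Frobenius $\sigma_p$ fixes $v_1$ and cycles $v_2,v_3,v_4$. The key observation is that $v_1v_2v_3v_4=N_{F/\mathbb{Q}}(v)=\pm 1$, so picking any cube root $\sqrt[3]{v_2}\in M$ one sees that $\pm\big[\sqrt[3]{v_2}\,\sigma_p(\sqrt[3]{v_2})\,\sigma_p^2(\sqrt[3]{v_2})\big]^{-1}$ is a cube root of $v_1$ which is visibly $\sigma_p$-fixed (since $\sigma_p^3=1$). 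This explicitly exhibits a $\sigma_p$-fixed cube root of $v$, giving the required splitting without any appeal to class field theory for $K'/F'$.
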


\begin{proof}
By definition of $H_0$, we must show that $\mathfrak{p} \in \ker \Phi_{F_i '/F', \mathbf{m}}$ for $i =1,2,3$ which, by the diagram in (\ref{diagram}), is equivalent to all units $v \in \mathcal{O}_F^\times$ being a cube modulo $\mathfrak{p}$. Let $p$ be the prime of $\mathbb{Q}$ lying under $\mathfrak{p}$. Then $f(p, K(\zeta_3)/\mathbb{Q})=3$ which forces $p \equiv 1 \pmod{3}$, and $f(p,K/\mathbb{Q})=3$. Hence $p \mathcal{O}_F = \mathfrak{p}_1 \mathfrak{p}_2$ where $f_{\mathfrak{p}_1/p}=3$, and $f_{\mathfrak{p}_2 /p}=1$. Since $f_{\mathfrak{p} / \mathfrak{p}_2}=1$, $v \in \mathcal{O}_F^\times$ is a cube modulo $\mathfrak{p}$ if and only if $v$ is a cube modulo $\mathfrak{p}_2$. This is equivalent to $f(\mathfrak{p}_2 , F(\sqrt[3]{v})/F)=1$ (since $p \equiv 1 \pmod{3}$, it does not matter which cube root we choose).

We can assume that $v$ is not already a cube in $F$, and in particular, $v \notin \mathbb{Q}$. Since there are no intermediate fields strictly between $F$ and $\mathbb{Q}$ (as $A_4$ has no subgroup of index $2$), we have $F = \mathbb{Q}(v)$, and $F(\sqrt[3]{v})=\mathbb{Q}(\sqrt[3]{v})$. Let $v_1, v_2 , v_3 , v_4$ denote the Galois conjugates of $v$ enumerated such that $v_1 =v$. Since $p$ factors as the product of a degree $1$ and a degree 3 prime ideal in $\mathcal{O}_F$ and is unramified in $M$ (as $\mathfrak{p}\nmid \mathbf{m}$), we can choose a Frobenius element $\sigma_p \in \Gal(M/\mathbb{Q})$ over $p$ in the governing field $M$ such that $\sigma_p(v_1 )=v_1$, and $\sigma_p$ cyclically permutes $v_2 , v_3 , v_4$. The primes over $p$ in $\mathbb{Q}(\sqrt[3]{v})$ are in bijection with orbits of the Galois conjugates of $\sqrt[3]{v}$ under the action $\sigma_p$, and orbit sizes correspond to inertial degrees. We show that $\sigma_p$ pointwise fixes the three cube roots of $v$ in $M$. Then $p$ has three degree $1$ factors in $\mathbb{Q}(\sqrt[3]{v})$, and since $f_{\mathfrak{p}_1/p }=3$ these must lie over $\mathfrak{p}_2$, i.e. $f(\mathfrak{p}_2 , F(\sqrt[3]{v})/F)=1$ as desired.

To explain why this is the case, we fix a cube root $\sqrt[3]{v_2}$ of $v_2$. Then, in some order, $\sqrt[3]{v_2 }, \sigma_p (\sqrt[3]{v_2}), \sigma_p^2(\sqrt[3]{v_2})$ are cube roots of $v_2, v_3 , v_4$. Since $v_1$ is a unit in $\mathcal{O}_F$, $v_1 v_2 v_3 v_4 = N_{F / \mathbb{Q}}(v_1)= \pm 1$ so one sees that $\pm \left[ \sqrt[3]{v_2} \sigma_p(\sqrt[3]{v_2}) \sigma_p^2(\sqrt[3]{v_2}) \right]^{-1}$ is a cube root of $v_1$ which is fixed by $\sigma_p$, since $\sigma_p$ has order $3$. The remaining two cube roots of $v_1$ are also fixed by $\sigma_p$ because $\zeta_3$ is fixed by $\sigma_p$. This completes the proof. 
\end{proof}

Finally, we define the spin symbol. We will make use of the fact that $F$ has class number $1$ \cite[p. 578]{even1}. Fix an automorphism $\sigma \in \Gal(K/\mathbb{Q})-\Gal(K/F)$. 

\begin{definition}\label{the_spin_symbol}
Let $\mathfrak{a} \in H_0$. Then we define the spin symbol $s_{\mathfrak{a}}$ as
\begin{equation*}
s_{\mathfrak{a}}:= \Bigg(\frac{N_{K/F}(\sigma(\alpha))}{\mathfrak{a}}\Bigg)_{3,F'}
\end{equation*}
where $\alpha$ is any generator of $N_{F'/F}(\mathfrak{a})$. 
\end{definition}

\noindent
It is an easy consequence of Lemma \ref{units_are_cubes} that the spin symbol is independent of the choice of generator $\alpha$. By Lemma \ref{canonical} below, the definition is also independent of the choice of $\sigma$. We remark that the appearance of the norm $N_{K/F}$ is unusual for a spin symbol, but it is necessary because unless $\alpha\in\mathbb{Q}$, $\sigma(\alpha)$ lands outside of $F'$. Another reason is that the level-raising condition reduces to a cubic property of a degree $1$ prime ideal relative to a degree $3$ prime ideal, c.f. the proof of Proposition \ref{spin_captures_level_raising} below.

By Lemma \ref{alternative}, it follows that the spin symbol can be lifted to $K'$ by removing the norm $N _{K/F}$:
\begin{equation}\label{alternative_expression}
 s _{\mathfrak{a}} = \left(\frac{\sigma(\alpha)}{\mathfrak{a}\mathcal{O}_{K'}}\right)_{3,K' }.
\end{equation}

\noindent
Here we have used that $N _{K/F}(\sigma(\alpha))=N_{K'/F'}(\sigma(\alpha))$ for $\alpha \in K$. We will use this expression when proving Theorem \ref{main_theorem}.\\

\noindent
The following lemma shows that the spin symbol has a more canonical expression that is clearly independent of the choice of $\sigma$. We have chosen the above definition because it makes it easier to prove that the spin symbol has the right properties.

\begin{lemma}\label{canonical}
Let $\alpha \in F$ and $\sigma \in \Gal(K/\mathbb{Q})-\Gal(K/F)$. Then
\begin{equation*}
N_{K/F}(\sigma(\alpha))= \sigma_1(\alpha) \sigma_2(\alpha) \sigma_3(\alpha)
\end{equation*}
where $\sigma_1 , \sigma_2 , \sigma_3$ are the three double-transpositions in $\Gal(K/\mathbb{Q}) \cong A_4$.
\end{lemma}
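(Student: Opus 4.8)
The plan is to expand the left-hand side directly as a product over the Galois group and then reorganize it using the coset structure of $A_4$. Write $H := \Gal(K/F)$, a subgroup of order $3$ of $G := \Gal(K/\mathbb{Q}) \cong A_4$, and recall that the three double-transpositions together with the identity form the unique Klein four subgroup $V \trianglelefteq G$. By definition of the norm,
\[
N_{K/F}(\sigma(\alpha)) = \prod_{\tau \in H} \tau(\sigma(\alpha)) = \prod_{\tau \in H} (\tau\sigma)(\alpha).
\]
Since $\alpha \in F = K^{H}$, the value $g(\alpha)$ depends only on the left coset $gH \in G/H$; so the right-hand product is determined by the multiset of left cosets $\{\, \tau\sigma H : \tau \in H \,\}$, equivalently by how the right coset $H\sigma$ meets the left cosets of $H$.

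Next I would show that the three elements of $H\sigma$ lie in three \emph{distinct, nontrivial} left cosets of $H$. They avoid $H$ itself, since $\tau\sigma \in H$ would force $\sigma \in H$, contrary to hypothesis. If two of them, say $\tau_1\sigma$ and $\tau_2\sigma$, lay in the same left coset, then $\tau_2^{-1}\tau_1 \in H \cap \sigma H \sigma^{-1}$; this intersection is a subgroup of the order-$3$ group $H$, and if it equalled $H$ then $\sigma$ would normalize $H$, contradicting the fact that a Sylow $3$-subgroup of $A_4$ is self-normalizing (there are four of them, so $[A_4 : N_G(H)] = 4$ and hence $|N_G(H)| = 3$). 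Therefore $\tau_2^{-1}\tau_1 = e$ and the three cosets are distinct.

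Finally, $G/H$ has exactly four elements, and $V$ is a transversal for it because $|V| = 4$ is coprime to $|H| = 3$, so $V \cap H = \{e\}$; thus the four left cosets of $H$ are $H,\ \sigma_1 H,\ \sigma_2 H,\ \sigma_3 H$. Combining this with the previous paragraph gives $\{\, \tau\sigma H : \tau \in H \,\} = \{\sigma_1 H, \sigma_2 H, \sigma_3 H\}$, and since $\sigma_i(\alpha)$ depends only on $\sigma_i H$ we conclude
\[
N_{K/F}(\sigma(\alpha)) = \sigma_1(\alpha)\,\sigma_2(\alpha)\,\sigma_3(\alpha).
\]
The only genuine input is the group-theoretic fact that a Sylow $3$-subgroup of $A_4$ is self-normalizing, which forces $H\sigma$ to be spread across the three nontrivial left cosets; everything else is bookkeeping with cosets and the observation that $g(\alpha)$ factors through $G/H$ for $\alpha \in F$. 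I do not expect any real obstacle beyond keeping left and right cosets straight.
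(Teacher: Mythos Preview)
Your proof is correct. Both your argument and the paper's rest on the semidirect product structure $A_4 \cong V \rtimes C_3$, but you organize it differently. The paper works modulo $V$: it shows that $\sigma, \sigma\tau, \sigma\tau^2$ represent the three cosets of $V$ in $A_4$, so exactly one of them, say $\sigma\tau^i$, lies in $V \setminus \{e\}$; the other two double transpositions are then its $\tau$-conjugates $\tau(\sigma\tau^i)\tau^{-1}$ and $\tau^2(\sigma\tau^i)\tau^{-2}$, and one concludes using $\tau(\alpha)=\alpha$. You instead work modulo $H$: you observe directly that $g(\alpha)$ depends only on the left coset $gH$, use the self-normalizing property of the Sylow $3$-subgroup to show that the right coset $H\sigma$ is spread across the three nontrivial left $H$-cosets, and identify those cosets via the transversal $V$. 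Your route is a bit more conceptual and would adapt readily to any group with a self-normalizing subgroup and a complementary transversal; the paper's is a touch more explicit about which element of $H\sigma$ is an actual double transposition. Either way the content is the same short piece of $A_4$ bookkeeping.
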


\begin{proof}
Let $\tau$ be a generator of $\Gal(K/F)\cong \mathbb{Z}/3 \mathbb{Z}$. Thus
\begin{equation*}
 N_{K/F }(\sigma(\alpha))= \sigma(\alpha) \tau \sigma(\alpha)\tau^2 \sigma(\alpha).
\end{equation*}
We claim that $\sigma \tau^i$ is a double transposition for some $i \in \left\{ 0,1,2 \right\}$. Indeed, let $V_4 \leq A_4$ be the subgroup generated by double transpositions. Then $\sigma, \sigma \tau, \sigma \tau^2$ must be distinct modulo $V_4$ because otherwise $\sigma \tau^{i-j} \sigma^{-1} \in V_4$ for some distinct $i,j \in \left\{ 0,1,2 \right\}$, but $\sigma \tau^{i-j} \sigma^{-1}$ is a $3$-cycle so that is impossible. Since $V_4$ has index $3$ in $A_4$, $\sigma \tau^i \in V_4$ for some $i$, and $\sigma \tau^i\neq 1$ because $\sigma \notin \Gal(K/F)$. The three double transpositions are now $\sigma \tau^i$, $\tau (\sigma \tau^{i}) \tau^{-1}$ and $\tau^2 (\sigma \tau^{i}) \tau^{-2}$. Since $\tau$ fixes $\alpha$, we have
\begin{equation*}
\sigma(\alpha) \tau \sigma(\alpha) \tau^2 \sigma(\alpha)=\sigma \tau^i(\alpha) \tau \sigma \tau^{i-1}(\alpha) \tau^2 \sigma \tau^{i-2}(\alpha)=\sigma_1(\alpha) \sigma_2(\alpha) \sigma_3(\alpha)
\end{equation*}
as desired. 
\end{proof}

We now explain why the spin symbol captures the level-raising condition.

\begin{proposition}\label{spin_captures_level_raising}
Let $\mathfrak{p}\nmid \mathbf{m}$ be a prime ideal of $F '$ of degree $1$ over $\mathbb{Q}$ and inert in $K'$. Suppose $\mathfrak{p}$ lies over the prime $p$ of $\mathbb{Q}$. Then $f(p, K^{(p)}/\mathbb{Q})=9$ if and only if $s_{\mathfrak{p}}\neq 1$.
\end{proposition}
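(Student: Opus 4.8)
The plan is to descend the level-raising condition to the quartic field $F$ via Proposition~\ref{field_lowering}, evaluate $s_{\mathfrak{p}}$ explicitly as a cubic residue against the degree-$1$ prime $\mathfrak{p}$, and match the two sides using class field theory over $F$. To begin, since $\mathfrak{p}$ has degree $1$ over $\mathbb{Q}$ and is inert in $K'$, the rational prime $p$ below $\mathfrak{p}$ lies in $\mathcal{C}$, one has $p\equiv 1\bmod 3$, and $p\mathcal{O}_F=\mathfrak{p}_1\mathfrak{p}_2$ with $f_{\mathfrak{p}_1/p}=3$ and $f_{\mathfrak{p}_2/p}=1$, where $\mathfrak{p}_2=\mathfrak{p}\cap\mathcal{O}_F$. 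By Proposition~\ref{field_lowering} it then suffices to prove that $f(\mathfrak{p}_1,F^{(\mathfrak{p}_2)}/F)=3$ if and only if $s_{\mathfrak{p}}\neq 1$.

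Next I would compute $s_{\mathfrak{p}}$. Since $\mathfrak{p}$ has residue degree $1$ over $\mathfrak{p}_2$, $N_{F'/F}(\mathfrak{p})=\mathfrak{p}_2$, which is principal as $\mathrm{Cl}(F)=1$; write $\mathfrak{p}_2=(\alpha)$. By Lemma~\ref{canonical}, $\gamma:=N_{K/F}(\sigma(\alpha))=\sigma_1(\alpha)\sigma_2(\alpha)\sigma_3(\alpha)$ with $\sigma_1,\sigma_2,\sigma_3$ the double transpositions; since $\{1,\sigma_1,\sigma_2,\sigma_3\}$ is a transversal of $\Gal(K/F)$ in $\Gal(K/\mathbb{Q})\cong A_4$, the elements $\alpha,\sigma_1(\alpha),\sigma_2(\alpha),\sigma_3(\alpha)$ are exactly the four roots of $n(x)$, so $\gamma=N_{F/\mathbb{Q}}(\alpha)/\alpha$ and $(\gamma)=(p)/\mathfrak{p}_2=\mathfrak{p}_1$. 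As $\mathfrak{p}$ is prime to $\gamma$ and has absolute norm $p$, the definition of the cubic residue symbol gives $s_{\mathfrak{p}}=\left(\frac{\gamma}{\mathfrak{p}}\right)_{3,F'}\equiv\gamma^{(p-1)/3}\pmod{\mathfrak{p}}$; via the canonical isomorphism $\mathcal{O}_F/\mathfrak{p}_2\xrightarrow{\ \sim\ }\mathcal{O}_{F'}/\mathfrak{p}\cong\mathbb{F}_p$ and $\gamma\in\mathcal{O}_F$, this says $s_{\mathfrak{p}}=1$ if and only if $\gamma$ is a cube modulo $\mathfrak{p}_2$. (That $s_{\mathfrak{p}}$ is independent of $\alpha$ and $\sigma$ follows from Lemma~\ref{units_are_cubes} and the same transversal computation, since replacing $\alpha$ by $u\alpha$ with $u\in\mathcal{O}_F^{\times}$ multiplies $\gamma$ by the unit $N_{F/\mathbb{Q}}(u)/u$.)

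The decisive step is a class field theory computation over $F$, whose point is that $F^{(\mathfrak{p}_2)}$ has conductor exactly $\mathfrak{p}_2$. Indeed, since $\mathfrak{p}\nmid\mathbf{m}$, Lemma~\ref{units_are_cubes_modulo_primes} gives $\mathfrak{p}\in H_0$, so by Lemma~\ref{units_are_cubes} every $v\in\mathcal{O}_F^{\times}$ satisfies $\left(\frac{v}{\mathfrak{p}}\right)_{3,F'}=1$, i.e.\ is a cube modulo $\mathfrak{p}$, hence modulo $\mathfrak{p}_2$. Combined with $\mathrm{Cl}(F)=1$ and $p\equiv 1\bmod 3$, the maximal $3$-elementary quotient of $H_F(\mathfrak{p}_2)\cong\mathbb{F}_p^{\times}/\overline{\mathcal{O}_F^{\times}}$ is $\mathbb{F}_p^{\times}/(\mathbb{F}_p^{\times})^{3}\cong\mathbb{Z}/3\mathbb{Z}$, so the maximal $3$-elementary abelian extension of $F$ of conductor dividing $\mathfrak{p}_2$ already has degree $3$; being contained in $F^{(\mathfrak{p}_2)}$, which has degree $3$ by Proposition~\ref{field_lowering}, it must equal $F^{(\mathfrak{p}_2)}$. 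Consequently $\Phi_{F^{(\mathfrak{p}_2)}/F}$ factors through the surjection $H_F(\mathfrak{p}_2)\twoheadrightarrow\mathbb{Z}/3\mathbb{Z}$ induced by reduction modulo cubes, so for the principal prime $\mathfrak{p}_1=(\gamma)$, which is prime to $\mathfrak{p}_2$, one has $\Phi_{F^{(\mathfrak{p}_2)}/F}(\mathfrak{p}_1)=1$ if and only if $\gamma$ is a cube modulo $\mathfrak{p}_2$. Since $\Gal(F^{(\mathfrak{p}_2)}/F)\cong\mathbb{Z}/3\mathbb{Z}$, this gives $f(\mathfrak{p}_1,F^{(\mathfrak{p}_2)}/F)=3$ precisely when $\gamma$ is not a cube modulo $\mathfrak{p}_2$, i.e.\ precisely when $s_{\mathfrak{p}}\neq 1$; combining with Proposition~\ref{field_lowering} completes the proof.

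I expect the conductor computation to be the main obstacle: establishing that $F^{(\mathfrak{p}_2)}$ is unramified at $3_1$ is exactly what allows the Artin symbol at $\mathfrak{p}_1$ to be read off from the degree-$1$ prime $\mathfrak{p}_2$ alone, with no local contribution from the primes above $3$. The norm $N_{K/F}$ in the definition of $s_{\mathfrak{p}}$ plays the role of converting a generator of the degree-$1$ prime $\mathfrak{p}_2$ into a generator $\gamma$ of the degree-$3$ prime $\mathfrak{p}_1$, which is why the level-raising condition emerges as a cubic relation between the two primes of $F$ above $p$.
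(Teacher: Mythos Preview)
Your proof is correct and follows the same overall plan as the paper: reduce via Proposition~\ref{field_lowering}, identify $\gamma:=N_{K/F}(\sigma(\alpha))$ as a generator of $\mathfrak{p}_1$, observe that $s_{\mathfrak{p}}=1$ precisely when $\gamma$ is a cube modulo $\mathfrak{p}_2$, and then match this with the Frobenius condition on $\mathfrak{p}_1$ in $F^{(\mathfrak{p}_2)}/F$ via class field theory. The difference is in this last step. The paper works with the modulus $3_1^2\mathfrak{p}_2$ and imports two facts from \cite{even2}: that $F^{(\mathfrak{p}_2)}/F$ has conductor $3_1^2\mathfrak{p}_2$, and that $H_F(3_1^2)$ is trivial (so units surject onto $(\mathcal{O}_F/3_1^2)^{\times}$, allowing one to adjust $\pi_1$ by a unit to make it a cube modulo $3_1^2$). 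Your route is more self-contained: since Lemma~\ref{units_are_cubes_modulo_primes} already shows every unit of $\mathcal{O}_F$ is a cube modulo $\mathfrak{p}_2$, the $3$-elementary quotient of $H_F(\mathfrak{p}_2)\cong\mathbb{F}_p^{\times}/\overline{\mathcal{O}_F^{\times}}$ is $\mathbb{Z}/3\mathbb{Z}$, yielding a degree-$3$ extension of conductor dividing $\mathfrak{p}_2$ that must coincide with $F^{(\mathfrak{p}_2)}$; the Artin symbol at $\mathfrak{p}_1$ is then read off directly from $\gamma\bmod\mathfrak{p}_2$ with no $3$-adic contribution. This bypasses the two external inputs entirely (and in fact shows the conductor divides $\mathfrak{p}_2$, which is stronger than what the paper's proof actually uses). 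One small slip: $\alpha,\sigma_1(\alpha),\sigma_2(\alpha),\sigma_3(\alpha)$ are the four $\mathbb{Q}$-conjugates of $\alpha$, not literally the roots of $n(x)$; but the transversal argument and the identity $\alpha\gamma=N_{F/\mathbb{Q}}(\alpha)=\pm p$, hence $(\gamma)=\mathfrak{p}_1$, are correct as stated.
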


\begin{proof}
We have $f(p, K /\mathbb{Q})=3$ so $p \mathcal{O}_F= \mathfrak{p}_1 \mathfrak{p}_2$ where $f_{\mathfrak{p}_1 /p}=3$ and $f_{\mathfrak{p}_2 /p}=1$. Moreover, $\mathfrak{p}_2 \mathcal{O}_{F'}= \mathfrak{p} \mathfrak{p} '$ for some $\mathfrak{p} '\neq \mathfrak{p}$ since $f_{\mathfrak{p} /p}=1$. Let $\mathfrak{p}_2 =(\pi_2)$ so that
\begin{equation*}
s_{\mathfrak{p}}= \Bigg(\frac{N_{K/F}(\sigma(\pi_2))}{\mathfrak{p}}\Bigg)_{3,F'}.
\end{equation*}
Since $\mathfrak{p}_2$ is inert in $K$, $\sigma(\pi_2) \mathcal{O}_K = \sigma(\mathfrak{p}_2 \mathcal{O}_K )$ is a prime of $K$ lying over $p$ in $\mathbb{Q}$. Because $f(p,K/\mathbb{Q})=3$, the decomposition group of $\mathfrak{p}_2 \mathcal{O}_K $ has size $3$ so it must equal $\Gal(K/F)$. We chose $\sigma \notin \Gal(K/F)$, so $\sigma(\mathfrak{p}_2 \mathcal{O}_K )$ must lie over $\mathfrak{p}_1$ in $F$. It has degree $1$ over $F$ since $\mathfrak{p}_1$ already has degree $3$ over $\mathbb{Q}$. Hence $\mathfrak{p}_1 = N_{K/F }(\sigma(\mathfrak{p}_2 \mathcal{O}_K))=(N_{K/F}(\sigma(\pi_2)))$. In other words, $\pi_1:=N_{K/F}(\sigma(\pi_2))$ is a generator for $\mathfrak{p}_1$. By Proposition \ref{field_lowering}, our task is now to show
\begin{equation*}
\left( \frac{\pi_1}{\mathfrak{p}} \right)_{3,F'}\neq 1 \Longleftrightarrow f(\mathfrak{p}_1, F^{(\mathfrak{p}_2)}/F)=3. 
\end{equation*}
Equivalently, we must show that $\pi_1$ is a cube modulo $\mathfrak{p}_2$ if and only if $f(\mathfrak{p}_1 , F^{(\mathfrak{p}_2)}/F)=1$. Our tools will be Artin reciprocity and the fundamental exact sequence of global class field theory, see \cite[Ch. V, Thm. 1.7]{milne}. We will use the following two facts \cite[p. 95, p. 105]{even2}:
\begin{enumerate}
\item The extension $F^{(\mathfrak{p}_2)}/F$ has conductor $3_1^2 \mathfrak{p}_2$.
\item The ray class group $H_{F}(3_1^2)$ is trivial.
\end{enumerate}
Applying the fundamental exact sequence to $F$ and the modulus $3_1^2 \mathfrak{p}_2$ gives a short exact sequence
\begin{equation} \label{ses}
1 \rightarrow \mathcal{O}_F^\times / (1+3_1^2 \mathfrak{p}_2)\cap \mathcal{O}_F^\times \rightarrow (\mathcal{O}_F / 3_1^2 \mathfrak{p}_2)^\times \rightarrow H(3_1^2 \mathfrak{p}_2) \rightarrow 1.
\end{equation}
Here we use that $F$ has class number $1$ to ensure that the last map is surjective. The Galois group of $F^{(\mathfrak{p}_2)}/F$ 
fits into the short exact sequence
\begin{equation} \label{ses2}
1 \rightarrow 3 \Gal(F(3_1^2 \mathfrak{p}_2)/F)  \rightarrow \Gal(F(3_1^2 \mathfrak{p}_2)/F) \rightarrow \Gal(F^{(\mathfrak{p}_2)}/F) \rightarrow 1
\end{equation}
where $3 \Gal(F(3_1^2 \mathfrak{p})/F)$ denotes the cubes in $\Gal(F(3_1^2 \mathfrak{p}_2)/F)$.  The link between the sequences (\ref{ses}) and (\ref{ses2}) is the Artin map $H(3_1^2 \mathfrak{p}_2) \xrightarrow{\sim} \Gal(F(3_1^2 \mathfrak{p}_2)/F)$, which sends the class of $\mathfrak{p}_1$ in $H(3_1^2 \mathfrak{p}_2)$ to the Frobenius element $\sigma_{\mathfrak{p}_1} \in \Gal(F(3_1^2 \mathfrak{p}_2)/F)$ of $\mathfrak{p}_1$. The condition $f(\mathfrak{p}_1 , F^{(\mathfrak{p}_2)}/F)=1$ is equivalent to $\sigma_{\mathfrak{p}_1}$ having trivial restriction to $\Gal(F^{(\mathfrak{p}_2)}/F)$. Using exactness of the sequences above, we see that $f(\mathfrak{p}_1 , F^{(\mathfrak{p}_2)}/F)=1$ if and only if there exists a unit $u \in \mathcal{O}_F^\times$ such that $u \pi_1$ is a cube in $(\mathcal{O}_F / 3_1^2 \mathfrak{p}_2)^\times$. By the Chinese remainder theorem, this is equivalent to $u \pi_1$ being a cube in $(\mathcal{O}_F / 3_1^2 )^\times$ and in $(\mathcal{O}_F / \mathfrak{p}_2)^\times$. By Lemma \ref{units_are_cubes_modulo_primes}, $u \pi_1$ is a cube in $(\mathcal{O}_F / \mathfrak{p}_2)^\times$ if and only if $\pi_1$ is a cube in $(\mathcal{O}_F / \mathfrak{p}_2)^\times$. By item (2) above and the fundamental exact sequence, the natural map $\mathcal{O}_F^\times \rightarrow (\mathcal{O}_F / 3_1^2 )^\times$ is surjective, so we can always find $u \in \mathcal{O}_F^\times$ such that $u \pi_1$ is a cube in $(\mathcal{O}_F / 3_1^2)^\times$. It follows that $f(\mathfrak{p}_1 , F^{(\mathfrak{p}_2)}/F)=1$ if and only if $\pi_1$ is a cube in $(\mathcal{O}_F / \mathfrak{p}_2)^\times$ as desired.
\end{proof}

\section{Short character sums}\label{short_character_sums}

Our argument relies on bounds for short character sums, and we now state a standard conjecture for these sums. If $\chi$ is a Dirichlet character modulo $q$, we define the incomplete character sum
\begin{equation*}
S_{\chi}(M,N) := \sum_{M< a \leq M+N} \chi(a)
\end{equation*}
for integers $M$ and $N$ with $N \geq 1$. When $\chi$ is non-principal, we should expect to find cancellation, and we make the following the conjecture (similar to \cite[Eqn. (9.4)]{FIMR}):

\begin{conjecture}[Conjecture $C_n$]
Let $n \geq 3$, $Q \geq 3$ and $N \leq Q^{\frac{1}{n}}$. For any non-principal cubic Dirichlet character $\chi$ of modulus $q \leq Q$ and $\varepsilon >0$, we have
\begin{equation*}
S_{\chi}(M,N) \ll_{\varepsilon,n} Q^{\frac{1-\delta}{n}+\varepsilon}
\end{equation*}
for all $M$ and some $\delta= \delta(n)>0$. The implied constant depends only on $\varepsilon$ and $n$.
\end{conjecture}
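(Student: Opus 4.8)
\begin{remark}
We do not prove Conjecture $C_n$; like its quadratic model \cite[Eqn.~(9.4)]{FIMR} it is a hypothesis on short character sums whose full strength lies beyond current technology for large $n$, and our main results are conditional on it for $n=12$. We indicate here the plan one would follow and where it stalls.

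If $q$ is small relative to $N$, completing the sum gives $S_\chi(M,N)\ll q^{1/2}\log q$, which comfortably suffices, so the content of the conjecture sits in the regime where $q$ is close to $Q$ and $N$ is a small power of $q$, say $N\asymp q^{\theta}$ with $\theta\le 1/n$. There the two available tools are the trivial bound $|S_\chi(M,N)|\le N$ and Burgess's estimate
\begin{equation*}
S_\chi(M,N)\ll_{\varepsilon,r} N^{1-1/r}\,q^{\frac{r+1}{4r^2}+\varepsilon}\qquad (r\ge 1),
\end{equation*}
valid for cube-free modulus (the case of general modulus requiring more work). The trivial bound already yields the conjecture with an explicit $\delta>0$ unless both $q$ and $N$ lie within $Q^{o(1)}$ of their extreme values $Q$ and $Q^{1/n}$; in that remaining range $N\asymp q^{1/n}$, optimising Burgess over $r$ produces a power saving $S_\chi(M,N)\ll N^{1-\delta}$ precisely when $1/n>1/4$. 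Thus the plan goes through for $n=3$, giving Conjecture $C_3$ unconditionally with an explicit $\delta>0$, in line with the small-$n$ cases of \cite{FIMR}.

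The obstruction, and the reason $C_{12}$ remains conjectural, is the Burgess barrier: for $n\ge 4$, and in particular for $n=12$, the decisive case $N\asymp q^{1/n}$ requires cancellation in a character sum over an interval no longer than $q^{1/4}$, where no power saving is known. The completion method returns only $q^{1/2+o(1)}$ there, and GRH does not rescue the situation: it bounds the partial sums $\sum_{a\le x}\chi(a)$ by $x^{1/2+o(1)}$, which for a short interval based at a point $M\asymp q$ still only gives $q^{1/2+o(1)}$, far from a power saving in $N$. Nor should one expect the cubic nature of $\chi$ to help, since $S_\chi(M,N)$ carries no distinguished multiplicative length or completeness; the problem has the same difficulty as for a generic primitive character of order $3$. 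We therefore take Conjecture $C_{12}$ as a black box and devote the rest of the paper to deducing Theorem \ref{main_theorem} from it.
\end{remark}
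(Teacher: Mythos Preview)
Your remark is appropriate: the statement is a conjecture, and the paper does not attempt to prove it either. The paper's own commentary after the conjecture is limited to two sentences---that $C_3$ for cubefree moduli follows from the Burgess bound, and that $C_n$ is independent of GRH---and your discussion recovers both points while supplying the mechanism behind them (the Burgess barrier at $N\asymp q^{1/4}$ and why GRH yields only $q^{1/2+o(1)}$ on short intervals). There is nothing to compare at the level of proof, and your elaboration is consistent with, and more detailed than, what the paper provides.
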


\noindent
Conjecture $C_3$ for cubefree moduli follows from Burgess bound \cite{burgess}. Conjecture $C_n$ is independent of GRH in the sense that it does not imply GRH, nor is it implied by GRH.\\

\noindent
We need a variant of Conjecture $C_n$ for arithmetic progressions, but only for some specific characters. Let $E$ be a number field containing $\zeta_3$ and let $\mathfrak{q}$ be a non-zero ideal of $\mathcal{O}_E$ such that $q:=N(\mathfrak{q})$ is a squarefree integer coprime to $3$. We then set $\chi_{\mathfrak{q}}(\ell):=\big( \frac{\ell}{\mathfrak{q}} \big)_{3,E}$ for any integer $\ell$. This is a Dirichlet character of modulus $q$. When $q>1$, it is non-principal.

\begin{lemma}\label{non_principal}
Suppose $q>1$. Then there is an integer $\ell$ coprime to $q$ such that $\chi_{\mathfrak{q}}(\ell)\neq 1$. 
\end{lemma}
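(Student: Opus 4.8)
The claim is that the cubic character $\chi_{\mathfrak{q}}(\ell)=\big(\frac{\ell}{\mathfrak{q}}\big)_{3,E}$, which takes values in $\{1,\zeta_3,\zeta_3^2\}$ on integers coprime to $q=N(\mathfrak{q})$, is not identically $1$ on that set. The plan is to argue by contradiction: suppose $\chi_{\mathfrak q}(\ell)=1$ for every rational integer $\ell$ coprime to $q$, and derive that $q=1$.

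First I would reduce to a prime power. Write $\mathfrak q=\prod_i \mathfrak q_i^{e_i}$ into prime ideals; since $q=N(\mathfrak q)$ is squarefree, the primes $\mathfrak q_i$ lie over distinct rational primes $p_i$, each $\mathfrak q_i$ has residue degree $1$ (so $N(\mathfrak q_i)=p_i$), and each $e_i=1$. Thus $\chi_{\mathfrak q}=\prod_i \chi_{\mathfrak q_i}$ as Dirichlet characters, and $\chi_{\mathfrak q_i}$ is a character mod $p_i$; by the Chinese Remainder Theorem $\chi_{\mathfrak q}$ is trivial on $(\mathbb Z/q)^\times$ iff each $\chi_{\mathfrak q_i}$ is trivial on $(\mathbb Z/p_i)^\times$. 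So it suffices to show that for a degree-$1$ prime $\mathfrak p$ of $E$ over a rational prime $p$ (with $p\neq 3$), the character $\ell\mapsto\big(\frac{\ell}{\mathfrak p}\big)_{3,E}$ on $(\mathbb Z/p)^\times$ is non-trivial.

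Now I would use the defining congruence: for $\ell$ coprime to $p$, $\big(\frac{\ell}{\mathfrak p}\big)_{3,E}\equiv \ell^{(N(\mathfrak p)-1)/3}=\ell^{(p-1)/3}\pmod{\mathfrak p}$. Since $\mathfrak p$ has degree $1$, reduction mod $\mathfrak p$ identifies $\mathbb Z/p\mathbb Z$ with $\mathcal O_E/\mathfrak p\cong\mathbb F_p$; note $3\mid p-1$ here because $\mathbb F_p=\mathcal O_E/\mathfrak p$ contains the image of $\zeta_3$, which has order $3$. If $\chi_{\mathfrak p}$ were trivial, then $\ell^{(p-1)/3}\equiv 1\pmod{\mathfrak p}$ for all $\ell$ coprime to $p$, i.e. every element of $\mathbb F_p^\times$ is a cube; but $\mathbb F_p^\times$ is cyclic of order $p-1$ with $3\mid p-1$, so the cubes form an index-$3$ subgroup — a contradiction unless $p-1$ has no factor of $3$, which it does. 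Hence $\chi_{\mathfrak p}$ is non-trivial, so there is $\ell$ with $\big(\frac{\ell}{\mathfrak p}\big)_{3,E}\neq 1$; by CRT one lifts this to an $\ell$ coprime to all of $q$ with $\chi_{\mathfrak q}(\ell)\neq 1$ (choosing $\ell\equiv 1$ at the other prime factors), completing the proof.

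The argument is essentially routine; the only point requiring a little care is the observation that $3\mid p-1$ for every rational prime $p$ lying below a prime of $E$ coprime to $3$, which holds because $\zeta_3\in\mathcal O_E$ forces the residue field $\mathbb F_{N(\mathfrak p)}$ to contain a primitive cube root of unity, and in the degree-$1$ case $\mathbb F_{N(\mathfrak p)}=\mathbb F_p$. Everything else is the cyclicity of $\mathbb F_p^\times$ together with multiplicativity of the cubic symbol and the Chinese Remainder Theorem, so I do not expect any genuine obstacle.
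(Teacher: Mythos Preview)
Your argument is correct and follows essentially the same route as the paper's proof. Both proofs observe that the squarefreeness of $q=N(\mathfrak q)$ forces every prime divisor of $\mathfrak q$ to have residue degree $1$ (and exponent $1$) over its underlying rational prime $p$, use $\zeta_3\in E$ to deduce $p\equiv 1\pmod 3$, pick a non-cube modulo $p$, and then lift via the Chinese Remainder Theorem by setting $\ell\equiv 1$ at the remaining prime factors of $q$; your write-up just makes the reduction step slightly more explicit than the paper's ``it is easy to see''.
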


\begin{proof}
Let $p$ be a prime factor of $q$. Since $N(\mathfrak{q})$ is squarefree, there must be a prime $\mathfrak{p}$ of $E$ which divides $\mathfrak{q}$, lies over $p$, and has $f_{\mathfrak{p}/p}=1$. Since $\zeta_3 \in E$, this forces $p$ to split in $\mathbb{Q}(\zeta_3)$, i.e. $p \equiv 1 \bmod{3}$ (note that $p\neq 3$ since $3\nmid q$). Hence the cubes have index $3$ in $\mathbb{F}_p^\times$ so we can choose $\ell_0$ not divisible by $p$ and not a cube modulo $p$. Since $q$ is squarefree, $p$ and $q/p$ are coprime so by the Chinese remainder theorem, we can choose $\ell$ such that $\ell \equiv \ell_0\bmod{p}$, and $\ell \equiv 1\bmod{q/p}$. With this choice of $\ell$, it is easy to see that $\chi_{\mathfrak{q}}(\ell)\neq 1$.
\end{proof}

Arguing as in \cite[Cor. 7]{Pell}, we deduce

\begin{corollary}\label{short_character_sum_corollary}
Let $\chi_{\mathfrak{q}}$ be as above, and assume Conjecture $C_n$. Then there exists $\delta=\delta(n)>0$ such that for all $\varepsilon>0$, the following holds: For all $Q \geq 3$, all $\mathfrak{q}$ as above with $N(\mathfrak{q}) \leq Q$ and all $N \leq Q^{\frac{1}{n}}$, we have
  \begin{equation*}
\sum_{\substack{
    M \leq a \leq M+N\\
    n \equiv l \bmod{k}
}} \chi_{\mathfrak{q}}(a) \ll_{\varepsilon ,n} Q^{\frac{1-\delta}{n}+\varepsilon}
\end{equation*}
for all integers $M,N,k,l$ with $N>0$ and $q \nmid k$. The implied constant depends only on $\varepsilon$ and $n$. 
\end{corollary}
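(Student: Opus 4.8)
The plan is to transform the sum over the arithmetic progression into an ordinary incomplete sum of a non-principal cubic Dirichlet character over an interval, and then to invoke Conjecture $C_n$. Throughout, we may assume $k \neq 0$ (since $q \mid 0$ contradicts $q \nmid k$) and, replacing $k$ by $-k$ if necessary, that $k > 0$.

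First I would set $d := \gcd(k,q)$ and $q' := q/d$. Since $q$ is squarefree, $\gcd(d,q')=1$, and since $q \nmid k$ we have $q' > 1$. We may also assume $\gcd(l,q)=1$: otherwise $d \mid k$ forces $\gcd(a,d)=\gcd(l,d)>1$ for every $a \equiv l \bmod k$, so $\chi_{\mathfrak{q}}(a)=0$ identically and the sum vanishes. Because $N(\mathfrak{q})$ is squarefree, every prime factor of $\mathfrak{q}$ has residue degree $1$ over $\mathbb{Q}$ (else its norm would be a proper prime power), and the primes dividing $\mathfrak{q}$ lie over distinct rational primes $p \mid q$, each $\equiv 1 \bmod 3$; as in the proof of Lemma~\ref{non_principal}, the restriction of $\chi_{\mathfrak{q}}$ to $\mathbb{Z}$ is therefore a product $\prod_{p \mid q}\chi_p$ where $\chi_p$ is the cubic residue character modulo $p$, non-trivial of order exactly $3$. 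By the Chinese remainder theorem I factor $\chi_{\mathfrak{q}} = \chi_d\,\chi_{q'}$ with $\chi_d = \prod_{p \mid d}\chi_p$ and $\chi_{q'} = \prod_{p \mid q'}\chi_p$.

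Now write $a = l + km$; the constraint $M \le a \le M+N$ confines $m$ to an interval $I$ of consecutive integers with $\#I \le N/k + 1$. Since $d \mid k$ we have $a \equiv l \pmod d$, so $\chi_d(a) = \chi_d(l)$ is constant on the progression. On the other hand $\gcd(k,q') \mid \gcd(d,q') = 1$, so $k$ is invertible modulo $q'$; letting $\bar k$ denote an inverse, $\chi_{q'}(l+km) = \chi_{q'}(k)\,\chi_{q'}(\bar k l + m)$. Substituting $m' = \bar k l + m$, which runs over a translate $I'$ of $I$ with $\#I' = \#I$, gives
\begin{equation*}
\sum_{\substack{M \le a \le M+N\\ a \equiv l \bmod k}} \chi_{\mathfrak{q}}(a) = \chi_d(l)\,\chi_{q'}(k)\sum_{m' \in I'}\chi_{q'}(m').
\end{equation*}
Since $q' > 1$ and each local factor $\chi_p$ with $p \mid q'$ is a non-trivial cubic character, $\chi_{q'}$ is a non-principal cubic Dirichlet character of modulus $q' \le q \le Q$, and $\#I' \le N/k + 1 \le N + 1 \le Q^{1/n} + 1$.

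Finally I apply Conjecture $C_n$. Splitting $I'$ into a subinterval of length at most $Q^{1/n}$ together with a remainder of length $O(1)$, I bound the sum over the former by $\ll_{\varepsilon,n} Q^{(1-\delta)/n+\varepsilon}$ using Conjecture $C_n$ with the $\delta = \delta(n) > 0$ it furnishes, and the remainder trivially by $O(1)$. As $(1-\delta)/n + \varepsilon > 0$ and $Q \ge 3$, the two contributions combine to $\ll_{\varepsilon,n} Q^{(1-\delta)/n+\varepsilon}$; multiplying by $|\chi_d(l)\chi_{q'}(k)| \le 1$ yields the claim, with implied constants depending only on $\varepsilon$ and $n$. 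The only mildly delicate points are the bookkeeping with the common divisor $d = \gcd(k,q)$ — in particular verifying that $q'$ remains $>1$ and that $\chi_{q'}$ remains non-principal — and ensuring the transformed interval stays within the range where Conjecture $C_n$ is applicable; neither is a genuine obstacle, and this is precisely the cubic analogue of the argument in \cite[Cor.~7]{Pell}.
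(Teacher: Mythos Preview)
Your proof is correct and follows precisely the approach the paper intends: the paper simply writes ``Arguing as in \cite[Cor.~7]{Pell}, we deduce'' without giving details, and what you have written is exactly the cubic analogue of that argument---factor $\chi_{\mathfrak q}=\chi_d\chi_{q'}$ along $q=dq'$ with $d=\gcd(k,q)$, pull out the constant $\chi_d(l)$, change variables to reduce to an incomplete sum of the non-principal cubic character $\chi_{q'}$ over an interval of length $\le Q^{1/n}+O(1)$, and invoke Conjecture~$C_n$.

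One small slip that does not affect the argument: your reduction ``we may assume $\gcd(l,q)=1$, otherwise $\gcd(a,d)=\gcd(l,d)>1$'' is not quite right, since $\gcd(l,q)>1$ need not imply $\gcd(l,d)>1$ (e.g.\ $q=7\cdot 13$, $k=7$, $l=13$). But this assumption is in fact unnecessary: the identity $\chi_{q'}(l+km)=\chi_{q'}(k)\chi_{q'}(\bar kl+m)$ holds regardless because $\gcd(k,q')=1$, and the factor $\chi_d(l)$ is simply $0$ if $\gcd(l,d)>1$. You can safely delete that sentence.
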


\section{Vinogradov's sieve}\label{vinogradov_sieve}
We present the sieve, we will use to estimate $\sum_{N(\mathfrak{p}) \leq X} s_{\mathfrak{p}}$. Let $E$ be a number field and $(a_{\mathfrak{n}})_{\mathfrak{n}}$ a sequence of complex numbers labelled by the integral ideals of $E$. If $\mathfrak{m}$ is a non-zero integral ideal of $E$ and $X$ a positive real number, we define
\begin{equation*}
A_{\mathfrak{m}}(X):= \sum_{\substack{
    N(\mathfrak{n}) \leq X\\
    \mathfrak{m} \mid \mathfrak{n}
}} a_{\mathfrak{n}}.
\end{equation*}
When $M$ and $N$ positive real numbers, and $(v_{\mathfrak{m}})_{\mathfrak{m}}$ and $(w_{\mathfrak{n}})_{\mathfrak{n}}$ sequences of complex numbers satisfying $\lvert v_{\mathfrak{m}} \rvert, \lvert w_{\mathfrak{n}} \rvert \leq 1$, we define the bilinear sum
\begin{equation*}
B(M,N) := \sum_{N(\mathfrak{m}) \leq M} \sum_{N(\mathfrak{n}) \leq N} v_{\mathfrak{m}} w_{\mathfrak{n}} a_{\mathfrak{m} \mathfrak{n}}.
\end{equation*}
We refer to $A_{\mathfrak{m}}(X)$ as \emph{sums of type I} and to $B(M,N)$ as \emph{sums of type II}. The theorem below is sometimes known as Vinogradov's sieve (see for example \cite{peter}) because it originates from Vinogradov's work on representing odd integers as sums of three primes. The sieve in the form, we present, is \cite[Prop. 5.2]{FIMR}. 

\begin{theorem}\label{vinogradov}
Suppose $\lvert a_{\mathfrak{n}} \rvert \leq 1$ for all $\mathfrak{n}$, and we have fixed real numbers $0 < \vartheta , \theta <1$ such that for each $\varepsilon >0$ the following estimates hold:
\begin{equation*}
A_{\mathbf{m}}(X) \ll_\varepsilon X^{1-\vartheta +\varepsilon}
\end{equation*}
uniformly in all non-zero ideals $\mathfrak{m}$, and
\begin{equation*}
B(M,N) \ll_\varepsilon(M+N)^{\theta}(M N)^{1- \theta + \varepsilon}
\end{equation*}
uniformly in all sequences $(v_{\mathfrak{m}})_{\mathfrak{m}}$ and $(w_{\mathfrak{n}})_{\mathfrak{n}}$ satisfying $\lvert v_{\mathfrak{m}} \rvert, \lvert w_{\mathfrak{n}} \rvert \leq 1$. Then
\begin{equation*}
\sum_{N(\mathfrak{n}) \leq X} a_{\mathfrak{n}} \Lambda(\mathfrak{n}) \ll_\varepsilon X^{1-\frac{\vartheta \theta}{2+\theta}+\varepsilon}
\end{equation*}
for all $\varepsilon>0$. 
\end{theorem}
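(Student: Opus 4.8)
The plan is to run Vaughan's identity in the ring of Dirichlet-convolution arithmetic functions on nonzero integral ideals of $E$, which reduces $\sum_{N(\mathfrak{n})\leq X}a_{\mathfrak{n}}\Lambda(\mathfrak{n})$ to sums of types~I and~II, and then to optimise a single parameter. Write $\mathbf{1}$ for the constant function $1$, $\mu$ for the M\"obius function, $\delta$ for the convolution identity, $L(\mathfrak{n}):=\log N(\mathfrak{n})$, and, for an arithmetic function $g$ and a cut-off $U\geq 2$, let $g_{\leq U}(\mathfrak{n})$ equal $g(\mathfrak{n})$ if $N(\mathfrak{n})\leq U$ and $0$ otherwise. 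Exactly as over $\mathbb{Z}$ (by manipulating the Dirichlet series $-\zeta_E'/\zeta_E$) one has Vaughan's identity
\begin{equation*}
\Lambda = \Lambda_{\leq U} - \mathbf{1}*\Lambda_{\leq U}*\mu_{\leq U} + L*\mu_{\leq U} + (\Lambda-\Lambda_{\leq U})*(\delta-\mathbf{1}*\mu_{\leq U}).
\end{equation*}
Substituting this into the sum of interest produces four sums, which I would estimate in turn, using freely that $\#\{\mathfrak{m}:N(\mathfrak{m})\leq Y\}\ll_E Y$, that $d(\mathfrak{m})\ll_{E,\varepsilon}N(\mathfrak{m})^\varepsilon$, the Chebyshev bound $\sum_{N(\mathfrak{n})\leq Y}\Lambda(\mathfrak{n})\ll_E Y$, and $|a_{\mathfrak{n}}|\leq 1$.

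The first sum is $\ll\sum_{N(\mathfrak{n})\leq U}\Lambda(\mathfrak{n})\ll U$. For the second and third sums I would collapse the ``smooth'' variables. In $\mathbf{1}*\Lambda_{\leq U}*\mu_{\leq U}$, write $\mathfrak{n}=\mathfrak{m}\mathfrak{c}$ with $\mathfrak{m}$ carrying the $\Lambda_{\leq U}$ and $\mu_{\leq U}$ factors: then $N(\mathfrak{m})\leq U^2$, the coefficient of $\mathfrak{m}$ has absolute value at most $\sum_{\mathfrak{a}\mid\mathfrak{m}}\Lambda(\mathfrak{a})=\log N(\mathfrak{m})$, and the inner sum over $\mathfrak{c}$ is exactly $A_{\mathfrak{m}}(X)$; so by the type~I hypothesis this sum is $\ll\sum_{N(\mathfrak{m})\leq U^2}\log N(\mathfrak{m})\,|A_{\mathfrak{m}}(X)|\ll U^2X^{1-\vartheta+\varepsilon}$. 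The third sum $\sum_{N(\mathfrak{b})\leq U}\mu(\mathfrak{b})\sum_{N(\mathfrak{c})\leq X/N(\mathfrak{b})}\log N(\mathfrak{c})\,a_{\mathfrak{b}\mathfrak{c}}$ is handled the same way after partial summation removes the weight $\log N(\mathfrak{c})$ (at the cost of a factor $\log X$), since $\sum_{N(\mathfrak{c})\leq t}a_{\mathfrak{b}\mathfrak{c}}=A_{\mathfrak{b}}(tN(\mathfrak{b}))$ with $tN(\mathfrak{b})\leq X$; the type~I hypothesis then gives $\ll UX^{1-\vartheta+\varepsilon}$.

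The fourth sum is the heart of the matter. Since $\mathbf{1}*\mu=\delta$, we have $\delta-\mathbf{1}*\mu_{\leq U}=\mathbf{1}*\mu_{>U}$, which vanishes on ideals of norm $\leq U$ and satisfies $|(\mathbf{1}*\mu_{>U})(\mathfrak{d})|\leq d(\mathfrak{d})\ll_\varepsilon N(\mathfrak{d})^\varepsilon$. Hence the fourth sum equals
\begin{equation*}
\sum_{\substack{N(\mathfrak{a}\mathfrak{d})\leq X\\ N(\mathfrak{a})>U,\ N(\mathfrak{d})>U}}\Lambda(\mathfrak{a})\,(\mathbf{1}*\mu_{>U})(\mathfrak{d})\,a_{\mathfrak{a}\mathfrak{d}},
\end{equation*}
a bilinear sum in which both variables have norm in the range $(U,X/U]$. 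I would decompose the ranges of $N(\mathfrak{a})$ and $N(\mathfrak{d})$ into $O((\log X)^2)$ dyadic boxes $N(\mathfrak{a})\asymp A$, $N(\mathfrak{d})\asymp D$ with $AD\ll X$, normalise $\Lambda(\mathfrak{a})$ by $\log X$ and $(\mathbf{1}*\mu_{>U})(\mathfrak{d})$ by $C_\varepsilon D^\varepsilon$ so that the resulting weight sequences are bounded by $1$, and apply the type~II hypothesis to each box, obtaining a contribution $\ll X^{\varepsilon}(A+D)^\theta(AD)^{1-\theta}\ll X^{\varepsilon}(X/U)^{\theta}X^{1-\theta}$, since $A+D\ll X/U$ and $AD\leq X$. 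Summing over the boxes, the fourth sum is $\ll X^{1+\varepsilon}U^{-\theta}$.

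Collecting the four estimates gives $\sum_{N(\mathfrak{n})\leq X}a_{\mathfrak{n}}\Lambda(\mathfrak{n})\ll X^{\varepsilon}(U+U^2X^{1-\vartheta}+UX^{1-\vartheta}+XU^{-\theta})$, and choosing $U=X^{\vartheta/(2+\theta)}$ balances the second and fourth terms, each of which becomes $X^{1-\vartheta\theta/(2+\theta)}$, while the first and third are strictly smaller because $\vartheta,\theta<1$; the claimed bound follows. The step I expect to be the main obstacle is the type~II estimate: besides checking that the weights $\Lambda(\mathfrak{a})$ and $(\mathbf{1}*\mu_{>U})(\mathfrak{d})$ are genuinely $O(X^\varepsilon)$ after dyadic localisation, one must accommodate the hyperbola constraint $N(\mathfrak{m}\mathfrak{n})\leq X$, which is not literally of the shape $B(M,N)$. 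Dyadic boxes lying entirely inside $\{N(\mathfrak{m}\mathfrak{n})\leq X\}$ reduce at once to the type~II hypothesis; the $O(\log X)$ boxes straddling the boundary hyperbola need a standard smoothing argument (or a finer subdivision of the shorter variable followed by a trivial bound for the ambiguous part), and contribute only $X^\varepsilon$ together with a negligible error. None of this uses anything about $E$ beyond that it is a number field, so the statement holds in the stated generality.
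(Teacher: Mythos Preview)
The paper does not prove this theorem at all: it simply quotes it as \cite[Prop.~5.2]{FIMR}, so there is no in-paper argument to compare against. Your Vaughan-identity proof is correct and is essentially the standard derivation of such a sieve result; the identity you wrote down is valid (one checks it using $L=\Lambda*\mathbf{1}$ and $\delta-\mathbf{1}*\mu_{\leq U}=\mathbf{1}*\mu_{>U}$), the four contributions are bounded as you claim, and the balancing $U=X^{\vartheta/(2+\theta)}$ gives the stated exponent. The one point you flagged---the hyperbola constraint $N(\mathfrak{m}\mathfrak{n})\leq X$ in the type~II piece---is indeed the only place requiring care, but your outline (dyadic boxes, with the $O(\log X)$ boundary boxes handled by finer subdivision or a standard separation-of-variables device) is the usual fix and costs only $X^\varepsilon$.
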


\noindent
Here $\Lambda$ is the natural generalization of the von Mangoldt function to number fields: $\Lambda(\mathfrak{n})= \log N(\mathfrak{p})$ if $\mathfrak{n} = \mathfrak{p}^r$ for some prime ideal $\mathfrak{p}$ and integer $r \geq 1$, otherwise $\Lambda(\mathfrak{n})=0$. Using partial summation, one deduces an estimate of the form $\sum_{N(\mathfrak{p}) \leq X} a_{\mathfrak{p}} \ll X^{1- \delta}$ for some positive $\delta$. In spin problems, estimates for sums of type I are often conditional on conjectures on short character sums, whereas estimates for sums of type II are unconditional.\\

\noindent
For our application, we take $E = F'$, and $a_{\mathfrak{n}}$ will essentially be the spin symbol $s_{\mathfrak{n}}$. However, we would like to prove that the values of $s_{\mathfrak{n}}$ equidistribute in any abelian Chebotarev class, and we therefore do the following: Let $h_0:=\# H_0$, and fix distinct prime ideals $\mathfrak{p}_1$, ..., $\mathfrak{p}_{h_0}$ of degree $1$ over $\mathbb{Q}$  and coprime to $3$ that represent the classes of $H_0$. If the class of an ideal $\mathfrak{n}$ lies in $H_0$, we have $\mathfrak{n}\mathfrak{p}_i=(\alpha)$ for some $i \in \{1,\ldots,h_0\}$ and some $\alpha\in\mathcal{O}_{F'}$. For $i \in \{1,....,h_0\}$, a non-zero ideal $\mathfrak{M}$ of $\mathcal{O}_{F'}$, and $\mu\in(\mathcal{O}_{F'}/\mathfrak{M}\mathcal{O}_{F'})^{\times}$, we set
\begin{equation*}
r_i(\mathfrak{n},\mathfrak{M},\mu) := 
\begin{cases}
1 & \textrm{if }\mathfrak{n}\mathfrak{p}_i=(\alpha)\textrm{ for some }\alpha\equiv\mu\pmod{\mathfrak{M}}\\
0 & \textrm{otherwise}
\end{cases}.
\end{equation*}
For fixed $i$, $\mathfrak{M}$ and $\mu$, we take $a_{\mathfrak{n}}:=r_i(\mathfrak{n},\mathfrak{M},\mu)s_{\mathfrak{n}}$ and prove the following estimates:
\begin{proposition}\label{sums_of_type_I}
Assume Conjecture $C_{12}$. Then there is $\vartheta>0$ such that for all $\varepsilon>0$, we have
\begin{equation*}
\sum_{\substack{N(\mathfrak{n})\leq X\\ \mathfrak{m}\mid\mathfrak{n}}}r_i(\mathfrak{n},\mathfrak{M},\mu)s_{\mathfrak{n}}\ll_{\varepsilon} X^{1-\vartheta+\varepsilon}
\end{equation*}
uniformly in all non-zero ideals $\mathfrak{m}$ of $\mathcal{O}_{F'}$.
\end{proposition}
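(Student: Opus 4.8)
The plan is to reduce the sum over ideals $\mathfrak{n}$ with $\mathfrak{m}\mid\mathfrak{n}$ to an incomplete cubic character sum over a short interval of rational integers, and then apply Corollary \ref{short_character_sum_corollary} with $n=12$. First I would dispose of the ideals $\mathfrak{n}$ that are not of the relevant form: since $s_{\mathfrak{n}}$ is only defined (and only nonzero) when the class of $\mathfrak{n}$ lies in $H_0$ and $\mathfrak{n}$ is coprime to $\mathbf{m}$, and since $r_i(\mathfrak{n},\mathfrak{M},\mu)$ already forces $\mathfrak{n}\mathfrak{p}_i$ to be principal, the nonzero terms come from ideals $\mathfrak{n}$ with $\mathfrak{n}\mathfrak{p}_i=(\alpha)$, $\alpha\equiv\mu\pmod{\mathfrak{M}}$, and $\mathfrak{m}\mid\mathfrak{n}$. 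Writing $\mathfrak{n}=\mathfrak{m}\mathfrak{n}'$, this becomes a condition $\mathfrak{m}\mathfrak{n}'\mathfrak{p}_i=(\alpha)$; absorbing $\mathfrak{m}\mathfrak{p}_i$ into a fixed auxiliary ideal and using that $F$ (hence, via the ray class group $H_0$, the relevant ideals of $F'$ up to a bounded index) has small class number, I can parametrise the surviving $\mathfrak{n}$ by elements $\alpha$ in a fixed fractional ideal, lying in a fixed residue class modulo $\mathfrak{M}$ and modulo the units, with $N_{F'/\mathbb{Q}}(\alpha)\asymp N(\mathfrak{m})^{-1}X$.

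Next I would use the defining formula for the spin symbol. For such $\mathfrak{n}$, by Definition \ref{the_spin_symbol} and Lemma \ref{alternative} (in the lifted form \eqref{alternative_expression}), $s_{\mathfrak{n}}=\big(\tfrac{\sigma(\alpha_0)}{\mathfrak{n}\mathcal{O}_{K'}}\big)_{3,K'}$ where $\alpha_0$ generates $N_{F'/F}(\mathfrak{n})$; equivalently, after expressing $\mathfrak{n}$ in terms of the variable $\alpha$ above and using cubic reciprocity (Proposition \ref{cubic_reciprocity}) to flip the symbol, $s_{\mathfrak{n}}$ becomes, up to a bounded root-of-unity factor depending only on congruence data modulo $27$ and on $\mathfrak{M}$, a value $\big(\tfrac{\cdot}{\mathfrak{q}}\big)_{3,F'}$ of a fixed cubic residue symbol attached to a fixed modulus $\mathfrak{q}$ (coming from $\mathfrak{p}_i$, $3$, $\mathbf{m}$ and the Galois twist $\sigma$), evaluated at a linear expression in $\alpha$. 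The key point, exactly as in \cite{FIMR}, is that after reciprocity the "moving" ideal moves into the \emph{bottom} of the symbol and the top becomes a \emph{fixed} linear form; this is where the norm $N_{K/F}$ and the degree-$1$ condition on $\mathfrak{p}$ are essential, since they guarantee $\sigma(\alpha)$ lives in $K'$ and its norm down to a rational integer can be made to vary over a genuine interval of $\mathbb{Z}$ rather than over a sparse set. I would then sum over a fundamental domain for the unit action: fixing a box for the archimedean coordinates of $\alpha$ (there are $O_\varepsilon(X^\varepsilon)$ such boxes, by a standard covering of the region $N(\alpha)\le N(\mathfrak{m})^{-1}X$ by logarithmic boxes, as in \cite{FIMR, Koymans}), the innermost variable ranges over an arithmetic progression in an interval of length $\asymp (N(\mathfrak{m})^{-1}X)^{1/8}$ of rational integers (the exponent $1/8 = 1/[F':\mathbb{Q}]$), and the summand is $\chi_{\mathfrak{q}}$ of a fixed linear polynomial in that variable.

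At that point Corollary \ref{short_character_sum_corollary}, applied with $Q\asymp X$, $n=12$, and the interval length $\le Q^{1/12}\le Q^{1/8}$ (shrinking the box if necessary so that the interval has length at most $X^{1/12}$, at the cost of $O(X^{1/24})$ extra boxes, which is harmless), gives each inner sum a power saving $\ll_\varepsilon X^{(1-\delta)/12+\varepsilon}$; multiplying by the $O_\varepsilon(X^{1-1/12+\varepsilon})$ boxes and dividing out $N(\mathfrak{m})$ yields $\ll_\varepsilon N(\mathfrak{m})^{-1}X^{1-\delta/12+\varepsilon}$, which is the claimed bound with $\vartheta=\delta/12>0$, uniformly in $\mathfrak{m}$ (the $N(\mathfrak{m})^{-1}$ only helps). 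The main obstacle I anticipate is the bookkeeping in the reciprocity step over a non-Galois base field: I must check that flipping $\big(\tfrac{N_{K/F}\sigma(\alpha)}{\mathfrak{q}}\big)_{3,F'}$ via Proposition \ref{cubic_reciprocity} produces a symbol $\chi_{\mathfrak{q}}$ of \emph{rational} modulus $N(\mathfrak{q})$ with $N(\mathfrak{q})$ squarefree and coprime to $3$ (so that Lemma \ref{non_principal} and Corollary \ref{short_character_sum_corollary} apply), that the "flip factor" $\mu$ genuinely depends only on residues modulo $27$ and hence can be pulled out of the inner sum, and that the degree-$1$ hypothesis on $\mathfrak{p}_i$ together with the shape of $\sigma(\alpha)$ makes the top of the symbol a linear — not higher degree — form in the inner integer variable; controlling the density-zero exceptional primes where $N(\mathfrak{q})$ fails to be squarefree, and the finitely many $\mathfrak{n}$ dividing $\mathbf{m}$, is routine and contributes $O(X^\varepsilon)$ to the sum.
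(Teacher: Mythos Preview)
Your proposal contains a fundamental misunderstanding of the reciprocity step that makes the argument collapse. After writing $\alpha=a+\beta$ with $a\in\mathbb{Z}$ and $\beta\in\mathbb{M}=\mathbb{Z}\eta_2\oplus\cdots\oplus\mathbb{Z}\eta_8$, the spin symbol becomes $\big(\frac{\sigma(\beta)-\beta}{a+\beta}\big)_{3,K'}\big(\frac{\overline{\sigma}(\beta)-\beta}{a+\beta}\big)_{3,K'}$, and flipping via Proposition~\ref{cubic_reciprocity} puts $a+\beta$ on \emph{top} with the \emph{bottom} modulus $\mathfrak{c}$ (respectively $\mathfrak{c}'$) coming from $\sigma(\beta)-\beta$ (respectively $\overline{\sigma}(\beta)-\beta$). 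This modulus is not ``fixed'' and does not come from $\mathfrak{p}_i$, $3$, $\mathbf{m}$, $\sigma$ as you claim: it depends on the outer variable $\beta$ and varies over the whole sum. Its norm satisfies $q=N_{E/\mathbb{Q}}(\mathfrak{q})\ll X^{3/2}$, not $Q\asymp X$; since the inner interval has length $\asymp X^{1/8}=(X^{3/2})^{1/12}$, this is precisely why Conjecture~$C_{12}$ enters, not the reasoning you give.

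The more serious gap is your final sentence: ``controlling the density-zero exceptional primes where $N(\mathfrak{q})$ fails to be squarefree \ldots\ is routine.'' This is in fact the heart of the proof and the main new technical contribution of the paper. One must split $\mathfrak{c}=\mathfrak{g}\mathfrak{q}$ with $N(\mathfrak{g})$ squarefull and $N(\mathfrak{q})$ squarefree, and then handle separately (i) the $\beta$ for which the squarefull part $N(\mathfrak{g})$ is large, and (ii) the $\beta$ for which the squarefree modulus $q$ divides the modulus $k$ of the arithmetic progression (so that the character sum has no cancellation). Both of these require Proposition~\ref{squarefull}, the lattice-point-counting result of Section~\ref{counting_ideals}, which is new: the relevant lattice $\phi(\mathbb{M})$ has rank~$6$ inside a degree-$12$ field, and existing results such as \cite[Lemma~3.1]{Koymans} give only the trivial bound in this half-rank situation. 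The field-lowering in Lemma~\ref{field_lowering_lemma} (which kills the $\mathfrak{c}'$-factor) and the gcd estimate of Lemma~\ref{gcd} are further essential ingredients that your outline does not anticipate. Without these, there is no way to control the $\beta$ that produce a principal or nearly-principal character, and no power saving follows.
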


\begin{proposition}\label{sums_of_type_II}
We have 
\begin{equation*}
\sum_{N(\mathfrak{m})\leq M}\sum_{N(\mathfrak{n})\leq N} v_{\mathfrak{m}}w_{\mathfrak{n}} r_i(\mathfrak{mn},\mathfrak{M},\mu)s_{\mathfrak{m}\mathfrak{n}} \ll_{\varepsilon} (M+N)^{\frac{1}{48}}(MN)^{1-\frac{1}{48}+\varepsilon}
\end{equation*}
uniformly in all sequences $(v_{\mathfrak{m}})_{\mathfrak{m}}$ and $(w_{\mathfrak{n}})_{\mathfrak{n}}$ of complex numbers with modulus at most $1$.
\end{proposition}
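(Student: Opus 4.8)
The plan is to express $s_{\mathfrak m\mathfrak n}$ through a \emph{twisted multiplicativity}, to use cubic reciprocity to turn the resulting cross terms into bilinear cubic characters, and then to estimate the bilinear form by Cauchy--Schwarz combined with the ideal-counting estimates of Section \ref{counting_ideals}. In contrast with Proposition \ref{sums_of_type_I}, this argument is unconditional.

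\textbf{Twisted multiplicativity.} Since Vinogradov's sieve is applied to a sequence supported on ideals lying in $H_0$, hence coprime to $\mathbf m$, I may assume throughout that $\mathfrak m$ and $\mathfrak n$ are coprime to $3$, to $\mathbf m$, and to $\mathfrak f(K'/F')$. Using $h_F=1$, fix generators $N_{F'/F}(\mathfrak m)=(\alpha_{\mathfrak m})$ and $N_{F'/F}(\mathfrak n)=(\alpha_{\mathfrak n})$ with $\alpha_{\mathfrak m},\alpha_{\mathfrak n}\in\mathcal O_F$; then $N_{F'/F}(\mathfrak m\mathfrak n)=(\alpha_{\mathfrak m}\alpha_{\mathfrak n})$, and the multiplicativity of the cubic residue symbol in both entries yields
\begin{equation*}
s_{\mathfrak m\mathfrak n}=s_{\mathfrak m}\,s_{\mathfrak n}\cdot\left(\frac{N_{K/F}(\sigma(\alpha_{\mathfrak m}))}{\mathfrak n}\right)_{3,F'}\left(\frac{N_{K/F}(\sigma(\alpha_{\mathfrak n}))}{\mathfrak m}\right)_{3,F'},
\end{equation*}
where $s_{\mathfrak m}:=\big(N_{K/F}(\sigma(\alpha_{\mathfrak m}))/\mathfrak m\big)_{3,F'}$ and similarly for $\mathfrak n$; this coincides with Definition \ref{the_spin_symbol} when the ideal lies in $H_0$, and the dependence of the individual factors on the choices of generators cancels in the product, which is all that is used. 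By Lemma \ref{canonical} each numerator factors as $\sigma_1(\alpha)\sigma_2(\alpha)\sigma_3(\alpha)$ over the three double transpositions of $\Gal(K/\Q)$.

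\textbf{Linearising the cross terms.} I would partition the ranges of $\mathfrak m$ and $\mathfrak n$ into the finitely many residue classes modulo a fixed modulus $\mathfrak N$ divisible by $27$, $\mathbf m$, $\mathfrak f(K'/F')$ and $\mathfrak M$, and into the finitely many ideal classes of $F'$, replacing each ideal by a principal multiple $\mathfrak m\mathfrak p_j=(\gamma_{\mathfrak m})$, $\mathfrak n\mathfrak p_{j'}=(\gamma_{\mathfrak n})$ with chosen generators. On each such piece the indicator $r_i(\mathfrak m\mathfrak n,\mathfrak M,\mu)$ (which depends only on the class of $\mathfrak m\mathfrak n$ and on $\gamma_{\mathfrak m}\gamma_{\mathfrak n}\bmod\mathfrak M$), the root-of-unity factors produced by Proposition \ref{cubic_reciprocity}, the values $s_{\mathfrak m},s_{\mathfrak n}$, and the finitely many auxiliary characters $\big(\tfrac{\cdot}{\mathfrak p_j}\big)_{3,F'}$ are constant, so they may be absorbed into new coefficients of modulus at most $1$. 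Applying Proposition \ref{cubic_reciprocity} to the two cross terms and then the Galois-equivariance of the cubic residue symbol (as in the proof of Lemma \ref{alternative}) to merge conjugates, I expect to reduce, on each piece, to a bilinear sum of the shape
\begin{equation*}
\Sigma:=\sum_{N(\mathfrak m)\le M}\ \sum_{N(\mathfrak n)\le N}v'_{\mathfrak m}\,w'_{\mathfrak n}\left(\frac{\gamma_{\mathfrak m}}{\mathfrak c_{\mathfrak n}}\right)_{3,F'}\left(\frac{\gamma_{\mathfrak n}}{\mathfrak d_{\mathfrak m}}\right)_{3,F'},
\end{equation*}
where $\mathfrak c_{\mathfrak n}$ and $\mathfrak d_{\mathfrak m}$ are ideals built multiplicatively from Galois conjugates of $\gamma_{\mathfrak n}$, resp. $\gamma_{\mathfrak m}$, with norms bounded by a fixed power of $N(\mathfrak n)$, resp. $N(\mathfrak m)$, and $|v'_{\mathfrak m}|,|w'_{\mathfrak n}|\le1$.

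\textbf{The bilinear estimate.} Since the target exponent is symmetric under $M\leftrightarrow N$, I may assume $M\le N$. Applying Cauchy--Schwarz in the $\mathfrak n$-variable and expanding the square, the diagonal terms $\mathfrak m_1=\mathfrak m_2$ contribute a trivial character and are bounded after the square root by $\ll M^{1/2}N$, which lies comfortably within the target; the off-diagonal part is
\begin{equation*}
\sum_{\substack{N(\mathfrak m_1),N(\mathfrak m_2)\le M\\ \mathfrak m_1\neq\mathfrak m_2}}v'_{\mathfrak m_1}\overline{v'_{\mathfrak m_2}}\sum_{N(\mathfrak n)\le N}\left(\frac{\gamma_{\mathfrak m_1}\gamma_{\mathfrak m_2}^2}{\mathfrak c_{\mathfrak n}}\right)_{3,F'}\left(\frac{\gamma_{\mathfrak n}}{\mathfrak d_{\mathfrak m_1}\mathfrak d_{\mathfrak m_2}^2}\right)_{3,F'},
\end{equation*}
and one more application of reciprocity and Galois-equivariance rewrites the inner $\mathfrak n$-sum, up to a constant, as $\sum_{N(\mathfrak n)\le N}(\gamma_{\mathfrak n}/\mathfrak q)_{3,F'}$ with $\mathfrak q=\mathfrak q(\mathfrak m_1,\mathfrak m_2)$ of norm bounded by a fixed power of $M$. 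When $\mathfrak q$ is a perfect cube the inner sum is trivial, but only $\ll M^{4/3}$ pairs $(\mathfrak m_1,\mathfrak m_2)$ have this property, which is acceptable; otherwise $(\cdot/\mathfrak q)_{3,F'}$ is a non-trivial cubic character, and since completing the sum gives nothing in the balanced range $M\asymp N$, I would instead sort the generators $\gamma_{\mathfrak n}$ into residue classes modulo $\mathfrak q$, use that the character sums to zero over a full period so that only the error terms in counting integral ideals of bounded norm lying in a fixed residue class and ideal class survive, and bound these by the refined counting estimate of Section \ref{counting_ideals}; a further Cauchy--Schwarz on the $(\mathfrak m_1,\mathfrak m_2)$-sum (and splitting off the remaining degenerate pieces) then closes the bound. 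The hard part is exactly this counting step: the lattice-point argument of \cite{FIMR} and \cite{Koymans} breaks down over the non-Galois field $F'$, which has only one non-trivial automorphism, so the sharpened count of Section \ref{counting_ideals} is indispensable; the exponent $\tfrac1{48}$ in the statement then emerges from optimising the two Cauchy--Schwarz losses against the quality of that count, with $[F':\Q]=8$ and the cubic degree $3$ entering.
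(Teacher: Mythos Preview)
Your reduction via twisted multiplicativity and cubic reciprocity is correct and matches the paper's setup closely: the paper also splits $s_{\mathfrak m\mathfrak n}$ into diagonal factors absorbed into the weights and two cross terms, then uses reciprocity and Galois-equivariance to reduce to a bilinear form in a single symbol $\gamma(\alpha,\beta)=(\sigma(\beta)/\alpha)_{3,K'}$.

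The gap is in your final ``bilinear estimate'' step. After Cauchy--Schwarz the inner character $(\cdot/\mathfrak q)_{3,F'}$ has modulus of norm a fixed \emph{power} of $M$ (since $\mathfrak q$ is built from Galois conjugates in a degree-$24$ field), so in the balanced range $M\asymp N$ the modulus is far larger than the length of the $\mathfrak n$-sum. Sorting $\gamma_{\mathfrak n}$ into residue classes modulo $\mathfrak q$ is then useless: there is no full period to sum over, and the ``error terms in counting integral ideals'' exceed the main term. A Poly\'a--Vinogradov bound is equally hopeless. Your sketch never says what actually controls this inner sum, and ``a further Cauchy--Schwarz on the $(\mathfrak m_1,\mathfrak m_2)$-sum'' does not by itself fix the exponent; the genuine content of such bilinear estimates (as in \cite{FIMR}, \cite{Koymans}, \cite{Koymans_Rome_2024}) lies in a carefully iterated Cauchy--Schwarz or large-sieve argument whose design you have not supplied.

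Relatedly, your claim that ``the sharpened count of Section \ref{counting_ideals} is indispensable'' here is mistaken. Section \ref{counting_ideals} counts lattice points whose norm has large \emph{squarefull} part; in the paper this is used exclusively for the type I sums, to control the contribution from $\beta$ with large $g_0$. The type II bound is instead obtained as a direct consequence of \cite[Proposition~4.3]{Koymans_Rome_2024}, which takes as input only the field $F'$, a bounded modulus $M$, and the set $A_{\mathrm{bad}}$ of squarefull integers. The work specific to this paper is the verification of property (P2): one must check that $z\mapsto(\sigma(z)/\mathfrak p\mathcal O_{K'})_{3,K'}$ is a \emph{non-principal} character on $(\mathcal O_{F'}/\mathfrak p)^\times$ whenever $\mathfrak p$ is a degree-$1$ prime of $F'$. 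This uses that $\sigma(\mathcal O_{F'})$ surjects onto $\mathcal O_{K'}/\mathfrak p\mathcal O_{K'}$, which in turn rests on the finiteness of the index $[\mathcal O_{K'}:\mathcal O_{F'}\sigma(\mathcal O_{F'})]$ --- a point your proposal does not address. The exponent $\tfrac{1}{48}$ then comes out of \cite{Koymans_Rome_2024} with $[F':\mathbb Q]=8$, not from any optimisation against Section \ref{counting_ideals}.
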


\noindent
By now, there are many general results in the literature that can be used to estimate sums of type II, and, in our case, Proposition \ref{sums_of_type_II} is a consequence of \cite[Proposition 4.3]{Koymans_Rome_2024}. The hardest part of our argument is to prove Proposition \ref{sums_of_type_I}, and we must improve on existing techniques to succeed. Given Proposition \ref{sums_of_type_I} and Proposition \ref{sums_of_type_II}, we deduce Theorem \ref{main_theorem} from Theorem \ref{vinogradov} and partial summation.

\section{A fundamental domain}
Because ray class groups are finite, we will eventually reduce the problem of estimating the sums $A_{\mathfrak{m}}(X)$ and $B(M,N)$ to estimating sums over principal ideals of $F'$ with generators satisfying certain congruence conditions. Generators of principal ideals are only unique up to multiplication by units. The unit group $\mathcal{O}_{F'}^\times$ decomposes as $T \times V$ where $T$ is the torsion subgroup of $\mathcal{O}_{F'}^\times$ and $V$ is a free abelian group. In fact $T = \left\langle \xi_6 \right\rangle$, and $V$ has rank $3$. We fix one such decomposition $\mathcal{O}_{F'}^\times = T \times V$. The purpose of this section is to give a fundamental domain for the action of $V$ on $\mathcal{O}_{F'}$ consisting of elements that are not too large.\\

\noindent
Let $\eta = \{\eta_1, ..., \eta_8\}$ be an integral basis for $\mathcal{O}_{F'}$. We can embed $F' \hookrightarrow \mathbb{R}^8$ by sending $a_1 \eta_1+ \cdots + a_8 \eta_8$ to $(a_1 ,..., a_8)$. To measure sizes, we define a polynomial $f$ in variables $X_1,...,X_8$ by $f(X_1,...,X_8):= N_{F'/\mathbb{Q}}(X_1 \eta_1 +\cdots + X_8 \eta_8)$. When $S \subset \mathbb{R}^8$ and $X > 0$, we set $S(X):= \left\{(x_1 , ..., x_8) \in S\,:\, \lvert f(x_1 , ..., x_8) \rvert \leq X \right\}$. By \cite[Lemma 3.5]{imrn} we have: 

\begin{lemma}\label{fundamental_domain}
There exists a subset $\mathcal{D} \subset \mathbb{R}^8$ with the following properties:
\begin{enumerate}
\item For all $\alpha \in \mathcal{O}_{F'} \setminus \left\{ 0 \right\}$, there exists a unique $v \in V$ such that $v \alpha \in \mathcal{D}$. Moreover, if $u\in\mathcal{O}_{F'}^{\times}$, we have $u \alpha \in \mathcal{D}$ if and only if $u\in vT$.
\item $\mathcal{D}(1)$ has $7$-Lipschitz parametrizable boundary.
\item There exists a constant $C_{\eta}>0$ such that when $\alpha = a_1 \eta_1 +\cdots a_8 \eta_8\in\mathcal{D}$ with $a_1,..., a_8 \in \mathbb{Z}$, we have $\lvert a_i \rvert \leq C_{\eta} \lvert N_{F'/\mathbb{Q}}(\alpha) \rvert^{\frac{1}{8}}$ for all $i=1,...,8$. 
\end{enumerate}
\end{lemma}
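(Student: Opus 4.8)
The plan is to observe that the assertion is the case $E=F'$ of \cite[Lemma 3.5]{imrn}, and to reproduce the construction underlying it, which goes back to \cite{FIMR}. Since $F'$ contains $\zeta_3$ it is totally complex, with four complex places; the associated archimedean absolute values give a linear isomorphism $F'\otimes_{\mathbb Q}\mathbb R\cong\mathbb C^4\cong\mathbb R^8$, and composing with the fixed invertible linear map that reads off coordinates in the integral basis $\eta$ identifies this with the embedding of the statement. As every assertion of the lemma is preserved under an invertible linear change of coordinates on $\mathbb R^8$, it suffices to build $\mathcal D$ inside $\mathbb C^4$. There I would take the logarithmic map $L(x)=(2\log|x_1|,\dots,2\log|x_4|)$, so that $\sum_i L(x)_i=\log|N_{F'/\mathbb Q}(x)|$, let $H=\{y:\sum_i y_i=0\}$ with projection $\pi(y)=y-\tfrac14(\sum_i y_i)(1,1,1,1)$, note by Dirichlet's unit theorem that $\Lambda:=L(V)=L(\mathcal O_{F'}^{\times})$ is a rank-$3$ lattice in $H$, fix a $\mathbb Z$-basis $\lambda_1,\lambda_2,\lambda_3$ of $\Lambda$ and the bounded half-open parallelotope $\Phi=\{\sum_j t_j\lambda_j:0\le t_j<1\}$, and set $\mathcal D:=\{x\in(\mathbb C^4)^{\times}:\pi(L(x))\in\Phi\}\cup\{0\}$.

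Parts (1) and (3) should then be routine. For (1): a non-zero $\alpha\in\mathcal O_{F'}$ has all conjugates non-zero, hence lies in $(\mathbb C^4)^{\times}$; for $v\in V$ one has $\pi(L(v\alpha))=L(v)+\pi(L(\alpha))$ because $L(v)\in H$, so the fundamental-domain property for $\Phi$ gives exactly one $v\in V$ with $v\alpha\in\mathcal D$; and $L$ kills the torsion $T=\langle\xi_6\rangle$ (roots of unity have all absolute values equal to $1$), so $\mathcal D$ is $T$-stable and the equivalence ``$u\alpha\in\mathcal D\iff u\in vT$'' follows by writing a unit as $\zeta v$ with $\zeta\in T$, $v\in V$. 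For (3): from $L(x)=\pi(L(x))+\tfrac14\log|N(x)|\,(1,\dots,1)$ we get $|x_i|=e^{\pi(L(x))_i/2}|N(x)|^{1/8}$, and boundedness of $\Phi$ bounds $e^{\pi(L(x))_i/2}$ by an absolute constant on $\mathcal D$; transporting back through the fixed linear change of basis gives $|a_i|\le C_\eta|N_{F'/\mathbb Q}(\alpha)|^{1/8}$ for all $i$.

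The step I expect to be the main obstacle is (2), the $7$-Lipschitz parametrizability of $\partial(\mathcal D(1))$. By (3) the set $\mathcal D(1)$ is bounded, and I would argue that its boundary is contained in the union of the ``outer'' piece $\{\pi(L(x))\in\overline\Phi,\ |N(x)|=1\}$ and the finitely many ``lateral'' pieces $\{\pi(L(x))\in\mathcal F,\ |N(x)|\le1\}$, one for each two-dimensional face $\mathcal F$ of $\Phi$. In polar coordinates $x_j=\rho_j e^{\mathrm i\theta_j}$ the outer piece is the image of $[0,1]^3\times[0,2\pi]^4$ under $(t,\theta)\mapsto$ the point with $\rho_j=\exp\!\big((\textstyle\sum_k t_k\lambda_k)_j/2\big)$ and arguments $\theta_j$, while each lateral piece is the image of $[0,1]^2\times[0,1]\times[0,2\pi]^4$, the first two parameters running over $\mathcal F$, the third being $r=|N(x)|^{1/8}\in[0,1]$, and $\rho_j=\exp\!\big((\text{face point})_j/2\big)\cdot r$. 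These are restrictions of smooth maps to compact boxes, hence Lipschitz, and after the linear identification $\mathbb C^4\cong\mathbb R^8$ they present $\partial(\mathcal D(1))$ as a finite union of images of Lipschitz maps $[0,1]^7\to\mathbb R^8$. The delicate points, and where I would spend the most effort, are verifying that these really are all the boundary pieces --- in particular that the locus where $\mathcal D(1)$ pinches to $0$ as $r\to0$ is already covered (it is, since the lateral parametrizations remain well-defined and Lipschitz at $r=0$) --- and carefully tracking the passage between the Minkowski coordinates, in which the construction is natural, and the integral-basis coordinates, in which the lemma is stated.
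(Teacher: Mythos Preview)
Your proposal is correct and matches the paper's approach exactly: the paper simply cites \cite[Lemma 3.5]{imrn} without further argument, and you begin by making the same identification. Your additional sketch of the underlying construction (the logarithmic map, the fundamental parallelotope $\Phi$ for the unit lattice, and the polar-coordinate parametrization of the boundary pieces of $\mathcal D(1)$) is the standard one from \cite{FIMR} on which \cite[Lemma 3.5]{imrn} is based, and the details you give are accurate.
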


\noindent
In particular, each non-zero principal ideal has exactly $|T|=6$ generators in $\mathcal{D}$.

\section{Counting ideals of squarefull norm}\label{counting_ideals}
Let $M$ be a number field of degree $2n$ over $\mathbb{Q}$, and suppose $\Lambda \subset \mathcal{O}_M$ is a lattice in $M$ of rank $n$ such that $\alpha \Lambda$ is not contained in a proper subfield of $M$ for any $\alpha\in M^{\times}$ (in particular, we must have $n\geq 2$). If $a$ is a positive integer, we can uniquely write $a=q g$ where $q$ is squarefree, $g$ is squarefull and $\gcd(q,g)=1$. We call $g$ the \emph{squarefull part of $a$} and denote it $\mathrm{sqfull}(a)$. The purpose of this section is to estimate the number of elements in $\Lambda$ of bounded size and whose norm onto $\mathbb{Q}$ has large squarefull part. We encounter this problem when estimating sums of type I, and when the rank of the lattice $\Lambda$ is exactly half of the degree of $M$, existing results such as \cite[Lemma 3.1]{Koymans} fall short of giving a non-trivial estimate. This section can be read independently of the rest of the paper. Moreover, the notation is specific to this section and can coincide with the notation used in other places. \\

\noindent
It is necessary to impose that $\alpha \Lambda$ is not contained in a proper subfield of $M$ for any $\alpha \in M^\times$, because otherwise $N_{M /\mathbb{Q} }(\alpha \lambda)$ is squarefull for all $\lambda \in \Lambda $ so the squarefull part $N_{M/ \mathbb{Q}}(\lambda)$ is always large. If we fix a $\mathbb{Z}$-basis $\omega_1 ,..., \omega_n$ for $\Lambda$, this condition is equivalent to $\frac{1}{\omega_1} \Lambda$ not being contained in a proper subfield of $M$. Indeed, let $N$ be the Galois closure of $M/\mathbb{Q}$. If $\alpha \Lambda$ is contained in a proper subfield, then there is $\sigma \in \mathrm{Gal}(N/\mathbb{Q})- \mathrm{Gal}(N/M)$ that fixes $\alpha \omega_1,..., \alpha \omega_n$. In particular, $\sigma (\omega_i / \omega_1)= \sigma( \alpha \omega_i ) / \sigma(\alpha \omega_1)=\omega_i / \omega _1$ for all $i=2,...,n$ so it follows that $\frac{1}{\omega_1} \Lambda$ is contained in the same subfield.\\  

\noindent
Every $\lambda \in \Lambda$ can be written uniquely as $a_1 \omega_1 + \cdots +a_n \omega_n$ where $a_1, ..., a_n \in \mathbb{Z}$, and given positive real numbers $L,Z>0$, the task is to estimate the size of the set
\begin{equation*}
\left\{ \lambda \in \Lambda \, :\, \lvert a_i \rvert \leq L,\, \mathrm{sqfull}(N_{M/\mathbb{Q}}(\lambda)) \geq  Z \right\}.
\end{equation*}
The goal is to improve over the trivial bound by a power saving, and the main result is the following proposition:

\begin{proposition}\label{squarefull}
Let $L$ and $Z$ be positive real numbers. Then there is $\theta >0$ depending only on the chosen $\mathbb{Z}$-basis for $\Lambda$ such that for all $\varepsilon >0$, we have
\begin{equation*}
\# \left\{ \lambda \in \Lambda \,:\, \lvert a_i \rvert \leq L,\, \mathrm{sqfull}(N_{M/\mathbb{Q}}(\lambda)) \geq Z \right\} \ll_\varepsilon L^{n+ \varepsilon } Z^{- \theta}.
\end{equation*}
\end{proposition}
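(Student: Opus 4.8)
The plan is to count the lattice points $\lambda = a_1\omega_1+\cdots+a_n\omega_n$ with $|a_i|\le L$ by bucketing according to the squarefull part $g = \mathrm{sqfull}(N_{M/\mathbb{Q}}(\lambda))$. Write $g = \prod_{i} p_i^{e_i}$ with each $e_i\ge 2$; the key elementary fact is that squarefull integers are sparse, namely $\#\{g \le Y : g \text{ squarefull}\}\ll Y^{1/2}$, and more importantly $\sum_{g>Z,\ g\text{ sqfull}} g^{-s}$ converges with a power saving in $Z$ once $s>1/2$. So it suffices to show that for a fixed squarefull $g$, the number of $\lambda$ in the box with $g \mid N_{M/\mathbb{Q}}(\lambda)$ is $\ll_\varepsilon L^{n-1+\varepsilon}\cdot(\text{something like } g^{\varepsilon})$ plus a main term $\ll L^n/g^{1-\varepsilon}$ — i.e., divisibility of the norm by $g$ cuts down the count by essentially a factor $g$, as one expects heuristically since $N_{M/\mathbb{Q}}(X_1\omega_1+\cdots+X_n\omega_n)$ is a degree-$2n$ form in $n$ variables.

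Concretely, first I would reduce to the case $\omega_1 = 1$ (rescale $\Lambda$ by $1/\omega_1$, which by the discussion preceding the proposition keeps $\Lambda$ from lying in a proper subfield; this changes $L$ and the norm only by bounded multiplicative factors absorbable into the constants and $L^\varepsilon$). Then the homogeneous form $\Phi(a_1,\dots,a_n) := N_{M/\mathbb{Q}}(a_1\omega_1+\cdots+a_n\omega_n) \in \mathbb{Z}[a_1,\dots,a_n]$ is a form of degree $2n$ which is \emph{not} a perfect power and has no repeated factors over $\overline{\mathbb{Q}}$ beyond what the field structure forces — crucially, the hypothesis that $\alpha\Lambda$ lies in no proper subfield guarantees $\Phi$ is not (a constant times) a power of a form in fewer effective variables, and in fact is absolutely irreducible or at worst squarefree. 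The count $\#\{a \bmod g : \Phi(a)\equiv 0 \bmod g\}$ is then multiplicative in prime powers, and for each prime power $p^e \| g$ with $e\ge 2$ one bounds the number of solutions mod $p^e$ by $O_\varepsilon(p^{(n-1)e + e/(2n)+\varepsilon e})$ or similar using that $\Phi$ is nonzero and squarefree mod $p$ for all but finitely many $p$ (Lang–Weil / Deligne-type bounds for the hypersurface $\Phi=0$, or simply elementary estimates on $\gcd$s since $\Phi$ is squarefree). Summing the box count over residue classes mod $g$ gives roughly $L^n/g \cdot p^{O(\varepsilon)} + O(L^{n-1}\cdot g^{O(1)})$; the error term is harmless because we will only sum over $g \le L^{2n/\text{(bounded)}}$, as norms in the box are $\ll L^{2n}$ so $g\le Z$ forces $Z \ll L^{2n}$, bounding the length of the sum.

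The main term contribution is then $\sum_{Z \le g \ll L^{2n},\ g\text{ sqfull}} L^{n+\varepsilon} g^{-1+\varepsilon} \ll L^{n+\varepsilon} Z^{-\theta}$ for a suitable small $\theta>0$, since $\sum_{g\ge Z,\ g\text{ sqfull}} g^{-1+\varepsilon}$ enjoys a power saving $Z^{-1/2+\varepsilon}$ (the Dirichlet series $\sum_{g\text{ sqfull}} g^{-s} = \zeta(2s)\zeta(3s)/\zeta(6s)$ is holomorphic for $\mathrm{Re}(s)>1/3$, in particular at $s$ slightly below $1$, so the tail beyond $Z$ decays like a power of $Z$). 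The error term contribution, being $\ll L^{n-1}$ times a power of $Z \ll L^{2n}$, is also $\ll L^{n-\theta'}$ for small $\theta'$ after we choose the exponents carefully — here one may first prune to $g \le L^{2n(1-c)}$ for small $c$ since if $g$ is very close to $L^{2n}$ there are only $O(L^{n/2})$-ish such $\lambda$ in the box anyway (as $\Phi(a) = g$ forces $a$ into a thin set). I expect the main obstacle to be the bookkeeping that makes the two error sources — the $L^{n-1}$-type lattice-point error from counting in a box modulo $g$, and the tail of the squarefull sum — simultaneously power-saving; this requires balancing the cutoff on $g$ and is exactly the place where the half-degree condition $\mathrm{rank}(\Lambda) = \tfrac12 [M:\mathbb{Q}]$ makes earlier arguments like \cite[Lemma 3.1]{Koymans} fail, so the new input is to exploit the arithmetic sparsity of squarefull integers rather than just a volume/Lipschitz count.
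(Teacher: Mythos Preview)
There is a genuine gap in your approach: the two-term estimate $\#\{\lambda\in[-L,L]^n : g\mid N(\lambda)\}\ll L^n/g^{1-\varepsilon}+L^{n-1+\varepsilon}g^{\varepsilon}$ does not survive the summation over squarefull $g$, because $g$ ranges up to $L^{2n}$ (not merely up to $L$). Even if your congruence count $\rho(g)\ll g^{n-1+\varepsilon}$ is granted, the box-count bound $\rho(g)(2L/g+1)^n$ is only $\asymp L^n/g$ in the regime $g\le L$; for $g>L$ it degenerates to $\rho(g)\asymp g^{n-1}$ (or at best the trivial $L^n$), and summing either of these over the $\asymp L^n$ squarefull integers in $[L,L^{2n}]$ swamps the trivial bound $L^n$. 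Your suggested ``prune to $g\le L^{2n(1-c)}$'' only removes a thin slice near the top and does not touch the bulk range $g\in[L,L^{2n(1-c)}]$.

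More fundamentally, the phenomenon the paper isolates is that the \emph{best possible} uniform lattice bound here is $\#\{\lambda: \mathfrak{g}\mid\lambda\}\ll L^n/N(\mathfrak{g})^{1/2}$ (coming from $\lambda_1(\Lambda\cap\mathfrak{g})\gg N(\mathfrak{g})^{1/(2n)}$ and Minkowski), and the exponent $\tfrac12$ sits exactly at the threshold: $\sum_{g\ge Z,\ g\text{ sqfull}} g^{-1/2}$ diverges logarithmically, so ``sparsity of squarefulls'' alone yields no power saving in $Z$. This is precisely why \cite[Lemma 3.1]{Koymans} fails in the half-rank case and why your proposed fix --- which is to lean harder on that same sparsity --- cannot work. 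The paper's actual argument introduces a dichotomy on the first minimum: when $\lambda_1(\Lambda\cap\mathfrak{g})\ge N(\mathfrak{g})^{(1+\delta)/(2n)}$ one gains the extra $N(\mathfrak{g})^{-\delta/2}$ needed for convergence, and when $\lambda_1$ is small one counts the resulting short vectors directly via Diophantine input (an effective Hilbert irreducibility theorem of Cohen for the equation $F(c)=ay^2$, together with a bound on solutions to norm-form equations in a box coming from the subspace theorem). These ingredients are absent from your proposal.
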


\noindent
If the rank of $\Lambda$ had been strictly greater than $n$, say $\Lambda = \mathbb{Z}\omega_1\oplus\cdots\oplus\mathbb{Z}\omega_k$ where $k>n$, then the proof of \cite[Lemma 3.1]{Koymans} shows that 
\begin{equation}\label{normal_lattice_counting}
\# \{\lambda \in \Lambda\,:\,|a_i|\leq L,\,\mathrm{sqfull}(N_{M/\mathbb{Q}}(\lambda))\geq Z\}\ll_{\varepsilon} L^{k+\varepsilon}Z^{1-\frac{k}{n}}
\end{equation}
for all $\varepsilon>0$. We will also need this bound when estimating sums of type I.\\

\noindent
Proving Proposition \ref{squarefull} requires some preliminary results. The central element of the proof is the \emph{norm polynomial} (or the \emph{norm form}):
\begin{equation*}
F (X_1,...,X_n) := N_{M/\mathbb{Q}}(X_1 \omega_1 + \cdots + X_n \omega_n) \in\mathbb{Z}[X_1,...,X_n].
\end{equation*}
Our condition that $\frac{1}{\omega_1}\Lambda$ is not contained in a proper subfield of $M$ exactly means that $F$ is irreducible over $\mathbb{Q}$ c.f. \cite[Ch. VII Lemma 1B]{schmidt_book}.\\

\noindent
We will eventually have to count the number of solutions $(c_1,...,c_n)$ to the equation $F(c_1,...,c_n)=kg$ where $k$ is small, $g\leq L^{2n}$ is squarefull, and $|c_i|\leq L$ for all $i=1,...,n$. Any squarefull number can be written uniquely as $z^3y^2$ where $z$ is squarefree. When $z$ is small, we use an effective version of the Hilbert irreducibility theorem to count the number of solutions for each fixed $z$. When $z$ is large, the number of possibilities for $y$ is limited, and, for each $y$ and $z$, the solutions to the norm equation $F(c_1,...,c_n)=kz^3y^2$ are very sparse so a simple counting argument gives the desired power saving.\\

Our application of the Hilbert irreducibility theorem is the following lemma:

\begin{lemma}\label{effective_hilbert}
For a non-zero integer $a$ and real number $L>0$, let $M_a(L)$ denote the number of tuples $(x_1 ,..., x_n) \in \left[ -L,L \right]^n \cap \mathbb{Z}^n$ such that $F(x_1 ,..., x_n)=a y^2$ for some integer $y$. Then there are constants $C(F,n,a), D(F,n,a)>0$ depending at most polynomially on the coefficients of $F$, $n$ and $a$ such that $M_a(L ) \leq C(F,n,a) L^{n -\frac{1}{2}} \log L$ for all $L \geq D(F,n,a)$.
\end{lemma}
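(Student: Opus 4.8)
The plan is to bound $M_a(L)$ by counting integer points on the affine variety $V_a \subset \mathbb{A}^{n+1}$ defined by $F(X_1,\dots,X_n) = a Y^2$. Since $F$ is irreducible over $\mathbb{Q}$ (this is guaranteed by the hypothesis on $\Lambda$, as noted before the lemma), the polynomial $F(X_1,\dots,X_n) - aY^2$ is either irreducible over $\mathbb{Q}$, or factors as a product of two conjugate absolutely irreducible factors; in the latter case $a$ times the leading-coefficient data forces $F$ to be a perfect square as a polynomial, which cannot happen since $\deg F = 2n$ and $F$ is irreducible of degree $\geq 4$. In any case the variety $V_a$ has dimension $n-1$ and is not a union of linear spaces, so the result should follow from a uniform bound for integral points on hypersurfaces. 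Concretely, I would invoke an effective form of the dimension-growth / determinant-method bounds (as in the work of Bombieri--Pila, Heath-Brown, and Browning--Heath-Brown--Salberger, or the elementary version in Schmidt's book): the number of integer points of height at most $L$ on an irreducible affine hypersurface of degree $d$ in $\mathbb{A}^{n+1}$ that is not a plane is $O_{d,n}(L^{n-1+\varepsilon})$, and with the extra structure here (one variable $Y$ appears only squared) one can do slightly better.

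The key technical point is obtaining the $L^{n-\tfrac12}\log L$ savings rather than merely $L^{n-1+\varepsilon}$ or the trivial $L^n$. For this I would fix $(x_1,\dots,x_n)\in[-L,L]^n\cap\mathbb{Z}^n$ and note that it is counted only if $F(x_1,\dots,x_n)/a$ is a perfect square (in particular a nonzero one, unless $F$ vanishes, which happens on an $O(L^{n-1})$-size set since $F$ is a nonzero irreducible polynomial). So the count is
\begin{equation*}
M_a(L) \ll L^{n-1} + \#\left\{ \mathbf{x}\in[-L,L]^n\cap\mathbb{Z}^n : F(\mathbf{x})/a \in (\mathbb{Z}_{>0})^2 \right\}.
\end{equation*}
Now I would slice: for each fixed value of $(x_2,\dots,x_n)$ the one-variable polynomial $g(X_1):=F(X_1,x_2,\dots,x_n)$ has degree $2n$ in $X_1$ and, for all but $O(L^{n-2})$ choices of $(x_2,\dots,x_n)$ (those making $g$ have a repeated root pattern or become proportional to a square), it is a separable polynomial that is not $a$ times a square. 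The number of $x_1\in[-L,L]$ with $g(x_1)/a$ a square is then, by a classical result on the density of square values of a one-variable polynomial that is not itself a perfect square times a constant (going back to work quantified effectively by e.g. Bombieri--Schmidt or the hyperelliptic-curve point-count of Heath-Brown / Cutter--Granville--Tucker), bounded by $O_{n}(L^{1/2}\log L)$ with an implied constant that is polynomial in the coefficients of $g$, hence polynomial in those of $F$ and in $a$. Summing over the $O(L^{n-1})$ tuples $(x_2,\dots,x_n)$ and adding the exceptional sets gives the claimed bound.

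The main obstacle is making every constant explicit and polynomial in the data, as the lemma demands — in particular tracking the polynomial dependence on the coefficients of $F$ and on $a$ through whichever point-counting input one uses, and verifying that the exceptional loci (where $F$ vanishes, where a slice polynomial degenerates, where $F/a$ is identically a square along a subvariety) all have size $O(L^{n-1})$ with polynomially-controlled constants. I would therefore lean on a version of the square-sieve or of Heath-Brown's estimate for points on $y^2 = g(x)$ with explicit constants, rather than on the dimension-growth conjecture whose constants are typically ineffective. A secondary subtlety is that one must ensure $g(X_1)$ is genuinely not a constant times a square of a polynomial for most slices; this follows because $F$ is irreducible of degree $2n$ (so not a square), hence its restriction to a generic line is not a square, and the non-generic slices form a proper subvariety cut out by resultant/discriminant conditions of controlled degree.
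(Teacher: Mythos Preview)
Your approach is workable but takes a considerably longer route than the paper. The paper's proof is essentially three lines: set $G(\mathbf{X},Y):=aY^2-F(\mathbf{X})$, observe that $G$ is irreducible over $\mathbb{Q}$ (since $F$ is irreducible and of degree $\geq 4$, any putative factorisation linear in $Y$ forces $F$ to be constant), note that whenever $F(\mathbf{x})=ay^2$ the specialised polynomial $G(\mathbf{x},Y)$ has the rational root $y$ and is therefore reducible, and then invoke Cohen's effective Hilbert irreducibility theorem \cite[Theorem~2.5]{cohen}, which directly bounds the number of such $\mathbf{x}$ in $[-L,L]^n$ by $C\,L^{n-1/2}\log L$ with $C$ depending polynomially on the height and degree of $G$.

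Your slicing argument --- fix $(x_2,\dots,x_n)$, bound the number of $x_1\in[-L,L]$ with $g(x_1)/a$ a perfect square via a large-sieve/square-sieve input, then sum over slices --- is essentially a reconstruction of Cohen's own proof. It would succeed, but two points are left hanging. First, you do not pin down a precise source for the per-slice bound $\#\{|x_1|\leq L: g(x_1)/a\in\mathbb{Z}^2\}\ll L^{1/2}\log L$ with constants polynomial in the coefficients of $g$; the references you gesture at are in the right neighbourhood, but the cleanest place to find exactly this, with the required uniformity, is again Cohen's paper (his one-parameter case, proved via the large sieve). Second, controlling the exceptional slices (where $g$ degenerates or becomes $a$ times a square) requires checking that the relevant locus in $(x_2,\dots,x_n)$-space is a proper subvariety of degree bounded polynomially in the data; this is true but you have only asserted it. Finally, your opening paragraph on dimension-growth bounds is a red herring here, since the $Y$-coordinate ranges up to $\asymp L^n$, not $L$, so those results as stated do not give anything useful.

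In short: no fatal gap, but citing Cohen directly is both shorter and avoids having to track effectivity through a chain of auxiliary results that, in the end, reprove Cohen's theorem.
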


\begin{proof}
Fix $a$, and let $G(\mathbf{X} ,Y) := a Y^2 -F(\mathbf{X} ) \in \mathbb{Z} [ \mathbf{X},Y]$ where $\mathbf{X}  =(X_1 ,..., X_n)$. Since $F$ is irreducible over $\mathbb{Q}$, it follows that $G$ is irreducible over $\mathbb{Q} $. If $\mathbf{x} \in \mathbb{Z}^n$, and $F(\mathbf{x})= a y^2$ for some $y \in \mathbb{Z}$, the polynomial $G(\mathbf{x} ,Y)$ is reducible over $\mathbb{Q}$. The result now follows by an effective version of the Hilbert irreducibility theorem due to Cohen \cite[Theorem 2.5]{cohen}.
\end{proof}

\noindent
We now consider the norm form equation $F(x_1,...,x_n)=a$ where $a\in\mathbb{Z}$.
For non-degenerate lattices $\Lambda$, 
the results in \cite[Ch. VII]{schmidt_book} on the number of solutions to norm form equations 
have effective versions \cite{schmidt_effective}.
In our case, the lattice $\Lambda$ is allowed to be degenerate and the norm equation has infinitely many solutions, making the effective results (ibid.) unavailable. Instead, we use Schmidt's subspace theorem \cite[Theorem 2]{schmidt} to count the number of solutions when they are restricted to lie in a box in $\mathbb{R}^n$. We start with a general lemma.

\begin{lemma}\label{units_are_sparse}
Let $K$ be a number field of degree $m$ over $\mathbb{Q}$ with integral basis $\eta = \left\{ \eta_1,...,\eta_m \right\}$. Write $m = r +2s$ where $r$ and $2s$ are the number of real and complex of embeddings of $K$ respectively. For $L \geq 1$, let $B_{L} $ be a the set of non-zero elements in $\mathcal{O}_K$ of the form $a_1 \eta_1 + \cdots + a_m \eta_m$ with $a_1,..., a_m \in \mathbb{Z}$, and $\lvert a_i \rvert  \leq L$ for all $i=1,...,m$. Then there is a constant $C_{K,\eta}$ depending only on $K$ and $\eta$ such that
\begin{equation*}
\# \left\{ u \in \mathcal{O}_K^{\times}\,:\, u B_{L}\cap B_{L} \neq \emptyset \right\} \leq C_{K,\eta}(\log L)^{r+s-1}.
\end{equation*}  
\end{lemma}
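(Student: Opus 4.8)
The plan is to pass to the logarithmic embedding and reduce the count to a lattice-point count in the hyperplane of rank $r+s-1$. First I would recall the standard setup: let $\sigma_1,\dots,\sigma_r$ be the real embeddings of $K$ and $\sigma_{r+1},\dots,\sigma_{r+s}$ (together with their conjugates) the complex ones, and consider the map $\mathrm{Log}\colon K^\times\to\mathbb{R}^{r+s}$ sending $\alpha$ to $(\log|\sigma_1(\alpha)|,\dots,\log|\sigma_r(\alpha)|,2\log|\sigma_{r+1}(\alpha)|,\dots,2\log|\sigma_{r+s}(\alpha)|)$. By Dirichlet's unit theorem, $\mathrm{Log}(\mathcal{O}_K^\times)$ is a full lattice $\mathcal{L}$ of rank $r+s-1$ inside the trace-zero hyperplane $V_0=\{x\in\mathbb{R}^{r+s}:\sum x_i=0\}$. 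So it suffices to show that the $u\in\mathcal{O}_K^\times$ with $uB_L\cap B_L\neq\emptyset$ have $\mathrm{Log}(u)$ confined to a region of $V_0$ containing $O((\log L)^{r+s-1})$ lattice points of $\mathcal{L}$, since $\mathrm{Log}$ is finite-to-one on units with fibres of size $|T|$ (the number of roots of unity), a constant depending only on $K$.

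Next I would make the size estimate explicit. If $u\in\mathcal{O}_K^\times$ and there exist $\alpha,\beta\in B_L$ with $\alpha=u\beta$, then since every coordinate of $\alpha$ and $\beta$ in the basis $\eta$ is bounded by $L$, we get $|\sigma_i(\alpha)|\le C_{K,\eta}L$ and $|\sigma_i(\beta)|\le C_{K,\eta}L$ for all embeddings $\sigma_i$, for a constant $C_{K,\eta}$ depending only on $K$ and $\eta$ (each $\sigma_i(\eta_j)$ is a fixed algebraic number). On the other hand $\beta$ is a non-zero algebraic integer, so $\prod_i|\sigma_i(\beta)|=|N_{K/\mathbb{Q}}(\beta)|\ge 1$, and hence $|\sigma_i(\beta)|\ge \prod_{j\neq i}|\sigma_j(\beta)|^{-1}\gg_{K,\eta} L^{-(m-1)}$ for each $i$. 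Therefore $|\sigma_i(u)|=|\sigma_i(\alpha)|/|\sigma_i(\beta)|$ lies in an interval $[c_1L^{-m},\,c_2L^{m}]$ with $c_1,c_2$ depending only on $K,\eta$. Taking logarithms, $\mathrm{Log}(u)$ lies in a box of side $O_{K,\eta}(\log L)$ in $\mathbb{R}^{r+s}$, hence in the intersection of that box with $V_0$.

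Finally I would count lattice points: the number of points of a fixed full-rank lattice $\mathcal{L}\subset V_0$ (with $V_0\cong\mathbb{R}^{r+s-1}$) inside a box of side $O(\log L)$ is $O((\log L)^{r+s-1})$ once $\log L\ge 1$, with implied constant depending only on the covolume of $\mathcal{L}$ and on $r+s$, i.e. only on $K$ and $\eta$. Multiplying by the constant fibre size $|T|$ of $\mathrm{Log}$ on units gives the claimed bound $\#\{u\in\mathcal{O}_K^\times:uB_L\cap B_L\neq\emptyset\}\le C_{K,\eta}(\log L)^{r+s-1}$. The only mildly delicate point — and the place I would be most careful — is tracking that all the implied constants depend solely on $K$ and $\eta$ and not on $L$; this is routine but needs the lower bound $|N_{K/\mathbb{Q}}(\beta)|\ge 1$ for the nonzero integer $\beta$ to convert the upper bound on coordinates into two-sided control on $|\sigma_i(u)|$. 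No deep input is needed beyond Dirichlet's unit theorem and an elementary lattice-point count.
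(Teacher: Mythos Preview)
Your proof is correct and in fact more elementary than the paper's. Both arguments pass to the logarithmic embedding and finish with a lattice-point count in the rank-$(r+s-1)$ unit lattice, but they differ in how they confine the relevant units to a box of side $O(\log L)$. The paper first proves the stronger claim that $\mathcal{L}(B_L)\subset[-C\log L,\,C\log L]^{r+s}$, which requires a lower bound $|\sigma_i(\beta)|\ge cL^{-m-1}$ for every nonzero $\beta\in B_L$; to obtain this it invokes Schmidt's subspace theorem. You instead bound $\mathrm{Log}(u)$ directly: writing $u=\alpha/\beta$ with $\alpha,\beta\in B_L$, the upper bound $|\sigma_j(\beta)|\ll L$ for all $j$ combined with $|N_{K/\mathbb{Q}}(\beta)|\ge 1$ already forces $|\sigma_i(\beta)|\gg L^{-(m-1)}$, hence $|\sigma_i(u)|$ lies in $[cL^{-m},CL^{m}]$. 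This norm trick is completely elementary and even yields a slightly better exponent than the paper's $L^{-m-1}$. The paper's route buys the auxiliary fact that $\mathcal{L}(B_L)$ itself sits in a logarithmic box (which is not needed for the lemma as stated), at the cost of a much heavier tool; your route reaches the conclusion with nothing beyond Dirichlet's unit theorem and the integrality of the norm.
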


\begin{proof}
Let $x \mapsto x^{(i)}$ denote the real embeddings of $K$ for $i=1,...,r$ and $s$ pairwise non-conjugate complex embeddings for $i=r+1,...,r+s$. As the statement of the lemma very strongly suggests, we should consider the  logarithmic map $\mathcal{L}: K^\times \rightarrow \mathbb{R}^{r+s}$ defined by
\begin{equation*}
\mathcal{L}(x) :=(\log \lvert x^{(1)} \rvert,..., \log \lvert x^{(r)} \rvert, 2 \log \lvert x^{(r+1)} \rvert,..., 2 \log \lvert x^{(r+s)} \rvert).
\end{equation*}
This is a homomorphism whose kernel is the roots of unity in $K$, and we must estimate the number of units $u \in \mathcal{O}_{K}^\times$ such that $\mathcal{L}(B_{L})\cap (\mathcal{L}(u)+\mathcal{L}(B_{L}))\neq \emptyset$.

The main claim is that there is a constant $C$ (depending only on $K$ and $\eta$) such that $\mathcal{L}(B_{L })\subset \left[ -C \log L, C \log L \right]^{r+s}$. To see why this is sufficient, recall that $\mathcal{L}(\mathcal{O}_{K }^\times )$ is a lattice of rank $r+s-1$. Hence $\mathcal{L}(\mathcal{O}_{K}^\times)$ contains at most $D(\log L)^{r+s-1}$ vectors of sup-norm at most $2 C \log L$ where $D$ is a constant only depending on $K$ and $\eta$ (see for example \cite[Theorem 5.4]{lattice_counting}). If $v \in \mathcal{L}(\mathcal{O}_{K}^\times)$ has sup-norm greater than $2 C \log L$ then clearly, $\mathcal{L}(B_{L})+v$ is disjoint from $\mathcal{L}(B_{L})$. Therefore, we can take $C_{K,\eta}=T D$ where $T$ is number of roots of unity in $K$.

To find $C$ as above, we use Schmidt's subspace theorem in the form of \cite[Theorem 2]{schmidt} to find a constant $c>0$ (depending only on $K$ and $\eta$ such that for all $i=1,...,r+s$, we have
\begin{equation*}
|a_1 \eta_1^{(i)}+\cdots+a_m \eta_m^{(i)}| \geq c L^{-m-1}    
\end{equation*}
for all integers $a_1,...,a_m$ with $0<\max\{|a_1|,...,|a_m|\}\leq L$. We remark that \cite[Theorem 2]{schmidt} is stated in terms of real algebraic numbers, but, from this, one can deduce a version for complex algebraic numbers. We clearly have the upper bound $\ll L$ for the same expression so it is now clear that $\mathcal{L}(B_L)\subset [-C\log L,C\log L]^{r+s}$ for some $C$ only depending on $K$ and $\eta$.
\end{proof} 

We deduce the following lemma:

\begin{lemma}\label{schmidt}
For a non-zero integer $a$ and real number $L>0$, let $N_a(L)$ denote the number of tuples $(x_1 ,..., x_n) \in \left[ -L,L \right]^n \cap \mathbb{Z}^n$ such that $F(x_1 ,..., x_n)=a$. Then for all $\varepsilon>0$, there is a constant $C(\varepsilon , F)$ depending only on $\varepsilon$ and $F$ such that $N_a(L) \leq C(\varepsilon,F) \lvert a \rvert^\varepsilon L^\varepsilon$ for all $a$ and $L$.      
\end{lemma}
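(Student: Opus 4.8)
The plan is to translate the norm form equation $F(x_1,\dots,x_n)=a$ into a statement about principal ideals of $\mathcal{O}_M$ of a fixed norm, and then to control, via Lemma \ref{units_are_sparse}, how many generators of such an ideal can lie inside a box. I would fix once and for all an integral basis $\eta=\{\eta_1,\dots,\eta_{2n}\}$ of $\mathcal{O}_M$, write $\omega_i=\sum_j c_{ij}\eta_j$ with $c_{ij}\in\mathbb{Z}$, and put $C_0:=\max_j\sum_i|c_{ij}|$, a constant depending only on $F$ (note $C_0\geq 1$ since the $\omega_i$ are nonzero). A tuple counted by $N_a(L)$ produces an element $\lambda:=x_1\omega_1+\cdots+x_n\omega_n\in\mathcal{O}_M$ with $N_{M/\mathbb{Q}}(\lambda)=a\neq 0$, so $\lambda\neq 0$, and its coordinates in the basis $\eta$ are bounded by $C_0L$; hence $\lambda$ lies in the set $B_{C_0L}\subset\mathcal{O}_M$ of Lemma \ref{units_are_sparse}, applied with the degree-$2n$ field $M$ in the role of $K$.

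Next I would group these $\lambda$ by the ideal $\lambda\mathcal{O}_M$, which has absolute norm $|a|$. The number of integral ideals of $\mathcal{O}_M$ of a given norm $|a|$ is $\ll_{\varepsilon,M}|a|^{\varepsilon}$, a standard consequence of unique factorisation of ideals (the counting function is dominated coefficientwise by the $[M:\mathbb{Q}]$-fold divisor function). For a fixed ideal $\mathfrak{b}$ of norm $|a|$: if no solution generates $\mathfrak{b}$ there is nothing to count; otherwise, choosing one solution $\lambda_0\in B_{C_0L}$ with $\lambda_0\mathcal{O}_M=\mathfrak{b}$, every other solution generating $\mathfrak{b}$ has the form $u\lambda_0$ for a unit $u\in\mathcal{O}_M^{\times}$ (two generators of a principal ideal differ by a unit), and since $\lambda_0\in B_{C_0L}$ one gets $u\lambda_0\in uB_{C_0L}\cap B_{C_0L}\neq\emptyset$. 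Thus the number of solutions generating $\mathfrak{b}$ is at most $\#\{u\in\mathcal{O}_M^{\times}:uB_{C_0L}\cap B_{C_0L}\neq\emptyset\}$, which for $L\geq 1$ is bounded by Lemma \ref{units_are_sparse} by a constant times $(\log(C_0L))^{2n}$, hence $\ll_{\varepsilon,F}L^{\varepsilon}$.

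Multiplying the ideal count by the per-ideal bound gives $N_a(L)\ll_{\varepsilon,F}|a|^{\varepsilon}L^{\varepsilon}$ for $L\geq 1$, after relabelling $\varepsilon$ and using $|a|\geq 1$; the case $L<1$ is trivial since then the only integer point in the box is the origin, which is not a solution as $a\neq 0$, so $N_a(L)=0$. The only genuinely nontrivial ingredient is Lemma \ref{units_are_sparse} — that is precisely where Schmidt's subspace theorem prevents a unit from translating a box-element back into the box more than $(\log L)^{O(1)}$ times — and since that lemma is already at hand, the present statement is essentially bookkeeping; the one point that warrants care is that every constant involved ($C_0$, the ideal-counting constant, and the constant from Lemma \ref{units_are_sparse}) depends only on $F$ (equivalently on $M$, $\Lambda$ and the auxiliary basis $\eta$) and on $\varepsilon$, so the final bound is uniform in $a$ and $L$ as required.
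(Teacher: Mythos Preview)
Your proof is correct and follows essentially the same approach as the paper: translate solutions into algebraic integers of norm $a$ lying in a box, bound the number of ideals of norm $|a|$ by $|a|^{\varepsilon}$, and for each principal ideal bound the number of generators in the box via Lemma \ref{units_are_sparse}. The only cosmetic difference is that the paper extends $\omega_1,\dots,\omega_n$ to a full $\mathbb{Q}$-basis before passing to the integral basis $\eta$, whereas you pass directly; both yield the same containment in $B_{C_0L}$.
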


\begin{proof}
Fix a non-zero integer $a$. By definition of $F$, we must estimate the number of $\lambda \in \Lambda $ with bounded coefficients such that $N_{M/ \mathbb{Q} }(\lambda)=a$, but it turns out to be enough to count $\alpha \in \mathcal{O}_M$ with bounded coefficients and $N_{M/\mathbb{Q} }(\alpha)=a$. If $N_{M/ \mathbb{Q} }(\alpha)=a$, the ideal generated by $\alpha$ has norm $\lvert a \rvert$. For any $\varepsilon>0$, there are at most $C_\varepsilon \lvert a \rvert^{\varepsilon}$ such ideals. Fix a principal ideal $(\alpha)$ of norm $\lvert a \rvert$. Extend the $\mathbb{Z}$-basis $\omega_1 ,..., \omega_n$ of $\Lambda$ to a $\mathbb{Q}$-basis of $M$ by adding the elements $\omega_{n+1},..., \omega_{2n} \in \mathcal{O}_M$, and choose an integral basis $\eta_1 ,..., \eta_{2n}$ for $\mathcal{O}_M$. Then there is a constant $A>0$ only depending on $\omega_1,...,\omega_{2n}$ and $\eta_1,...,\eta_{2n}$ such that
  \begin{equation*}
\begin{split}
&\left\{ a_1 \omega_{1} +\cdots + a_{2n} \omega_{2n} \in \mathcal{O}_{M}\,|\, a_i \in \mathbb{Q},\, \lvert a_i \rvert \leq L \right\}\\ \subset & \left\{ b_1 \eta_1 + \cdots b_{2n} \eta_{2n} \in \mathcal{O}_{M}\,|\, b_i \in \mathbb{Z},\, \lvert b_i \rvert \leq A L \right\}.
\end{split}
\end{equation*}
Hence if $\alpha = a_1 \eta_1 + \cdots + a_{2n} \eta_{2n}$, with $a_i \in \mathbb{Z}$ and $\lvert a_i \rvert \leq L$, it follows by Lemma \ref{units_are_sparse} that there are at most $C_{\varepsilon,F} L^{\varepsilon}$ units $u \in \mathcal{O}_{K}^\times $ such that the coefficients of $u \alpha$ with respect to $\omega_1 ,..., \omega_{2n}$ are all smaller than $L$ in absolute value. Hence there are at most $C_{\varepsilon} C_{\varepsilon,F} \lvert a \rvert^{\varepsilon} L^{\varepsilon}$
elements $\alpha \in \mathcal{O}_M$ with norm $a$ and coefficients with respect to $\omega_1 ,..., \omega_{2n}$ bounded by $L$. 
\end{proof}

Combining the previous lemmas, we prove the following technical result which is the key new input. 

\begin{lemma}\label{squarefull_values}
For positive real numbers $L$ and $\delta$, let $N_{\delta}(L)$ denote the number of tuples $(c_1 , ..., c_n)$ in $\mathbb{Z}^n $ such that $\lvert c_j \rvert \leq L^{1+ \delta}$ for all $j=1,...,n$ and $F(c_1 , ..., c_n) = kg$  for some $1 \leq k \leq L^{2n\delta}$ and some squarefull integer $g$. Then there exists $\theta >0$ depending only on $\Lambda$ such that for $\delta$ small enough (only in terms of $\Lambda$), we have $\lvert N_{\delta}(L)\rvert \ll L^{n-\theta}$.
\end{lemma}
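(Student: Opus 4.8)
The plan is to carry out the heuristic sketched just before the lemma: write each squarefull $g$ uniquely as $g = z^{3}y^{2}$ with $z$ squarefree and $y \geq 1$, fix a small threshold parameter $\kappa>0$ depending only on $\Lambda$, and split the tuples $c=(c_{1},\dots,c_{n})$ counted by $N_{\delta}(L)$ according to whether $z \leq L^{\kappa}$ or $z > L^{\kappa}$. Throughout I would use the a priori bound $|F(c)| \ll L^{2n(1+\delta)}$, valid since $F$ is homogeneous of degree $2n$ and $|c_{j}| \leq L^{1+\delta}$; thus whenever $F(c)=kg$ with $k \geq 1$ and $g = z^{3}y^{2}$, we have $z^{3}y^{2} \leq |F(c)| \ll L^{2n(1+\delta)}$, so $z \ll L^{2n(1+\delta)/3}$ and, for each fixed $z$, the number of admissible $y$ is $\ll L^{n(1+\delta)}z^{-3/2}$. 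Every tuple counted by $N_{\delta}(L)$ admits such a representation with $k \leq L^{2n\delta}$, and we put it in the first bucket if some representation has $z \leq L^{\kappa}$ and in the second otherwise, so $N_{\delta}(L)$ is bounded by the sum of the two bucket counts.

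\emph{Small $z$.} For $z \leq L^{\kappa}$, fix $z$ (squarefree) and $k \leq L^{2n\delta}$; any tuple in this bucket with this $(k,z)$ satisfies $F(c) = (kz^{3})y^{2}$ for some integer $y$, so the number of such $c$ is at most $M_{kz^{3}}(L^{1+\delta})$ in the notation of Lemma \ref{effective_hilbert}. Since the constants $C(F,n,\cdot)$ and $D(F,n,\cdot)$ there depend only polynomially on their argument, and here $kz^{3} \leq L^{2n\delta+3\kappa}$, we get $M_{kz^{3}}(L^{1+\delta}) \ll_{\Lambda} L^{O(\delta+\kappa)} L^{(1+\delta)(n-1/2)}\log L$, with the hypothesis $L^{1+\delta}\geq D(F,n,kz^{3})$ satisfied once $\delta,\kappa$ are small and $L$ is large (for bounded $L$ one has $N_{\delta}(L)\ll_{\Lambda}1$ trivially). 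Summing over $z \leq L^{\kappa}$ and $k \leq L^{2n\delta}$ gives a contribution $\ll_{\Lambda} L^{(1+\delta)(n-1/2)+O(\delta+\kappa)}\log L$, and the exponent is $n-1/2+O(\delta+\kappa)<n$ once $\delta$ and $\kappa$ are chosen small relative to the explicit $\Lambda$-dependent constants from Cohen's theorem.

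\emph{Large $z$.} For $z > L^{\kappa}$ the effective Hilbert bound is wasteful, since $C(F,n,kz^{3})$ can be a large power of $L$; instead I would keep $y$ explicit. For each triple $(k,z,y)$ with $k \leq L^{2n\delta}$, $z > L^{\kappa}$ squarefree, and $z^{3}y^{2}\ll L^{2n(1+\delta)}$, Lemma \ref{schmidt} bounds the number of $c$ with $F(c)=kz^{3}y^{2}$ by $N_{kz^{3}y^{2}}(L^{1+\delta}) \ll_{\varepsilon} L^{O(\varepsilon)}$, using $kz^{3}y^{2}\ll L^{O(1)}$. The number of such triples is $\ll L^{2n\delta}\sum_{z>L^{\kappa}}L^{n(1+\delta)}z^{-3/2} \ll L^{2n\delta}\cdot L^{n(1+\delta)}\cdot L^{-\kappa/2}=L^{\,n+3n\delta-\kappa/2}$, so the large-$z$ contribution is $\ll_{\varepsilon} L^{\,n+3n\delta-\kappa/2+O(\varepsilon)}$, which beats $L^{n}$ by a fixed power as soon as $\kappa>6n\delta$ and $\varepsilon$ is small.

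Combining the two buckets: first fix $\kappa=\kappa(\Lambda)$ small enough for the small-$z$ estimate, then take $\delta$ below both $\kappa/(6n)$ and the smallness threshold of the small-$z$ estimate, and let $\theta$ be the smaller of the two resulting power savings; all of these depend only on $\Lambda$. I expect the main obstacle to be bookkeeping rather than any single deep step, since the two genuine analytic inputs (Cohen's effective Hilbert irreducibility and the Schmidt subspace theorem) are already isolated in Lemmas \ref{effective_hilbert} and \ref{schmidt}: the work is in checking that the polynomial-in-$a$ dependence of Cohen's constants is harmless precisely on the range $z\leq L^{\kappa}$, that the weak sparsity bound $N_{a}(L')\ll_{\varepsilon}|a|^{\varepsilon}(L')^{\varepsilon}$ survives summation over the $\ll L^{n+3n\delta-\kappa/2}$ triples in the complementary range, and that the two smallness requirements on $\delta$ and $\kappa$ are mutually consistent — which they are, because the large-$z$ constraint $\kappa>6n\delta$ only constrains $\delta$ after $\kappa$ has been fixed.
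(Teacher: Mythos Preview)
Your proposal is correct and follows essentially the same approach as the paper: both split according to the size of $z$ in the unique decomposition $g=z^{3}y^{2}$, apply Lemma~\ref{effective_hilbert} (Cohen's effective Hilbert irreducibility) for small $z$, and use Lemma~\ref{schmidt} together with the bound on the number of admissible $y$ for large $z$. The only cosmetic differences are that the paper fixes $k$ first and sums over it at the end (absorbing the extra $L^{2n\delta}$ by shrinking $\delta$), and in the large-$z$ regime writes $z^{-3/2}<L^{-\eta/4}z^{-5/4}$ to obtain a convergent series rather than directly using $\sum_{z>L^{\kappa}}z^{-3/2}\ll L^{-\kappa/2}$ as you do.
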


\begin{proof}
For a fixed $k \leq L^{2n\delta}$, let $N_{\delta,k}(L)$ denote the set of tuples $(c_1 ,..., c_n) \in \mathbb{Z}^n$ with $\lvert c_j \rvert \leq L^{1+\delta}$, and $F(c_1 ,..., c_n)=k g$ for some squarefull $g$. By making $\delta$ small enough, it is enough to prove $\lvert N_{\delta,k}(L)\rvert \ll L^{n- \theta}$ for each $k$ where $\theta >0$  depends only on $\Lambda$ (and not on $k$). Any squarefull number can be written uniquely as $z^3 y^2$ for positive integers $y$ and $z$ with $z$ squarefree. We partition $N_{\delta,k }(L)$ as $\sqcup_z N_{\delta,k,z}(L)$ where $N_{\delta,k,z}(L)$ is defined in the same way as $N_{\delta,k}(L)$ but with $F(c_1 , ..., c_n)=kz^3 y^2$ for some $y$. Suppose $z \leq L^\eta$ for some small $\eta >0$ that is to be determined. By Lemma \ref{effective_hilbert}, there are constants $C,D>0$ such that $\lvert N_{\delta,k,z}(L)\rvert \leq C(k z^3)^D L^{(1+ \delta)(n-\frac{1}{2})}\log(L^{1+\delta})$ for $L$ large enough in terms of $\Lambda$. Since $k \leq L^{2n \delta}$, we have
\begin{equation*}
\lvert N_{\delta,k,z}(L) \rvert \ll_{\varepsilon , \Lambda} z^{3D} L^{n+(2nD+n-\frac{1}{2})\delta+ (1+ \delta) \varepsilon -\frac{1}{2}}
\end{equation*}
for all $\varepsilon >0$. Hence
\begin{equation*}
\sum_{1 \leq z \leq L^\eta} \lvert N_{\delta,k,z}(L) \rvert \ll_{\varepsilon, \Lambda} L^{n+(3 D+1)\eta +(2n D +n-\frac{1}{2})\delta+(1+ \delta) \varepsilon-\frac{1}{2}}
\end{equation*}
for all $\varepsilon >0$. When $z > L^\eta$, we estimate $\lvert N_{\delta,k,z}(L)\rvert$ in a different way. We must bound the number of tuples $(c_1 , ..., c_n) \in \mathbb{Z}^n$ with $\lvert c_j \rvert \leq  L^{1+\delta}$ such that $F(c_1 , ..., c_n)=k z^3 y^2$ for some $y \in \mathbb{Z}$. For such a tuple $(c_1 ,..., c_n)$, we have $|F(c_1 , ..., c_n )| \leq C_\Lambda L^{2n(1+ \delta)}$ where $C_{\Lambda }>0$ only depends on $\Lambda$. Therefore,
\begin{equation*}
y \ll_{\Lambda } \left( \frac{L^{2n(1+\delta)}}{kz^3} \right)^{\frac{1}{2}} \leq \frac{L^{n(1+ \delta)}}{z^{\frac{3}{2}}} < \frac{L^{n+n \delta-\eta/4}}{z^{\frac{5}{4}}},
\end{equation*}
so the number of possibilities for $y$ is at most this number with implied constants depending only on $\Lambda$. For any integer $a \leq C_\Lambda L^{2n}$ there are $\ll_{\varepsilon, \Lambda} L^{(1+ \delta)\varepsilon}$ tuples $(c_1 , ..., c_n ) \in \mathbb{Z}^n $ such that $F(c_1 , ..., c_n)=a$ by Lemma \ref{schmidt}. It follows that
\begin{equation*}
\sum_{z>L^\eta} \lvert N_{\delta , k, z}(L) \rvert \ll_{\varepsilon, \Lambda} L^{n+n \delta+(1+\delta)\varepsilon -\eta/4} \sum_{z>L^\eta} z^{-\frac{5}{4}} \ll_{\varepsilon , \Lambda } L^{n+ n \delta+(1+\delta)\varepsilon -\eta /4}
\end{equation*}
for all $\varepsilon >0$ since the sum over $z$ converges. Hence
\begin{equation*}
\lvert N_{\delta,k,z}(k,z)\rvert \ll_{\varepsilon, \Lambda} L^{n +(3D+1)\eta +(2n D+ n -\frac{1}{2})\delta +(1+ \delta)\varepsilon-\frac{1}{2}}+L^{n+n \delta +(1 + \delta)\varepsilon -\eta /4}, 
\end{equation*}
and since we can choose $\eta$ independently of $\delta$, we obtain a power-saving when $\delta$ is small enough in terms of $\Lambda$. \end{proof}

We can now prove the main result of this section.

\begin{proof}[Proof of Proposition \ref{squarefull}]
Let $\iota: \Lambda  \hookrightarrow \mathbb{R}^n $ be the embedding $\iota(a_1 \omega_1 +\cdots + a_n \omega_n)=(a_1 , ..., a_n)$ so that we identify $\Lambda $ with $\mathbb{Z}^n$ inside $\mathbb{R}^n$. Let 
\begin{equation*}
S_L:= \left\{(x_1 , ..., x_n) \in \mathbb{R}^n \,:\, \lvert x_i \rvert \leq  L \right\}\subset \mathbb{R}^n
\end{equation*}
be the standard hypercube scaled by $L$. For $\mathfrak{g}$ a non-zero ideal of $\mathcal{O}_M$, we set $\Lambda_{\mathfrak{g}}:= \Lambda \cap \mathfrak{g}$ . The number of interest is therefore bounded by
\begin{equation*}
\sum_{\substack{ Z \leq N_{M/ \mathbb{Q}}(\mathfrak{g}) \leq L^{2n}\\ N_{M/\mathbb{Q} }(\mathfrak{g}) \textrm{ squarefull} 
}} \lvert S_L\cap \iota(\Lambda_{\mathfrak{g}}) \rvert.
\end{equation*}
For a lattice $\Lambda_0 \subset \mathbb{R}^n$, $\lambda_1(\Lambda_0),...,\lambda_n(\Lambda_0 )$ denote the successive minima of $\Lambda_0$, i.e. $\lambda_i(\Lambda_0)$ is the smallest real number $l$ such that $\Lambda_0$ contains $i$ linearly independent vectors of Euclidean length at most $l$. For more on this see \cite[Section 4]{lattice_counting}. By \cite[p. 1741]{Koymans}, there is a constant $C>0$ only depending on the number field $M$ such that $\lambda_1(\Lambda_{\mathfrak{g}}) \geq C N_{M/\mathbb{Q}}(\mathfrak{g})^{\frac{1}{2n}}$ for all $\mathfrak{g}$. Unlike in \cite{Koymans}, this bound is not always strong enough for our purpose. Instead, we show that, often enough, $\lambda_1(\Lambda_{\mathfrak{g}})$ is larger than $N_{M /\mathbb{Q} }(\mathfrak{g})^{(1+\delta)/2n}$ for some small $\delta>0$ that is to be determined. We split the above sum into two parts
\begin{equation*}
\sum_{\substack{
    Z \leq N_{M/\mathbb{Q} }(\mathfrak{g}) \leq L^{2n}\\
    N_{M/\mathbb{Q} }(\mathfrak{g}) \textrm{ squarefull}\\
    \lambda_1(\Lambda_{\mathfrak{g}}) \geq N_{M/\mathbb{Q} }(\mathfrak{g})^{\frac{1+ \delta}{2n}}
}} \lvert S_L \cap \iota(\Lambda_{\mathfrak{g}}) \rvert  + \sum_{\substack{
Z \leq N_{M/\mathbb{Q}}(\mathfrak{g}) \leq L^{2n}\\ N_{M/ \mathbb{Q}}(\mathfrak{g}) \textrm{ squarefull}\\ \lambda_1(\Lambda_{\mathfrak{g}}) < N_{M/\mathbb{Q}}(\mathfrak{g})^{\frac{1+\delta}{2n}} } 
} \lvert S_L \cap \iota(\Lambda_{\mathfrak{g}}) \rvert
\end{equation*}
which we estimate separately. Estimating the first sum will be very similar to the proof of \cite[Lemma 3.1]{Koymans}, but estimating the second sum requires new ideas. We start by briefly explaining how the first sum is estimated. Fix $\mathfrak{g}$ satisfying the conditions in the first sum. By \cite[Theorem 5.4]{lattice_counting} and Minkowski's second theorem \cite[Theorem V, p. 218]{geometry_of_numbers}, the same argument as \cite[p. 1741]{Koymans} gives
\begin{equation*}
\lvert S_L \cap \iota(\Lambda_{\mathfrak{g}}) \rvert \ll \frac{L^n}{N_{M/ \mathbb{Q} }(\mathfrak{g})^{\frac{1+\delta}{2}}}.
\end{equation*}
where the implied constant only depends on $M$. Estimating as in \cite[p. 1742]{Koymans}, we find that the first sum is $ \ll_\varepsilon L^{n+\varepsilon} Z^{-\frac{\delta}{2}}$ for all $\varepsilon >0$ so we have obtained a non-trivial saving.\\

\noindent
To estimate the second sum, we perform a dyadic decomposition:
\begin{equation*}
\sum_{\substack{
    Z \leq N_{M/\mathbb{Q}}(\mathfrak{g}) \leq L^{2n}\\
    N_{M /\mathbb{Q}}(\mathfrak{g})\textrm{ squarefull}\\
    \lambda_1(\Lambda_{\mathfrak{g}}) < N_{M/ \mathbb{Q}}(\mathfrak{g})^{\frac{1+\delta}{2n}}
}} \lvert S_{L} \cap \iota(\Lambda_{\mathfrak{g}}) \rvert = \sum_{\substack{
i \geq 0\\ Z \leq 2^i \leq L^{2n} 
}} \quad \sum_{\substack{
2^i \leq N_{M/\mathbb{Q}}(\mathfrak{g}) < 2^{i+1}\\ N_{M/\mathbb{Q}}(\mathfrak{g}) \textrm{ squarefull}\\ \lambda_1(\Lambda_{\mathfrak{g}})< N_{M/\mathbb{Q}}(\mathfrak{g})^{\frac{1+\delta}{2n}} 
}} \lvert S_L \cap \iota(\Lambda_{\mathfrak{g}}) \rvert.
\end{equation*}
By \cite[p. 1741]{Koymans} we have $\lambda_1(\Lambda_{\mathfrak{g}}) \gg N_{M/ \mathbb{Q}}(\mathfrak{g})^{\frac{1}{2n}}$ for all $\mathfrak{g}$, and the arguments in \cite{Koymans} that comes after this observation show that $\lvert S_L \cap \iota(\Lambda_{\mathfrak{g}}) \rvert \ll L^n / N_{M/\mathbb{Q}}(\mathfrak{g})^{1/2}$ with the implied constant only depending on $M$. Hence the above is bounded by
\begin{equation}\label{dyadic_decomposition}
L^n \sum_{\substack{
i \geq 0\\ Z \leq 2^i \leq L^{2n}}} 2^{-i/2} \sum_{\substack{
2^{i} \leq N_{M/ \mathbb{Q}}(\mathfrak{g})< 2^{i+1}\\ N_{M /\mathbb{Q}}(\mathfrak{g}) \textrm{ squarefull}\\ \lambda_1(\Lambda_{\mathfrak{g}})<N_{M/\mathbb{Q}}(\mathfrak{g})^{\frac{1+ \delta}{2n}} 
}} 1.
\end{equation}
We now claim that for each $i$, we have
\begin{equation*}
 \sum_{\substack{
2^{i} \leq N_{M/ \mathbb{Q}}(\mathfrak{g})< 2^{i+1}\\ N_{M /\mathbb{Q}}(\mathfrak{g}) \textrm{ squarefull}\\ \lambda_1(\Lambda_{\mathfrak{g}})<N_{M/\mathbb{Q}}(\mathfrak{g})^{\frac{1+ \delta}{2n}} 
}} 1 \ll_{\varepsilon} L^{2n \delta + \varepsilon} \lvert  N_{\delta}(2^{\frac{i+1}{2n}}) \rvert
\end{equation*}
for all $\varepsilon  >0$ where the set $N_{\delta}(2 ^{\frac{i+1}{2n}})$ is defined in Lemma \ref{squarefull_values}. Fix $i$, and suppose that $\mathfrak{g}$ is an ideal satisfying the conditions in the inner sum above. Let $\lambda_{\mathfrak{g}} = c_1 \omega_1 +\cdots + c_n \omega_n$ be any non-zero vector in $\Lambda_{\mathfrak{g}}$ of length $\lambda_1(\Lambda_{\mathfrak{g}})$. Clearly, $N_{M/ \mathbb{Q}}( \mathfrak{g})$ divides $N_{M/ \mathbb{Q}}(\lambda_{\mathfrak{g}})$ and, moreover, the quotient is at most $L^{(i+1) \delta}$: Indeed, let $F(X_1 ,..., X_n)$ be the norm polynomial so that $N_{M / \mathbb{Q}}(\lambda_{\mathfrak{g}})= F(c_1 ,..., c_n)$. Then
\begin{equation*}
F(c_1 , ..., c_n) \ll \max_{1 \leq j \leq n} \lvert c_j \rvert^{2n} \ll \lambda_1(\Lambda_{\mathfrak{g}})^{2n} < N_{M/ \mathbb{Q} }(\mathfrak{g})^{1+ \delta} \leq N_{M / \mathbb{Q} }(\mathfrak{g}) L^{(1+i)\delta }.
\end{equation*}
Here we use that the size of each $c_j$ is at most the Euclidean norm of $\iota(\lambda_{\mathfrak{g}})$. Since $\lambda_1(\Lambda_{\mathfrak{g}})< N_{M/\mathbb{Q}}(\mathfrak{g})^{\frac{1+\delta}{2n}}$, and $N_{M /\mathbb{Q}}(\mathfrak{g}) < 2^{i+1}$, it follows that $\lvert c_j \rvert< 2^{\frac{i+1}{2n}}$. Hence $(c_1 , ..., c_n) \in N_{\delta}(2^{\frac{i+1}{2n}})$, so we must show that the fibers of the assignment $\mathfrak{g} \mapsto(c_1 , ..., c_n)$ have size $\ll_{\varepsilon} L^{2n \delta + \varepsilon}$ for all $\varepsilon >0$. Since the quotient of $F(c_1 , ..., c_n)$ and $N_{M/\mathbb{Q} }(\mathfrak{g})$ is at most $L^{(i+1) \delta} \ll L^{2n \delta}$ by the above, there are at most $L^{2n \delta}$ possibilities for $N_{M/\mathbb{Q}}(\mathfrak{g})$. Given $g \in \mathbb{N}$ there are at most $\ll_\varepsilon g^\varepsilon$  ideals of $\mathcal{O}_M$ of norm $g$ for any $\varepsilon >0$ so the number of possibilities for $\mathfrak{g}$ is at most $L^{2n \delta + \varepsilon}$ as desired. By Lemma \ref{squarefull_values}, there is $\theta >0$ depending only on $\Lambda $ such that $\lvert N_{\delta}(2^{\frac{i+1}{2n}}) \rvert \ll_{\Lambda }2^{i/2- i\theta} \ll_\Lambda 2^{i/2}Z^{-\theta}$ since $2^{i} \geq Z$. The number of non-negative integers $i$ such that $Z \leq 2^i \leq L^{2n}$ is clearly bounded by $L^\varepsilon$ for any $\varepsilon>0$ so the expression in \eqref{squarefull_values} is bounded by $L^{n+ \varepsilon}Z^{- \theta}$ for any $\varepsilon >0$, and the proof is complete.
\end{proof}

\begin{remark}
In principle, Cohen's theorem \cite[Theorem 2.5]{cohen} allows us to make the exponent $\theta$ in the above proposition explicit in terms of the lattice $\Lambda$, but we did not find it very enlightening to do so.
\end{remark}

\section{Sums of type I}

In this section, we prove Proposition \ref{sums_of_type_I}. Fix a non-zero integral ideal $\mathfrak{M}$ and an element $\mu \in(\mathcal{O}_{F'}/ \mathfrak{M} \mathcal{O}_{F'} )^\times$. For each $i \in \left\{ 1,..., h_0 \right\}$, our task is to estimate
\begin{equation} \label{sumaftype1}
\sum_{\substack{
N(\mathfrak{a}) \leq X\\ 
\mathfrak{m} \mid \mathfrak{a}}} r_i(\mathfrak{a} , \mathfrak{M}, \mu )s_{\mathfrak{a}}
\end{equation}
when $\mathfrak{m}$ is a non-zero integral ideal of $\mathcal{O}_{F'}$. The first step is to reduce \eqref{sumaftype1} to a sum over principal ideals with generators having a fixed value modulo a modulus that we now define. Recall that $\mathfrak{p}_1 ,..., \mathfrak{p}_{h_0}$ denote prime ideals of degree $1$ over $\mathbb{Q}$ coprime to $3$ representing the subgroup $H_0 \leq H_{F'}(\mathbf{m} )$. Here the modulus $\mathbf{m}$ defined in Section \ref{the_spin_symbol} should not be confused with the ideal $\mathfrak{m}$ in the above sum. If $h$ denotes the class number of $K'$, we also fix prime ideals $\mathfrak{P}_1 ,..., \mathfrak{P}_h  $ of degree $1$ over $\mathbb{Q}$, coprime to $3$, not lying over any of $\mathfrak{p}_1 ,..., \mathfrak{p}_{h_0}$ and representing the ideal classes of $K'$. We now define
\begin{equation*}
F_0 := 2^4 \cdot 3^{3h+3} \cdot \Delta(K'/ \mathbb{Q}) \cdot N_{F'/ \mathbb{Q}}(\mathfrak{M} ) \cdot \prod_{i=1}^{h_0} N_{F'/\mathbb{Q} '}(\mathfrak{p}_i) \cdot  \prod_{j=1}^{h} N_{K'/\mathbb{Q}}(\mathfrak{P}_j )
\end{equation*}
where $\Delta(K'/ \mathbb{Q})$ denotes the absolute discriminant of the extension $K'/\mathbb{Q}$. 

\begin{lemma}\label{reduce_to_principal_ideals}
Fix an index $ i \in \left\{ 1,..., h_0 \right\}$ and $\rho \in \mathcal{O}_{F'}$. Suppose $\mathfrak{a} \in H_0 $ and $\mathfrak{a}\mathfrak{p}_i = (\alpha)$ for some $\alpha$ such that $\alpha \equiv \rho \bmod F_0$. Then 
\[
s_{\mathfrak{a}} = \gamma_{\rho,i} s_{(\alpha)}
\]
for some $\gamma _{\rho,i} \in \{1, \zeta_3, \zeta_3^2, 0 \}$ depending only on $\rho$ and $i$.
\end{lemma}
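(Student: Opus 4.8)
The plan is to compare $s_{\mathfrak{a}}$ with $s_{(\alpha)}$ head-on: write both in terms of $\alpha$, and then account for the discrepancy using multiplicativity of the cubic residue symbol in \emph{both} of its arguments together with cubic reciprocity (Proposition~\ref{cubic_reciprocity}). First I would fix, depending only on $i$, a generator $\pi_i\in\mathcal{O}_F$ of the ideal $N_{F'/F}(\mathfrak{p}_i)$ (principal, since $F$ has class number $1$), and put $B_i:=N_{K/F}(\sigma(\pi_i))\in\mathcal{O}_F$; here I take $\sigma$ to be a double transposition in $\Gal(K/\mathbb{Q})\cong A_4$, which is allowed because $\Gal(K/F)$ is a Sylow $3$-subgroup and so contains no double transpositions, and because by Lemma~\ref{canonical} the spin symbol does not depend on the choice of $\sigma$. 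From $N_{F'/F}(\mathfrak{a})\cdot N_{F'/F}(\mathfrak{p}_i)=\bigl(N_{F'/F}(\alpha)\bigr)$ one sees that $\gamma:=N_{F'/F}(\alpha)/\pi_i$ lies in $\mathcal{O}_F$ and generates $N_{F'/F}(\mathfrak{a})$, so, using that $s_{\mathfrak{a}}$ is independent of the chosen generator (a consequence of Lemma~\ref{units_are_cubes}), one gets $s_{\mathfrak{a}}=\bigl(\tfrac{A/B_i}{\mathfrak{a}}\bigr)_{3,F'}$ and $s_{(\alpha)}=\bigl(\tfrac{A}{(\alpha)}\bigr)_{3,F'}$, where $A:=N_{K/F}\bigl(\sigma(N_{F'/F}(\alpha))\bigr)\in\mathcal{O}_F$ and $A/B_i=N_{K/F}(\sigma(\gamma))\in\mathcal{O}_F$.

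Next, using multiplicativity in both arguments and $\mathfrak{a}=(\alpha)\mathfrak{p}_i^{-1}$, in the non-degenerate case (where the symbols involved do not vanish) one obtains
\[
s_{\mathfrak{a}}=s_{(\alpha)}\cdot\Bigl(\tfrac{A}{\mathfrak{p}_i}\Bigr)_{3,F'}^{-1}\Bigl(\tfrac{B_i}{(\alpha)}\Bigr)_{3,F'}^{-1}\Bigl(\tfrac{B_i}{\mathfrak{p}_i}\Bigr)_{3,F'},
\]
and it remains to check that each of the three extra factors depends only on $\rho$ and $i$; then $\gamma_{\rho,i}$ is defined as the product of these (inverted) factors. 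The factor $\bigl(\tfrac{B_i}{\mathfrak{p}_i}\bigr)_{3,F'}$ depends on $i$ alone, and it is nonzero: since $\sigma$ is a double transposition and $\mathfrak{p}_i$ is inert in $K'$, the ideal $N_{F'/F}(\mathfrak{p}_i)$ is inert in $K/F$ and a short orbit computation gives $B_i\mathcal{O}_F=$ the degree-$3$ prime of $\mathcal{O}_F$ above $N(\mathfrak{p}_i)$, which is prime to $\mathfrak{p}_i$. The factor $\bigl(\tfrac{A}{\mathfrak{p}_i}\bigr)_{3,F'}$ depends only on $A\bmod\mathfrak{p}_i$; writing $A=\sigma_1(N_{F'/F}(\alpha))\sigma_2(N_{F'/F}(\alpha))\sigma_3(N_{F'/F}(\alpha))$ via Lemma~\ref{canonical} and reducing in $\mathcal{O}_{K'}$ modulo a prime above $\mathfrak{p}_i$ shows $A\bmod\mathfrak{p}_i$ is determined by $\alpha$ modulo the primes of $\mathcal{O}_{F'}$ above the rational prime $N(\mathfrak{p}_i)$, hence by $\rho$ since $N(\mathfrak{p}_i)\mid F_0$. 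Finally $\bigl(\tfrac{B_i}{(\alpha)}\bigr)_{3,F'}=\bigl(\tfrac{B_i}{\alpha}\bigr)_{3,F'}$; cubic reciprocity over $F'$ flips this to $\mu\bigl(\tfrac{\alpha}{B_i}\bigr)_{3,F'}$ with $\mu\in\{1,\zeta_3,\zeta_3^2\}$ depending only on $\alpha$ and $B_i$ modulo $27$ — hence only on $\rho$ (as $27\mid F_0$) and $i$ — while $\bigl(\tfrac{\alpha}{B_i}\bigr)_{3,F'}$ depends only on $\alpha\bmod B_i\mathcal{O}_{F'}$; because $\mathfrak{p}_i$ is inert in $K'$ this ideal is squarefree and supported above $N(\mathfrak{p}_i)\mid F_0$, so again $\alpha\bmod B_i\mathcal{O}_{F'}$ is controlled by $\rho$.

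I expect the main obstacle to be the degenerate cases — exactly those in which $\rho$ is divisible by one of the two primes of $\mathcal{O}_{F'}$ of degree $>1$ above $N(\mathfrak{p}_i)$, so that some of the symbols above vanish. Whether one is in this situation is itself determined by $\rho$ and $i$, so there one sets $\gamma_{\rho,i}:=0$, and the task becomes to verify that \emph{both} $s_{\mathfrak{a}}=0$ and $s_{(\alpha)}=0$. This reduces to a valuation computation at the primes above $N(\mathfrak{p}_i)$: using that $N_{F'/F}(\mathfrak{p}_i)$ is inert in $K/F$ and $\sigma$ is a double transposition, one shows that if such a degree-$3$ prime $\mathfrak{P}$ divides $\alpha$ then $N(\mathfrak{p}_i)$ divides $A$ (killing $s_{(\alpha)}$ because $\mathfrak{p}_i\mid(\alpha)$), while $\mathfrak{P}$ divides $A/B_i=N_{K/F}(\sigma(\gamma))$ (killing $s_{\mathfrak{a}}$ because $\mathfrak{P}\mid\mathfrak{a}$). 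Once these vanishing cases are handled, the displayed identity gives $s_{\mathfrak{a}}=\gamma_{\rho,i}s_{(\alpha)}$ with $\gamma_{\rho,i}$ depending only on $\rho$ and $i$. One subsidiary point to settle along the way is that the representatives $\mathfrak{p}_1,\dots,\mathfrak{p}_{h_0}$ of the classes of $H_0$ may indeed be chosen degree $1$, coprime to $3$, \emph{and} inert in $K'$, which follows from Lemma~\ref{units_are_cubes_modulo_primes} together with a Chebotarev argument in $K'(\mathbf{m})$.
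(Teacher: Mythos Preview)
Your multiplicative decomposition and the treatment of the three correction factors follow the same lines as the paper. The genuine gap is the ``subsidiary point'' at the end: the representatives $\mathfrak{p}_1,\dots,\mathfrak{p}_{h_0}$ \emph{cannot} all be chosen inert in $K'$. By condition~(3) on the modulus $\mathbf{m}$ one has $\mathfrak{f}(K'/F')\mid\mathbf{m}$, hence $K'\subset F'(\mathbf{m})$, and the Artin map gives a surjection $H_{F'}(\mathbf{m})\twoheadrightarrow\Gal(K'/F')\cong\mathbb{Z}/3\mathbb{Z}$ under which a prime is inert in $K'$ precisely when its class maps to a generator. The trivial class of $H_0$ maps to the identity, so every prime representing it \emph{splits} in $K'$; your Chebotarev argument therefore cannot furnish an inert representative for that class (nor for a third of the classes of $H_0$).

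The paper bypasses this by never invoking the prime factorisation of $B_i$. For the factor $\bigl(\tfrac{B_i}{\alpha}\bigr)_{3,F'}$ it applies the first clause of Proposition~\ref{cubic_reciprocity} directly (dependence on $\alpha\bmod 27B_i$) rather than flipping, and then proves $27B_i\mid F_0$ via a short computation with the cubic subfield $L\subset K$ fixed by the Klein four-group $V_4$: Lemma~\ref{canonical} gives $B_i=N_{K/L}(\pi_i)/\pi_i$; since $V_4\trianglelefteq A_4$ and $\pi_i\in F$, the element $N_{K/L}(\pi_i)$ is invariant under $\Gal(K/F)$ as well, hence lies in $\mathbb{Q}$; comparing $N_{K/\mathbb{Q}}(\pi_i)$ computed through $F$ and through $L$ then yields $N_{K/L}(\pi_i)=\pm N(\mathfrak{p}_i)$, whence $B_i\mid N(\mathfrak{p}_i)$ in $\mathcal{O}_F$ and so $27B_i\mid F_0$. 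This works for any degree-one $\mathfrak{p}_i$ coprime to $3$, and once $B_i\mid N(\mathfrak{p}_i)$ is in hand, the rest of your argument (nonvanishing of $\bigl(\tfrac{B_i}{\mathfrak{p}_i}\bigr)$, control of $\bigl(\tfrac{\alpha}{B_i}\bigr)$, and the degenerate-case bookkeeping) goes through without any inertness hypothesis.
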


\begin{proof}
Since $(\alpha) = \mathfrak{a} \mathfrak{p}_i \in I_0$, $s_{(\alpha)}$ is defined. Let $N_{F'/F}(\mathfrak{p}_i)=(\pi_i)$. By using multiplicativity of the cubic residue symbol, we find that
\begin{equation*}
s_{(\alpha)}=s_{\mathfrak{a}} \left( \frac{N_{K/F}(\sigma(\pi_i))}{\mathfrak{a}} \right)_{3,F'} \left(\frac{N_{K/F}(\sigma(N_{F'/F}(\alpha)))}{\mathfrak{p}_i} \right)_{3,F'}.
\end{equation*}
Since $\mathfrak{p}_i$ divides the rational integer $F_0$ in $\mathcal{O}_{F'}$, it follows that the third factor is determined by $\alpha$ modulo $F_0$. The middle factor can be written as
\begin{equation*}
\left( \frac{N_{K/F}(\sigma(\pi_i))}{\alpha} \right)_{3,F'} s_{\mathfrak{p}_i}^{-1},
\end{equation*}
and we would like to show that the first of these factors depends only on $\rho$ and $i$. By Proposition \ref{cubic_reciprocity}, it depends only on $\alpha$ modulo $27 N_{K/F}(\sigma(\pi_i))$. By Lemma \ref{canonical},
\begin{equation*}
N_{K/F}(\sigma(\pi_i))= \sigma_1(\pi_i)\sigma_2(\pi_i) \sigma_3(\pi_i)=N_{K/L}(\pi_i)/\pi_i 
\end{equation*}
where $\sigma_1, \sigma_2 ,\sigma_3$ are the three double transpositions, and $L$ is the cubic subfield of $K$ fixed by $\sigma_1 , \sigma_2 , \sigma_3$. Hence the above cubic residue symbol depends only on $\alpha$ modulo $27 N_{K/L}(\pi_i)$. The number $N_{K/L}(\pi_i)$ is a rational integer since it is invariant under $\mathrm{Gal}(K/F)$ (as $\pi_i \in F$ and $\left\{ 1,\sigma_1 , \sigma_2 ,\sigma_3 \right\}$ is normal in $\mathrm{Gal}(K/\mathbb{Q})$). Using transitivity of norms,
\begin{equation*}
N_{F/ \mathbb{Q} }(\pi_i)^{\left[ K:F \right]}=N_{K/\mathbb{Q} }(\pi_i)= N_{L/\mathbb{Q} }(N_{K/L}(\pi_i))=N_{K/L}(\pi_i)^{\left[ L:\mathbb{Q} \right]}.
\end{equation*}
Since $\left[ K:F \right]= \left[ L:\mathbb{Q} \right]=3$, $N_{K/L}( \pi_i) = N_{F/ \mathbb{Q} }( \pi_i) = \pm N(\mathfrak{p}_i)$. Hence it only depends on $\alpha$ modulo $27 N(\mathfrak{p}_i)$. This number divides $F_0$ so we are done.     
\end{proof}

Next, we need an integral basis $\eta = \{\eta_1,..., \eta_8\}$ for $\mathcal{O}_{F'}$ with $\eta_1 =1$. It will be convenient to make our choice more specific. Fix an integral basis $1,\theta_2, \theta_3 , \theta_4$ for $\mathcal{O}_{F}$. Since $1, \zeta_3$ is an integral basis for $\mathcal{O}_{\mathbb{Q}(\zeta_3)}$, and the discriminants of $\mathbb{Q}(\zeta_3)$ and $F$ are coprime, it follows that
\begin{equation*}
1,\, \zeta_3,\, \theta_2,\, \theta_2 \zeta_3,\,\theta_3,\, \zeta_3 \theta_3 ,\, \theta_4,\, \zeta_3 \theta_4 
\end{equation*}
is an integral basis for $\mathcal{O}_{F'}$, which we label $\eta_1 ,..., \eta_8$. Let $\mathcal{D}$ be the fundamental domain given by Lemma \ref{fundamental_domain}. By the same argument as in \cite[p. 15]{peter} using Lemma \ref{reduce_to_principal_ideals}, it suffices to estimate, for a fixed $\rho$ modulo $F_0$, the sum
\[
A(X, \rho) =\sum_{\substack{ \alpha \in \mathcal{D}(X) \\ \alpha \equiv \rho \bmod F_0 \\ \alpha \equiv 0 \bmod \mathfrak{m}}} s_{(\alpha)}.
\]
We now manipulate $A(X,\rho)$. To this end, we use the expression for the spin symbol in \eqref{alternative_expression}. There is a decomposition $\mathcal{O}_{F'}=\mathbb{Z} \oplus \mathbb{M}$ where $\mathbb{M}= \mathbb{Z} \eta_2 \oplus \cdots \oplus \mathbb{Z} \eta_8$ so every $\alpha \in \mathcal{O}_{F'}$ can be written uniquely as $a+\beta$ where $a \in \mathbb{Z}$ and $\beta \in \mathbb{M}$.

We write $x\mapsto \overline{x}$ for the unique automorphism of $K(\zeta_3)$ that fixes $K$ and satisfies $\overline{\zeta_3}= \zeta_3^{-1}$. Any $\sigma \in \Gal(K/\mathbb{Q})$ has unique extension to $\Gal(K'/\mathbb{Q})$ which commutes with $x\mapsto \overline{x}$ which we also denote by $\sigma$. Moreover, for each $\sigma \in \mathrm{Gal}(K/ \mathbb{Q})$, we write $\overline{\sigma}$ for the automorphism $x\mapsto\sigma(\overline{x})$. Thus
\begin{equation*}
\begin{split}
  s_{(\alpha)}&= \left( \frac{\sigma(\alpha\overline{\alpha})}{ \alpha} \right)_{3,K'}\\
              &= \left( \frac{\sigma(\beta)+a}{a+\beta} \right)_{3,K'} \left( \frac{\overline{\sigma}(\beta)+ a}{a+\beta} \right)_{3,K'}\\
             & = \left( \frac{\sigma(\beta)-\beta}{a+\beta} \right)_{3,K'} \left( \frac{\overline{\sigma}(\beta)-\beta}{a+\beta} \right)_{3,K'}.
\end{split}
\end{equation*}

\noindent
We now consider $\beta$ to be fixed and let $a$ vary. Moreover, for the rest of the paper, we fix $\sigma$ to be one of the three double transpositions in $\mathrm{Gal}(K'/\mathbb{Q}(\zeta_3))$, and let $E$ be the fixed field of $\sigma$ (so $[K':E]=2$). If $\sigma(\beta)-\beta=0$ or $\overline{\sigma}(\beta)-\beta=0$ then $s_{(\alpha)}=0$ so we may remove these $\beta$ from consideration and assume $\sigma(\beta)-\beta,\sigma(\overline{\beta})-\beta\neq0$. By the same argument as in \cite[p. 726]{FIMR}, we have

\begin{lemma}\label{c_and_c_prime}
Assume $\alpha\equiv \rho\pmod{F_0}$, and write $\alpha=a+\beta$ for some $a\in\mathbb{Z}$ and $\beta\in\mathbb{M}$. Let $\mathfrak{c}$ and $\mathfrak{c}'$ be the greatest divisors of $\sigma(\beta)-\beta$ and $\overline{\sigma}(\beta)-\beta$ respectively coprime to $F_0$. Then
\begin{equation*}
\left( \frac{\sigma(\beta)-\beta}{a + \beta} \right)_{3,K'}=\mu \left( \frac{a+ \beta}{\mathfrak{c}} \right)_{3,K'}\textrm{  and  }\left(\frac{\sigma(\overline{\beta})-\beta}{a+\beta}\right)_{3,K'}=\mu'\left(\frac{a+\beta}{\mathfrak{c}'}\right)_{3,K'}
\end{equation*}
where $\mu,\mu' \in \left\{ 1,\zeta_3 , \zeta_3^2,0 \right\}$ depend on $\rho$ and $\beta$ but not on $a$. 
\end{lemma}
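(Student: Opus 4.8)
The plan is to adapt the argument of \cite[p.~726]{FIMR}. The two features absent from \cite{FIMR} are that $K'$ is not a principal ideal domain and that $\sigma(\beta)-\beta$ need not be coprime to $3$, so the ``bad part'' of $\sigma(\beta)-\beta$ must be isolated at the level of ideals and the reciprocity step routed through Hilbert symbols. I would treat the factor attached to $\sigma(\beta)-\beta$; the one attached to $\overline{\sigma}(\beta)-\beta$ is identical and produces $\mu'$. We may assume $\rho$ is coprime to $3$, since otherwise $a+\beta$ is not coprime to $3$, the cubic residue symbol is undefined, and $s_{(\alpha)}=0$ (take $\mu=\mu'=0$); recall also $\sigma(\beta)-\beta\neq 0$ is already assumed. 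The first step is to factor the ideal $(\sigma(\beta)-\beta)\mathcal{O}_{K'}=\mathfrak{c}\,\mathfrak{d}$, where $\mathfrak{c}$ is coprime to $F_0$ and every prime of $\mathfrak{d}$ divides $F_0$, and then to split $\mathfrak{d}=\mathfrak{d}_3\mathfrak{d}_{3'}$ into its $3$-part and its prime-to-$3$ part.

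Next I would dispose of the degenerate cases. Every prime of $\mathfrak{d}$ divides $F_0$, and since $a+\beta\equiv\rho\pmod{F_0}$ we get $a+\beta\equiv\rho$ modulo each such prime; hence whether $\gcd(a+\beta,\mathfrak{d})$ is non-trivial depends only on $\rho$ and $\beta$. If it is non-trivial, then $\left(\frac{\sigma(\beta)-\beta}{a+\beta}\right)_{3,K'}=0$ because $\mathfrak{d}\mid\sigma(\beta)-\beta$, and we take $\mu=0$. If instead $\gcd(a+\beta,\mathfrak{c})$ is non-trivial, then $\left(\frac{a+\beta}{\mathfrak{c}}\right)_{3,K'}=0$ and also $\left(\frac{\sigma(\beta)-\beta}{a+\beta}\right)_{3,K'}=0$, so the claimed identity holds for any value of $\mu$; thus it suffices to exhibit a single $\mu$, depending only on $\rho$ and $\beta$, that is correct whenever $a+\beta$ is coprime to $3(\sigma(\beta)-\beta)$.

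In that remaining case, the computation carried out in the proof of Proposition~\ref{cubic_reciprocity} --- which uses only that the lower argument $a+\beta$ is coprime to $3$, not the upper one --- gives
\[
\left(\frac{\sigma(\beta)-\beta}{a+\beta}\right)_{3,K'}
=\left[\prod_{\mathfrak{P}\mid 3}\left(\frac{\sigma(\beta)-\beta\,,\,a+\beta}{\mathfrak{P}}\right)\right]\left(\frac{a+\beta}{\mathfrak{c}}\right)_{3,K'}\left(\frac{a+\beta}{\mathfrak{d}_{3'}}\right)_{3,K'},
\]
where the product runs over the finitely many primes of $K'$ above $3$ and we used that the prime-to-$3$ part of $(\sigma(\beta)-\beta)\mathcal{O}_{K'}$ is $\mathfrak{c}\mathfrak{d}_{3'}$ together with multiplicativity of the cubic residue symbol in the modulus. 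By the Hensel argument already used in that proof (a unit $\equiv 1\pmod{27}$ at a prime above $3$ is a cube), the bracketed factor depends only on $\sigma(\beta)-\beta$ and $a+\beta$ modulo $27$; since $27\mid F_0$, the residue of $a+\beta$ modulo $27$ is fixed by $\rho$, while $\beta$ is fixed, so the bracket is a cube root of unity depending only on $\rho$ and $\beta$. The factor $\left(\frac{a+\beta}{\mathfrak{d}_{3'}}\right)_{3,K'}$ depends only on $a+\beta$ modulo the primes dividing $\mathfrak{d}_{3'}$, all of which divide $F_0$, hence again only on $\rho$ and $\beta$. Setting $\mu$ equal to the product of the bracket with $\left(\frac{a+\beta}{\mathfrak{d}_{3'}}\right)_{3,K'}$ (and $\mu=0$ in the two degenerate cases above) yields $\left(\frac{\sigma(\beta)-\beta}{a+\beta}\right)_{3,K'}=\mu\left(\frac{a+\beta}{\mathfrak{c}}\right)_{3,K'}$ with $\mu\in\{1,\zeta_3,\zeta_3^2,0\}$ independent of $a$, and verbatim the same reasoning with $\overline{\sigma}(\beta)-\beta$ in place of $\sigma(\beta)-\beta$ produces $\mathfrak{c}'$ and $\mu'$.

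The main obstacle is the bookkeeping forced by the two new features: because $\sigma(\beta)-\beta$ cannot be factored into an element times a unit in a convenient way over $K'$, one must push the reciprocity identity through the local Hilbert symbols and keep careful track of exactly when $a+\beta$ meets $\mathfrak{c}$ or $\mathfrak{d}$, so as to be certain that the constant $\mu$ genuinely does not depend on $a$.
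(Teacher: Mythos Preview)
Your argument is correct. The key identity you derive from the Hilbert-symbol computation in Proposition~\ref{cubic_reciprocity},
\[
\left(\frac{\sigma(\beta)-\beta}{a+\beta}\right)_{3,K'}
=\Bigl[\prod_{\mathfrak{P}\mid 3}\Bigl(\tfrac{\sigma(\beta)-\beta\,,\,a+\beta}{\mathfrak{P}}\Bigr)\Bigr]\left(\frac{a+\beta}{\mathfrak{c}}\right)_{3,K'}\left(\frac{a+\beta}{\mathfrak{d}_{3'}}\right)_{3,K'},
\]
is valid (only coprimality of the lower entry to $3$ is used), and your analysis of why each extra factor depends only on $\rho$ and $\beta$ is sound: the bracket is fixed modulo $27\mid F_0$ via Hensel, and $\bigl(\tfrac{a+\beta}{\mathfrak{d}_{3'}}\bigr)_{3,K'}$ depends only on $a+\beta$ modulo the radical of $\mathfrak{d}_{3'}$, every prime of which divides $F_0$. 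Your handling of the degenerate cases is also fine, since whether $\gcd(a+\beta,\mathfrak{d})\neq 1$ is determined by $\rho$ and $\beta$, while $\gcd(a+\beta,\mathfrak{c})\neq 1$ forces both sides to vanish.

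Your route, however, is genuinely different from the paper's. The paper follows \cite[p.~726]{FIMR} at the \emph{element} level: one introduces the fixed prime representatives $\mathfrak{P}_1,\dots,\mathfrak{P}_h$ of the class group of $K'$ to make the relevant ideals principal, and then applies cubic reciprocity to elements. Each such replacement contributes a factor controlled modulo $27$ and modulo the $N(\mathfrak{P}_j)$, which is why the paper builds $3^{3h+3}$ and $\prod_j N(\mathfrak{P}_j)$ into $F_0$. Your approach bypasses the class group entirely by staying with local Hilbert symbols, so you never need the $\mathfrak{P}_j$ and only use $27\mid F_0$. What the paper's approach buys is that it is a direct transcription of an argument the reader can find verbatim in \cite{FIMR}; what your approach buys is a cleaner bookkeeping that does not depend on $h$ or on the auxiliary primes $\mathfrak{P}_j$ at all.
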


The proof uses the representatives $\mathfrak{P}_1 ,..., \mathfrak{P}_h  $ for ideal class group of $K'$ and the fact that $3^{3 h +3} \mid F_0$. We can now write
\begin{equation*}
A(X, \rho)= \sum_{\beta \in \mathbb{M}} \mu_{\rho, \beta} T(X, \rho, \beta)
\end{equation*}
where $\mu_{\rho, \beta} \in \left\{ 1, \zeta_3 , \zeta_3^2,0 \right\}$ depends on $\rho$ and $\beta$, and where
\begin{equation*}
T(X,\rho,\beta)= \sum_{\substack{
    a \in \mathbb{Z}\\
    a+ \beta \in \mathcal{D}(X)\\
    a+ \beta \equiv 0 \bmod{\mathfrak{m}}\\
    a+ \beta \equiv \rho \bmod{F_0}
}}\left( \frac{a+ \beta}{\mathfrak{c}} \right)_{3,K'} \left(\frac{a+\beta}{\mathfrak{c}'}\right)_{3,K'} 
\end{equation*}
where $\mathfrak{c}$ and $\mathfrak{c}'$ are as in Lemma \ref{c_and_c_prime}.  This has the same shape as the last equation in \cite[p. 14]{peter}, and we now perform a field lowering argument as in \cite[p. 15]{peter}. Unlike in this reference, the argument will not make our result unconditional. Instead, it allows to assume conjecture $C_{12}$ instead of Conjecture $C_{24}$. It will also play a much more important role in making the use of Proposition \ref{squarefull} possible. Without field lowering, we would have to count elements in a rank $6$ lattice inside a degree $24$ number field which seems completely out of reach with current methods. The reason that field lowering plays a new role in our argument is that the spin symbol is defined over a non-Galois extension.

We start by proving a result similar to \cite[Lemma 2.4]{peter}.

\begin{lemma}\label{field_lowering_lemma}
We have 
\begin{equation*}
\left( \frac{a+\beta}{\mathfrak{c} '} \right)_{3,K'} = \mathbf{1}_{\gcd(a+\beta,\mathfrak{c}')=1}.
\end{equation*}
\end{lemma}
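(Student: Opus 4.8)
The plan is to exploit a hidden symmetry: the ideal $\mathfrak{c}'$ is fixed by the involution $\overline{\sigma}$ of $K'$, and $\alpha:=a+\beta$ is congruent to $\overline{\sigma}(\alpha)$ modulo $\mathfrak{c}'$, so Galois-equivariance of the cubic residue symbol forces $\left(\frac{\alpha}{\mathfrak{c}'}\right)_{3,K'}$ to be fixed by $\overline{\sigma}$. Since $\overline{\sigma}$ acts on $\{1,\zeta_3,\zeta_3^2\}$ by inversion, the only nonzero value it can fix is $1$. First, if $\gcd(a+\beta,\mathfrak{c}')\neq 1$ then some prime factor of $\mathfrak{c}'$ divides $a+\beta$, and multiplicativity of the cubic residue symbol gives $\left(\frac{a+\beta}{\mathfrak{c}'}\right)_{3,K'}=0=\mathbf{1}_{\gcd(a+\beta,\mathfrak{c}')=1}$; so one is reduced to showing $\left(\frac{\alpha}{\mathfrak{c}'}\right)_{3,K'}=1$ when $\gcd(\alpha,\mathfrak{c}')=1$. (Note $\mathfrak{c}'$ is coprime to $F_0$, hence to $3$, so all cubic symbols below are defined.)

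\textbf{Setting up the symmetry.} Recall $\overline{\sigma}$ is the automorphism $x\mapsto\sigma(\overline{x})$ of $K'$, where $\sigma$ is one of the double transpositions in $\Gal(K'/\mathbb{Q}(\zeta_3))$ and $x\mapsto\overline{x}$ is complex conjugation. I would first record that $\overline{\sigma}$ is an involution: in $\Gal(K'/\mathbb{Q})\cong\Gal(K/\mathbb{Q})\times\Gal(\mathbb{Q}(\zeta_3)/\mathbb{Q})$ the second factor is central, so $\sigma$ commutes with complex conjugation and $\overline{\sigma}^2=\sigma^2\cdot(\text{conj})^2=1$; moreover $\overline{\sigma}(\zeta_3)=\sigma(\zeta_3^{-1})=\zeta_3^{-1}$ since $\sigma$ fixes $\mathbb{Q}(\zeta_3)$. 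Because $a\in\mathbb{Z}$ is fixed by $\overline{\sigma}$, we have $\overline{\sigma}(\alpha)-\alpha=\overline{\sigma}(\beta)-\beta$ (a nonzero element of $\mathcal{O}_{K'}$, since the $\beta$ with $\overline{\sigma}(\beta)=\beta$ were discarded). As $\mathfrak{c}'$ is by definition the largest ideal divisor of $(\overline{\sigma}(\beta)-\beta)$ coprime to $F_0$, this gives $\overline{\sigma}(\alpha)\equiv\alpha\pmod{\mathfrak{c}'}$.

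\textbf{Conclusion.} Next I would show $\overline{\sigma}(\mathfrak{c}')=\mathfrak{c}'$: since $\overline{\sigma}$ is an involution, $\overline{\sigma}\big((\overline{\sigma}(\beta)-\beta)\big)=(\beta-\overline{\sigma}(\beta))=(\overline{\sigma}(\beta)-\beta)$ as ideals, so $\overline{\sigma}$ permutes the prime factors of this ideal preserving multiplicities; as it also preserves divisibility by $F_0$ (because $F_0\in\mathbb{Z}$), it fixes the prime-to-$F_0$ part $\mathfrak{c}'$. Combining this with the Galois-equivariance $\tau\!\left(\left(\frac{\gamma}{\mathfrak{a}}\right)_{3,K'}\right)=\left(\frac{\tau(\gamma)}{\tau(\mathfrak{a})}\right)_{3,K'}$ (immediate from the congruence definition of the symbol and $N(\tau\mathfrak{p})=N(\mathfrak{p})$), applied with $\tau=\overline{\sigma}$, and with the fact that $\left(\frac{\cdot}{\mathfrak{c}'}\right)_{3,K'}$ depends only on the numerator modulo $\mathfrak{c}'$:
\begin{equation*}
\overline{\sigma}\!\left(\left(\tfrac{\alpha}{\mathfrak{c}'}\right)_{3,K'}\right)
=\left(\tfrac{\overline{\sigma}(\alpha)}{\overline{\sigma}(\mathfrak{c}')}\right)_{3,K'}
=\left(\tfrac{\overline{\sigma}(\alpha)}{\mathfrak{c}'}\right)_{3,K'}
=\left(\tfrac{\alpha}{\mathfrak{c}'}\right)_{3,K'}.
\end{equation*}
Hence $\left(\frac{\alpha}{\mathfrak{c}'}\right)_{3,K'}\in\{1,\zeta_3,\zeta_3^2\}$ is fixed by $\overline{\sigma}$; since $\overline{\sigma}(\zeta_3)=\zeta_3^{-1}\neq\zeta_3$, this value must be $1$, which finishes the proof.

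\textbf{Main obstacle.} There is no deep difficulty here once the symmetry is identified; the only points that need genuine care are verifying $\overline{\sigma}(\mathfrak{c}')=\mathfrak{c}'$ and correctly tracking that $\overline{\sigma}$ acts on $\mu_3$ by inversion rather than trivially — it is precisely the failure of $\overline{\sigma}$ to fix $\zeta_3$ that forces the symbol to be $1$.
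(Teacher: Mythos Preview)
Your proof is correct and takes a genuinely different route from the paper's. The paper first descends $\mathfrak{c}'$ to an ideal of the fixed field $E_0$ of $\overline{\sigma}$, factors it into primes $P_i$ of $\mathcal{O}_{E_0}$, and then argues prime by prime: using \cite[Lemma 3.4]{peter} it replaces $a+\beta$ by some $\beta'\in\mathcal{O}_{E_0}$ modulo $P_i$, invokes \cite[Lemma 2.4]{peter} when $P_i$ splits in $K'$, and in the inert case performs a somewhat delicate analysis of decomposition groups in $\Gal(K'/\mathbb{Q})\cong A_4\times\{\pm 1\}$ to show $N(P_i)\equiv 2\bmod 3$, so that $\beta'$ lies in a residue field of order prime to $3$ and is automatically a cube.

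Your argument bypasses all of this by working directly with the global symmetry: Galois-equivariance of the cubic symbol together with $\overline{\sigma}(\mathfrak{c}')=\mathfrak{c}'$ and $\overline{\sigma}(\alpha)\equiv\alpha\pmod{\mathfrak{c}'}$ forces the value to be $\overline{\sigma}$-fixed, and the single observation that $\overline{\sigma}$ inverts $\zeta_3$ finishes it. This is shorter, avoids the external citations, and handles the split and inert primes uniformly; the paper's prime-by-prime approach, on the other hand, makes the local mechanism (in particular the role of the residue-field sizes) more transparent and is closer in spirit to the field-lowering arguments it is modeled on.
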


\begin{proof}
If we let $E_0$ denote the fixed field of $\overline{\sigma}$, then $\mathfrak{c} '$ is an extension of an ideal of $\mathcal{O}_{E_0}$ since the ideal $(\overline{\sigma}(\beta)-\beta)$ is invariant under $\overline{\sigma}$. By an abuse of notation, we now consider $\mathfrak{c} '$ as an ideal of $\mathcal{O}_{E_0}$. Factor $\mathfrak{c} ' = \prod_{i=1}^k P_i^{e_i}$ where the $P_i$ are non-zero prime ideals of $\mathcal{O}_{E_0}$. Then
\begin{equation*}
  \left( \frac{a+\beta}{\mathfrak{c} ' \mathcal{O}_{K'}}\right)_{3,K'}= \prod_{i=1}^k \left( \frac{a +\beta}{P_i \mathcal{O}_{K'}} \right)_{3,K'}^{e_i}.
\end{equation*}
Fixing $P_i$, we have $\overline{\sigma}(a+\beta)\equiv a+\beta \bmod{P_i}$ so by \cite[Lemma 3.4]{peter}, there is $\beta ' \in \mathcal{O}_{E_0}$ such that $a+\beta \equiv \beta ' \bmod{P_i \mathcal{O}_{K'}}$ so
\begin{equation*}
\left( \frac{a+\beta}{P_i \mathcal{O}_{K'}} \right)_{3,K'}= \left( \frac{\beta '}{P_i \mathcal{O}_{K'}} \right)_{3,K'}.
\end{equation*}
By \cite[Lemma 2.4]{peter}, this equals $\mathbf{1}_{\gcd(a+\beta,P_i)=1}$ when $P_i$ splits completely in $K'$. If $P_i$ is inert in $K'$, we would like to appeal to \cite[Lemma 2.5]{peter}, but then we must assume that $P_i$ has degree $1$ over $\mathbb{Q}$ which is not always the case. We now explain how this assumption can be removed in our setting. Let $p$ be the prime in $\mathbb{Q}$ lying under $P_i$. We show that $N(P_i) \equiv 2 \bmod{3}$, and then everything is a cube modulo $P_i$, so the above cubic residue symbol also equals $\mathbf{1}_{\gcd(a+\beta,P_i)=1}$.\\

\noindent
Since $\mathfrak{c} '$ is coprime to $F_0$, $p$ is unramified in $K'$. Hence the inertial degree of $p$ in $K'$ is the order of any Frobenius element over $p$ in $\mathrm{Gal}(K'/\mathbb{Q})$. Since $\mathrm{Gal}(K'/\mathbb{Q} )\cong A_4 \times \left\{ \pm 1 \right\}$, an element can have order either $1$, $2$, $3$ or $6$. Since $P_i$ has inertial degree $2$ in $K'$, this is only possible if the inertial degree $f_{P_i /p}$ in $E_0$ is $1$ or $3$ so $N(P_i)= p$ or $N(P_i)=p^3$. In either case $N(P_i)\equiv p \bmod{3}$ so we must show $p \equiv 2 \bmod{3}$, or equivalently that $p$ is inert in $\mathbb{Q}(\zeta_3)$. Suppose for the sake of contradiction that $p$ splits in $\mathbb{Q}(\zeta_3)$. If $f_{P_i /p}=3$, then it follows that the two primes above $p$ in $\mathbb{Q}(\zeta_3)$ have inertial degree $6$ in $K'$. This is impossible since $\mathrm{Gal}(K'/\mathbb{Q}(\zeta_3))\cong A_4$ contains no elements of order $6$. Hence $f_{P_i /p}=1$ so $p$ has inertial degree $2$ in $K'$, and the decomposition group of $P_i \mathcal{O}_{K'}$ has size $2$ and equals $\mathrm{Gal}(K'/E_0)= \left\{ 1,\overline{\sigma} \right\}$. It follows that $\overline{\sigma}$ is a Frobenius element over $p$ in $\mathrm{Gal}(K'/\mathbb{Q})$. The inertial degree of $p$ in $\mathbb{Q}(\zeta_3)$ is now equal to the order of $\overline{\sigma} \mid_{\mathbb{Q}(\zeta_3)}$. Since $\overline{\sigma}(\zeta_3)\neq \zeta_3$ this order is $2$. This is a contradiction since we assume $p$ has inertial degree $1$ in $\mathbb{Q}(\zeta_3)$. So, in the first place, $p$ must have had inertial degree $2$ in $\mathbb{Q}(\zeta_3)$ which is to say $p \equiv 2 \bmod{3}$.
\end{proof}

\noindent
We now handle the other residue symbol $\big(\frac{a+\beta}{\mathfrak{c}}\big)_{3,K'}$. Here the analysis is more similar to \cite{peter}. Recall that $E$ is the fixed field of $\sigma$. Since $\sigma$ preserves the ideal $(\sigma(\beta)-\beta)$, it follows that $\mathfrak{c}$ is an extension of an ideal of $\mathcal{O}_{E}$ which we also denote $\mathfrak{c}$. We can factor $\mathfrak{c} = \mathfrak{g} \mathfrak{q}$ in $\mathcal{O}_E$ where $\mathfrak{g}$ has squarefull norm, $\mathfrak{q}$ has squarefree norm, and $\gcd(N(\mathfrak{g}),N(\mathfrak{q}))=1$. In particular, every prime factor of $\mathfrak{q}$ has degree $1$ over $\mathbb{Q}$. Arguing precisely as in \cite[Section 2.4]{peter}, we find that
\begin{equation*}
T(X, \rho , \beta)= \sum_{\substack{
    a \in \mathbb{Z}\\
    a+ \beta \in \mathcal{D}(X)\\
    a+\beta \equiv 0 \bmod{\mathfrak{m}}\\
    a+\beta \equiv \rho \bmod{F_0}
}}\left(\frac{a+ \beta}{\mathfrak{g} \mathcal{O}_{K'}} \right)_{3,K'}\cdot \left( \frac{a+b}{\mathfrak{q}} \right)_{3,E}^2 \cdot \mathbf{1}_{\gcd (a+\beta, \mathfrak{c} ')=1}
\end{equation*}
where $b$ is a rational integer only depending on $\beta$.

Let $\mathfrak{g}_0 := \prod_{\mathfrak{p} \mid \mathfrak{g}} \mathfrak{p}$ denote the radical of $\mathfrak{g}$, and $g_0: = N_{E/ \mathbb{Q} }(\mathfrak{g}_0)$ the absolute norm of $\mathfrak{g}_0$. Then $\big(\frac{a + \beta}{\mathfrak{g} \mathcal{O}_{K'}}\big)_{3,K'}$ depends only on the residue class of $a$ modulo $\mathfrak{g}_0$. Hence
\begin{equation*}
\lvert T(X, \rho , \beta) \rvert \leq \sum_{a_0 \bmod{g_0}} \lvert T(X, \rho, \beta, a_0) \rvert
\end{equation*}
where
\begin{equation*}
T(X, \rho, \beta , a_0) := \sum_{\substack{
    a \in \mathbb{Z}\\
    a+ \beta \in \mathcal{D}(X)\\
    a+ \beta \equiv 0 \bmod{\mathfrak{m}}\\
    a+\beta \equiv \rho \bmod{F_0}\\
    a \equiv a_0 \bmod{g_0}
}} \left( \frac{a+b}{\mathfrak{q}} \right)_{3,E} \cdot \mathbf{1}_{\gcd(a+\beta,\mathfrak{c} ')=1}.
\end{equation*}
Note that we are not squaring the cubic residue symbol because this is equivalent to conjugating it which does not change the modulus of $T(X, \rho, \beta , a_0)$. Using Mobius inversion, we have
\begin{equation*}
\lvert T(X, \rho , \beta , a_0) \rvert \leq \sum_{\substack{
    \mathfrak{d} \mid \mathfrak{c} '\mathcal{O}_{K'}\\
    \mathfrak{d} \textrm{ squarefree}
}} \lvert T(X,\rho,\beta, a_0 , \mathfrak{d}) \rvert
\end{equation*}
where 
\begin{equation*}
T(X, \rho , \beta , a_0 , \mathfrak{d}):= \sum_{\substack{
    a \in \mathbb{Z}\\
    a + \beta \in \mathcal{D}(X)\\
    a+ \beta \equiv 0\bmod{\mathfrak{m}}\\
    a+\beta \equiv \rho \bmod{F_0}\\
    a \equiv a_0 \bmod{g_0}\\
    a+ \beta \equiv 0 \bmod{\mathfrak{d}}}} \left( \frac{a+b}{\mathfrak{q}} \right)_{3,E}.
\end{equation*}
\noindent
Let $q:= N_{E/ \mathbb{Q}}(\mathfrak{q})$ be the absolute norm of $\mathfrak{q}$ which is a squarefree integer coprime to $3$. The function $ \chi_{\mathfrak{q}}: \ell \mapsto \big(\frac{\ell}{\mathfrak{q}}\big)_{3,E}$ is Dirichlet character modulo $q$, and by Lemma \ref{non_principal}, it is non-principal for $q>1$. The summation conditions in $T (X,\rho,\beta,a_0,\mathfrak{d})$, means that $a$ runs over at most $8$ intervals of length $\ll X^{1/8}$ whose endpoints depend on $\beta$, and $a$ lies in an arithmetic progression of modulus $k$ dividing $m g_0 d F_0$ where $m:=N(\mathfrak{m})$ and $d:=N(\mathfrak{d})$. We claim that the modulus $q$ of $\chi_{\mathfrak{q}}$ is $\ll X^{\frac{3}{2}}$. First, observe that
\begin{equation*}
q=N_{E/  \mathbb{Q}}(\mathfrak{q})=N_{K' / \mathbb{Q}}(\mathfrak{q}\mathcal{O}_{K'})^{\frac{1}{2}} \leq |N_{K'/\mathbb{Q}}(\sigma(\beta)-\beta)|^{\frac{1}{2}}
\end{equation*}
and recall that we chose the integral basis $1=\eta_1,\eta_2,...,\eta_8$ for $\mathcal{O}_{F'}$, and $\beta=\sum_{i=2}^8a_i \eta_i$ for some integers $a_i$ satisfying $|a_i|\ll X^{\frac{1}{8}}$. The polynomial 
\begin{equation*}
F(X_2,...,X_8):=N_{K'/\mathbb{Q}}\left(\sum_{i=2}^8 X_i (\sigma(\eta_i)-\eta_i)\right)
\end{equation*}
has rational coefficients and total degree $24$. The coefficients depend only on $\eta_2,...,\eta_8$ and $\sigma$ which are all fixed. Hence 
\begin{equation*}
|N_{K'/\mathbb{Q}}(\sigma(\beta)-\beta)|=|F(a_2,...,a_8)|\ll (X^{1/8})^{24}=X^3,
\end{equation*}
and indeed $q\ll X^{\frac{3}{2}}$. Assuming Conjecture $C_n$ with $n=12$, Corollary \ref{short_character_sum_corollary} tells us that when $q\nmid k$, there is $\delta>0$ such that $T(X,\rho,\beta) \ll_{\varepsilon} g_0 X^{\frac{1}{8}-\delta+\varepsilon}$ for all $\varepsilon>0$. This is of course only interesting when $g_0$ is at most some small power of $X$. When $g_0$ is large, say $g_0 \geq Z$ for some fixed $Z$, we instead take a step back and use the estimate
\begin{equation*}
A(X,\rho) \ll X^{\frac{1}{8}} \# \left\{ \beta \in \mathbb{M}\,:\, \lvert a_i \rvert \ll X^{\frac{1}{8}},\, g_0 \geq Z \right\}.
\end{equation*}
Choosing $Z$ to be a small power of $X$, a power saving can be achieved using Proposition \ref{squarefull}.\\

\noindent
When the modulus $q$ of the Dirichlet character $\chi_{\mathfrak{q}}$ divides the modulus $k$ of the arithmetic progression there is of course no cancellation. We instead reformulate this as a condition similar to \cite[Equation (4.4)]{imrn}. Recall that $E$ and $E_0$ denote the fixed fields of $\sigma$ and $\overline{\sigma}$ respectively. Since $\sigma$ has order $2$, it follows that there is a non-zero $\alpha_0 \in \mathcal{O}_K$ such that $\sigma(\alpha_0)=-\alpha_0$, and since $\overline{\alpha_0}=\alpha_0$, we also have $\overline{\sigma }(\alpha_0)=-\alpha_0$. It follows that the images of the linear maps
\begin{equation*}
\phi : \mathbb{M} \rightarrow K',\quad x \mapsto \alpha_0(\sigma(x)-x)
\end{equation*}
\begin{equation*}
\phi' : \mathbb{M} \rightarrow K', \quad x \mapsto \alpha_0( \overline{\sigma}(x)-x)
\end{equation*}
are contained in $\mathcal{O}_{E}$ and $\mathcal{O}_{E_0}$ respectively. Both $\phi(\mathbb{M})$ and $\phi '(\mathbb{M})$ are lattices, and later we show that their ranks are $6$ and $7$ respectively. Returning to the situation when $q \mid k$, we prove the following:

\begin{lemma}\label{rephrasing}
Assume Conjecture $C_{12}$. Then there exists $\delta >0$ with the following property: If $\beta \in \mathbb{M}$, and $a+\beta \in \mathcal{D}(X)$ for some $a \in \mathbb{Z}$ then one of the following conditions hold
\begin{enumerate}[(i)]
\item $T(X, \rho , \beta) \ll_\varepsilon g_0 X^{\frac{1}{8}-\delta + \varepsilon}$ for all $\rho$ modulo $F_0$ and all $\varepsilon >0$.
\item For all primes $p$ we have the implication
\begin{equation}\label{firkant} \tag{$\square$}
p \mid N_{E/\mathbb{Q}}(\phi(\beta)) \Rightarrow p^2 \mid m F_0 N_{K'/ \mathbb{Q} }(\alpha_0) N_{E/\mathbb{Q} }(\phi(\beta))N_{E_0 / \mathbb{Q} }(\phi '(\beta)).
\end{equation}
\end{enumerate}
\end{lemma}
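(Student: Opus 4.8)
The plan is to make precise the dichotomy already visible in the discussion preceding the lemma: either the modulus $q=N(\mathfrak{q})$ of the character $\chi_{\mathfrak{q}}$ in $T(X,\rho,\beta)$ is coprime to the moduli of all the arithmetic progressions produced by the reductions — in which case Conjecture $C_{12}$ via Corollary \ref{short_character_sum_corollary} gives (i) — or it is not, and then $(\square)$ must hold. Concretely, recall that $|T(X,\rho,\beta)|\le\sum_{a_0\bmod g_0}\sum_{\mathfrak{d}}|T(X,\rho,\beta,a_0,\mathfrak{d})|$ with $\ll_\varepsilon g_0X^\varepsilon$ summands, that each $T(X,\rho,\beta,a_0,\mathfrak{d})$ is an incomplete sum of the non-principal character $\chi_{\mathfrak{q}}$ of modulus $q\ll X^{3/2}$ over $O(1)$ intervals of length $\ll X^{1/8}$, and that $a$ runs through an arithmetic progression of modulus $k$ dividing $mg_0dF_0$, where $d=N(\mathfrak{d})$ divides $N_{K'/\Q}(\mathfrak{c}'\mathcal{O}_{K'})=N_{E_0/\Q}(\mathfrak{c}')^2$. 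Let $k_0$ be the least common multiple of these moduli $k$ as $a_0$ and $\mathfrak{d}$ vary, so $k_0\mid mF_0g_0N_{E_0/\Q}(\mathfrak{c}')^2$. First I would dispose of the case $q\nmid k_0$: then $q\nmid k$ for every summand, so Corollary \ref{short_character_sum_corollary} applies with $n=12$ and $Q\asymp X^{3/2}$ — the inequality $X^{1/8}\ll Q^{1/12}$ being exactly why $C_{12}$ is the relevant conjecture — giving $T(X,\rho,\beta,a_0,\mathfrak{d})\ll_\varepsilon X^{1/8-\delta_0+\varepsilon}$ for some $\delta_0>0$; summing over the $\ll_\varepsilon g_0X^\varepsilon$ terms yields (i), say with $\delta=\delta_0/2$. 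It remains to show that $q\mid k_0$ forces $(\square)$.

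So assume $q\mid k_0\mid mF_0g_0N_{E_0/\Q}(\mathfrak{c}')^2$. Since $\mathfrak{q}$ divides $\mathfrak{c}$, which is coprime to $F_0$, and $\mathfrak{c}=\mathfrak{g}\mathfrak{q}$ with $\gcd(N(\mathfrak{g}),N(\mathfrak{q}))=1$, the integer $q$ is coprime to $F_0g_0$, whence $q\mid mN_{E_0/\Q}(\mathfrak{c}')^2$. The next ingredient is a factorisation of the $\mathcal{O}_E$-ideal $(\phi(\beta))$: writing $(\sigma(\beta)-\beta)\mathcal{O}_{K'}=\mathfrak{r}\cdot\mathfrak{c}\mathcal{O}_{K'}$, where $\mathfrak{r}$ is supported on the primes of $K'$ above the rational primes dividing $F_0$ and $\mathfrak{c}$ is the extension of the $\mathcal{O}_E$-ideal coming from the remaining, unramified, primes (as already observed), and using that $\phi(\beta)=\alpha_0(\sigma(\beta)-\beta)\in\mathcal{O}_E$ with $[K':E]=2$, one gets $N_{E/\Q}(\phi(\beta))^2=N_{K'/\Q}(\phi(\beta))=N_{K'/\Q}(\alpha_0)\cdot N_{K'/\Q}(\mathfrak{r})\cdot(gq)^2$. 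In particular every prime $p$ dividing $N_{E/\Q}(\phi(\beta))$ divides one of $N_{K'/\Q}(\alpha_0)$, $N_{K'/\Q}(\mathfrak{r})$, $g$, or $q$.

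Now I would verify $(\square)$ prime by prime, using throughout that $N_{E/\Q}(\phi(\beta))$ is itself one of the factors of the product $P:=mF_0\,N_{K'/\Q}(\alpha_0)\,N_{E/\Q}(\phi(\beta))\,N_{E_0/\Q}(\phi'(\beta))$, so that for each $p\mid N_{E/\Q}(\phi(\beta))$ it suffices to exhibit a second occurrence of $p$ among the factors of $P$. If $p\mid g$ this is automatic, since $g$ is squarefull, so already $p^2\mid g\mid N_{E/\Q}(\phi(\beta))$. If $p\mid N_{K'/\Q}(\mathfrak{r})$ then $p\mid F_0$, and $p$ divides the two distinct factors $F_0$ and $N_{E/\Q}(\phi(\beta))$ of $P$; the same holds if $p\mid N_{K'/\Q}(\alpha_0)$. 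Finally, if $p\mid q$ then $v_p(q)=1$ as $q$ is squarefree, and $q\mid mN_{E_0/\Q}(\mathfrak{c}')^2$ gives $p\mid m$ or $p\mid N_{E_0/\Q}(\mathfrak{c}')$; in the latter case $p\mid N_{E_0/\Q}(\phi'(\beta))$ because $\mathfrak{c}'$ divides $(\phi'(\beta))$ in $\mathcal{O}_{E_0}$ (here one uses that $\mathfrak{c}'$ is extended from $\mathcal{O}_{E_0}$ and that extension of ideals is injective). In either subcase $p$ divides two distinct factors of $P$, so $p^2\mid P$, which is $(\square)$.

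The reduction and the verification of the hypotheses of Corollary \ref{short_character_sum_corollary} are routine; the step that requires genuine care — and where the non-Galois nature of $F'/\Q$ is felt — is the bookkeeping in the last two paragraphs. The point is that the product on the right of $(\square)$ must be rigged to be just large enough that every prime divisor of $N_{E/\Q}(\phi(\beta))$ is doubled, and the only such divisors not doubled for free (by squarefullness of $\mathfrak{g}$, by $N_{K'/\Q}(\alpha_0)$ being a perfect square, or by already dividing $F_0$) are those of the squarefree ideal $\mathfrak{q}$; these are absorbed precisely because $q\mid k_0$ throws them into $m$ or into $N_{E_0/\Q}(\phi'(\beta))$. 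Keeping straight that $\phi(\beta)\in\mathcal{O}_E$, $\phi'(\beta)\in\mathcal{O}_{E_0}$, that $\mathfrak{c}$ and $\mathfrak{c}'$ descend to the proper subfields $E$ and $E_0$, and the factors of $2$ from $[K':E]=[K':E_0]=2$, is the main thing to get right.
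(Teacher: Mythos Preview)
Your proof is correct and follows essentially the same route as the paper's: reduce to the dichotomy $q\nmid k$ (apply Corollary~\ref{short_character_sum_corollary} with $n=12$, $Q\asymp X^{3/2}$) versus $q\mid k$ (derive $(\square)$ by the prime-by-prime case analysis using $N_{E/\Q}(\phi(\beta))^2=N_{K'/\Q}(\alpha_0)N_{K'/\Q}(\sigma(\beta)-\beta)$ and the coprimality of $q$ with $F_0g_0$). The only cosmetic difference is that you package the moduli into a single $k_0=\mathrm{lcm}$ and then argue $q\mid k_0\Rightarrow q\mid mN_{E_0/\Q}(\mathfrak{c}')^2$, whereas the paper picks one pair $(a_0,\mathfrak{d})$ with $q\mid k$ and works with the specific $d=N(\mathfrak{d})$; both reach the same endpoint.
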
 

\begin{proof}
We must prove that \eqref{firkant} holds if the modulus $q$ of $\chi_{\mathfrak{q}}$ divides the modulus $k$ of the arithmetic progression. In this case, $q \mid mdF_0$ where $d$ is the norm of a squarefree ideal dividing $\mathfrak{c} ' \mathcal{O}_{K'}$. Assume that $p \mid N_{E/ \mathbb{Q}}(\phi(\beta))$. We have
\begin{equation}\label{norms}
N_{E/ \mathbb{Q}}(\phi(\beta))^2 =N _{K'/\mathbb{Q}}(\phi(\beta))= N_{K'/ \mathbb{Q}}(\alpha_0)N_{K'/ \mathbb{Q}}(\sigma(\beta)-\beta)
\end{equation}
so if $p \mid N_{K/ \mathbb{Q}}(\alpha_0)$, then \eqref{firkant} holds for $p$. Otherwise, we find that $p \mid N_{K'/\mathbb{Q}}(\sigma(\beta)-\beta)$. If $p \mid F_0$, it is again clear that \eqref{firkant} holds so assume $p \nmid F_0$. Then $p \mid N_{E/\mathbb{Q}}(\mathfrak{c})$ since, by definition, $\mathfrak{c} \mathcal{O}_{K'}$ is the largest divisior of $(\sigma(\beta)-\beta)$ coprime to $F_0$. We have decomposed $\mathfrak{c} = \mathfrak{g} \mathfrak{q}$ where $N_{E/\mathbb{Q}}(\mathfrak{g})$ is squarefull and $N_{E/\mathbb{Q}}(\mathfrak{q})$ is squarefree. If $p \mid N_{E/ \mathbb{Q}}(\mathfrak{g})$ then $p^2 \mid N_{E/\mathbb{Q}}(\mathfrak{g}) $ so $p^2\mid N_{E/ \mathbb{Q}}(\mathfrak{c})$. Since $N_{E/ \mathbb{Q}}(\mathfrak{c})^2 \mid N_{K'/\mathbb{Q}}(\sigma(\beta)-\beta)$ it follows from \eqref{norms} that $p^2 \mid N_{E/ \mathbb{Q}}(\phi(\beta)) $ so \eqref{firkant} holds. Otherwise, $p \mid N_{E/\mathbb{Q}}(\mathfrak{q})=q$ so $p \mid md F_0$. The number $d$ is the norm of a squarefree divisor of the ideal $(\overline{\sigma}(\beta)-\beta)$ in $\mathcal{O}_{K'}$. Like in \eqref{norms}, we have $N_{E_0 / \mathbb{Q}}(\phi'(\beta))^2 = N_{K'/ \mathbb{Q}}(\alpha_0 ) N_{K'/ \mathbb{Q}}(\overline{\sigma}(\beta)-\beta)$ so if $p \mid d$, we have $p \mid N_{E_0 / \mathbb{Q}}(\phi '(\beta))$. Hence $p \mid m F_0 N_{E_0 / \mathbb{Q}}(\phi '(\beta))$, and \eqref{firkant} still holds.
\end{proof}

\noindent
Motivated by this lemma, let $A_{\square}(X, \rho)$ denote the contribution to $A(X, \rho)$ from $\beta \in \mathbb{M}$ such that the condition \eqref{firkant} in the above lemma holds, and let $A_0(X, \rho)$ denote the contribution from the remaining $\beta \in \mathbb{M}$ where (i) holds. We estimate $A_{\square}(X,\rho)$ and $A_0(X, \rho)$ separately, and in both cases we need the material from Section \ref{counting_ideals}. As a preparation, we prove  

\begin{lemma}\label{preliminary}
Let $\Lambda_1 := \phi (\mathbb{M})\subset E$ and $\Lambda_2 := \phi '(\mathbb{M})\subset E_0$. Then
\begin{enumerate}
\item $\Lambda_1$ is a $\mathbb{Z}$-lattice of rank $6$;
\item $\Lambda_2$ is a $\mathbb{Z}$-lattice of rank $7$;
\item There is no $\alpha \in E^\times$ such that $\alpha \Lambda_1$ is contained in a proper subfield of $E$.
\end{enumerate}
\end{lemma}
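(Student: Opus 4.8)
The plan is to prove all three parts together, by extending $\phi$ and $\phi'$ to $\Q$-linear maps $F'\to K'$ via the same formulas and analysing their kernels and images. Since $\sigma$ and $\overline{\sigma}$ fix $1$, both maps vanish on $\Q\cdot 1$, so from $F'=\Q\cdot 1\oplus(\mathbb{M}\otimes\Q)$ we obtain $\Lambda_1\otimes\Q=\phi(F')$ and $\Lambda_2\otimes\Q=\phi'(F')$; that $\Lambda_1,\Lambda_2$ are free $\Z$-modules is automatic because their images sit inside $\mathcal{O}_E,\mathcal{O}_{E_0}\subseteq\mathcal{O}_{K'}$, so it remains only to compute $\dim_\Q\phi(F')$ and $\dim_\Q\phi'(F')$.

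First I would handle parts (1) and (2). The kernel of $\phi$ on $F'$ is $\{x\in F':\sigma(x)=x\}=F'\cap E$, which is the fixed field of the subgroup of $\Gal(K'/\Q)$ generated by $\langle\sigma\rangle$ and $\Gal(K'/F')=\langle\tau\rangle$. Inside $A_4=\Gal(K'/\Q(\zeta_3))$, $\tau$ is a $3$-cycle and $\sigma$ a double transposition, and since $A_4$ has no subgroup of order $6$ they generate $A_4$; hence $F'\cap E=\Q(\zeta_3)$ has $\Q$-dimension $2$, and $\Lambda_1$ has rank $8-2=6$. For $\phi'$, the kernel on $F'$ is $F'\cap E_0$, the fixed field of $\langle\tau,\overline{\sigma}\rangle$. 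Writing $\overline{\sigma}=\sigma c$, where $c$ generates $\Gal(K'/K)$ and commutes with $\sigma$, the subgroup $\langle\tau,\overline{\sigma}\rangle$ of $A_4\times\langle c\rangle$ surjects onto both direct factors, hence has order at least $12$; were its order exactly $12$, its intersection with $A_4\times\{1\}$ would be a subgroup of $A_4$ of order $6$, which is impossible. Thus $\langle\tau,\overline{\sigma}\rangle=\Gal(K'/\Q)$, so $F'\cap E_0=\Q$ and $\Lambda_2$ has rank $8-1=7$.

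For part (3) I would first decompose $\Lambda_1\otimes\Q$. Since $\sigma$ fixes $\zeta_3$ we have $\phi(\zeta_3 x)=\zeta_3\phi(x)$, and together with $F=\Q\cdot 1\oplus(\Q\theta_2\oplus\Q\theta_3\oplus\Q\theta_4)$ and $\phi(\Q\cdot 1)=0$ this yields $\Lambda_1\otimes\Q=\phi(F')=\alpha_0(W_F\oplus\zeta_3 W_F)$, where $W_F:=\{\sigma(x)-x:x\in F\}\subseteq K$ has dimension $3$ (its kernel on $F$ is $\{x\in F:\sigma(x)=x\}=\Q$, because $\sigma|_F$ and $\Gal(K/F)$ generate $\Gal(K/\Q)$, and the sum is direct because $W_F\subseteq K$ is real while $\zeta_3$ is not). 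Now suppose $\alpha\in E^{\times}$ and $\alpha\Lambda_1$ is contained in a proper subfield of $E$. Fixing a nonzero $w\in W_F$, both $\alpha\alpha_0 w$ and $\alpha\alpha_0\zeta_3 w$ lie in that subfield, so $\zeta_3$ does too; hence the subfield contains $\Q(\zeta_3)$. The subfields of $E$ containing $\Q(\zeta_3)$ correspond to the subgroups of $A_4$ containing $\langle\sigma\rangle$, namely $\langle\sigma\rangle$, the Klein four-group $V_4$, and $A_4$, with fixed fields $E$, $L(\zeta_3):=(K')^{V_4}$ (where $L$ is the cubic subfield of $K$), and $\Q(\zeta_3)$. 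The proper ones among these are contained in $L(\zeta_3)$, so $\alpha\Lambda_1\subseteq L(\zeta_3)$. Choosing a double transposition $\sigma_2\neq\sigma$ in $V_4=\Gal(K'/L(\zeta_3))$, the relation $\sigma_2(\alpha\lambda)=\alpha\lambda$ for $\lambda\in\Lambda_1\otimes\Q$, applied with $\lambda=\alpha_0 w$, shows that $\sigma_2(w)/w$ is a nonzero constant $\gamma$ independent of $w\in W_F$. Labelling the roots of $n(x)$ as $t_1,\dots,t_4$ with $F=\Q(t_1)$, $\sigma=(12)(34)$ and $\sigma_2=(13)(24)$, the elements $t_2-t_1=\sigma(t_1)-t_1$ and $t_2^{2}-t_1^{2}=\sigma(t_1^{2})-t_1^{2}$ lie in $W_F$, so $t_4-t_3=\gamma(t_2-t_1)$ and $(t_4-t_3)(t_4+t_3)=\gamma(t_2-t_1)(t_2+t_1)$; since the roots are distinct, dividing gives $t_3+t_4=t_1+t_2$. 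Applying the element of $\Gal(K/\Q)$ acting as the $3$-cycle $(123)$ then gives $t_1+t_4=t_2+t_3$, and subtracting forces $t_1=t_3$, a contradiction. This proves (3).

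The only part requiring genuine care is (3): the decisive observation is that $\zeta_3$ itself occurs as a ratio of two elements of $\Lambda_1\otimes\Q$, which immediately confines any hypothetical proper subfield to $L(\zeta_3)$; after that, the content is to turn the statement that a double transposition scales $W_F$ into a linear relation among the roots of $n(x)$ that the $A_4$-action rules out. Parts (1) and (2) are routine and come down to the subgroup structure of $A_4$ and of $A_4\times\langle c\rangle$.
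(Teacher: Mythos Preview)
Your proof is correct. Parts (1) and (2) match the paper's argument almost verbatim: both compute $F'\cap E$ and $F'\cap E_0$ by determining the subgroups of $\Gal(K'/\Q)$ generated by $\{\sigma,\tau\}$ and $\{\overline{\sigma},\tau\}$, and your order argument for (2) is equivalent to the paper's observation that a subgroup of order $12$ would be the graph of a surjection $A_4\to\{\pm1\}$, which does not exist.

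For part (3) you take a genuinely different route. The paper, after reducing to the same proper subfield $L'=L(\zeta_3)$, argues by a dimension count: if the normalized lattice lay in $L'$, then $x\mapsto(\sigma(x)-x)/(\sigma(\eta_3)-\eta_3)$ would be a nonzero $L'$-linear map $K'\to L'$, forcing its kernel to have $L'$-dimension $3$; but the kernel is $E$, which has $L'$-dimension $2$. Your argument is more explicit: you show that any $\sigma_2\in V_4\setminus\{1,\sigma\}$ would have to act on $W_F$ as multiplication by a scalar, and by evaluating on $t_2-t_1$ and $t_2^2-t_1^2$ you extract the relation $t_1+t_2=t_3+t_4$, which the $A_4$-action promotes to $t_1=t_3$. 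The paper's approach is shorter and coordinate-free; yours has the virtue of being completely concrete and of making transparent exactly which algebraic identity fails.
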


\begin{proof}
\hfill{\phantom{}}
\begin{enumerate}
\item Recall that $\mathrm{Gal}(K'/\mathbb{Q}(\zeta_3)) \cong A_4$, and $\sigma$ is a fixed double transposition in $A_4$. $F'$ has index $3$ in $K'$, so it is the fixed field of some $3$-cycle $\tau \in A_4$. Since $\sigma$ and $\tau$ generate $A_4$, it follows that $E\cap F'=\mathbb{Q}(\zeta_3)$ so the kernel of $\phi : \mathbb{M} \rightarrow E$ is $\mathbb{Z} \zeta_3$, and the image is the rank $6$ module spanned by $\alpha_0(\sigma(\eta_3)-\eta_3)$,..., $\alpha_0(\sigma(\eta_8)-\eta_8)$.

\item Let $\tau$ be as in the proof of (i). We must show that $\phi ' : \mathbb{M} \rightarrow E_0$ is injective which is equivalent to $E_0 \cap F'= \mathbb{Q}$ which in turn is equivalent to $\overline{\sigma}$ and $\tau$ generating $\mathrm{Gal}(K'/ \mathbb{Q} )$. Since $A_4$ is generated by any double transposition and $3$-cycle, it follows that, as a subgroup of $\mathrm{Gal}(K'/ \mathbb{Q}) \cong A_4 \times \left\{ \pm 1 \right\}$, $\left\langle \overline{\sigma},\tau \right\rangle$ is either all of $A_4 \times \left\{ \pm 1 \right\}$ or of the form $\left\{(g,\varepsilon(g))\,:\, g \in A_4 \right\}$ for some surjective homomorphism $\varepsilon : A_4 \rightarrow \left\{ \pm 1\right\}$. But $A_4$ has no subgroup of index $2$ so no such $\varepsilon$ can exist.

\item A $\mathbb{Z}$-basis for $\Lambda_1$ is $\alpha_0(\sigma(\eta_3)-\eta_3),..., \alpha_0(\sigma(\eta_8)-\eta_8)$. As we have already observed, it is enough to show that
\begin{equation*}
\frac{1}{ \alpha_0(\sigma(\eta_3)-\eta_3)} \Lambda_1
\end{equation*}
is not contained in a proper subfield of $E$. Since $(\sigma(\eta_4)-\eta_4)/(\sigma(\eta_3)-\eta_3)= \zeta_3$, this subfield must contain $\mathbb{Q}(\zeta_3)$. Since $\mathrm{Gal}(K'/ \mathbb{Q}(\zeta_3)) \cong A_4$, and $E$ is the fixed field of a double transposition, it follows that the only possibility is that it is contained in $L'$, the unique cubic subfield of $K'/ \mathbb{Q}(\zeta_3)$. If this were the case, then
\begin{equation*}
\frac{\sigma(\eta_i)-\eta_i}{\sigma(\eta_3)-\eta_3} \in L ' \quad \textrm{for }i=1,2,...,8.
\end{equation*}
Since $\sigma$ fixes $L'$, and $K' = L'F'$, it follows that the map 
\begin{equation*}
\quad x \mapsto \frac{\sigma(x)-x}{\sigma(\eta_3)-\eta_3}
\end{equation*}
is a non-zero $L'$-linear map $K' \rightarrow L'$. Since $\dim_{L'} K'=4$, the kernel must have $L'$-dimension $3$. But the kernel is $E$ which has degree $2$ over $L'$ so this is absurd.
\end{enumerate}  
\end{proof}

\noindent
As the final ingredient, we need an estimate of how often the greatest common divisor of $N_{E/ \mathbb{Q} }(\phi (\beta) )$ and $N_{E_0 / \mathbb{Q}}(\phi '(\beta))$ is large. The lemma below is similar to \cite[Lemma 3.2]{Koymans}, but this reference does not apply in our case, and we need to modify the proof. It turns out that the proof is easier in our case because the relations $\sigma (\eta_2)-\eta_2=0$ and $\overline{\sigma}(\eta_2)-\eta_2 \neq 0$ make the things we need visibly true, and there will be no need to use the Galois action as in the proof of \cite[Lemma 3.2]{Koymans}.

Recall that any $\beta \in \mathbb{M}$ can be written uniquely as $a_2 \eta_2 +\cdots+a_8 \eta_8$ where $a_i \in \mathbb{Z}$. Moreover, if $a+ \beta \in \mathcal{D}(X)$ for some $a \in \mathbb{Z}$, then $\lvert a_i \rvert \ll  X^{\frac{1}{8}}$ for all $i=2,3,...,8$ where the implied constant only depends on $F'$ and the integral basis $\eta_1 , ..., \eta_8$. 

\begin{lemma}\label{gcd}
There is $\theta >0$ such that for $Z>0$, we have the following estimate for all $\varepsilon >0$:
\begin{equation*}
  \# \left\{ \beta \in \mathbb{M}\,:\, \lvert a_i \rvert \ll  X^{\frac{1}{8}},\, \gcd(N_{E/ \mathbb{Q}}(\phi(\beta)),N_{E_0 / \mathbb{Q}}(\phi '(\beta))) \geq Z \right\}
\end{equation*}
\begin{equation*}
 \ll_\varepsilon X^{\varepsilon}(X^{\frac{7}{8}}Z^{-\theta}+X^{\frac{3}{4}}+Z^4)
\end{equation*}  
\end{lemma}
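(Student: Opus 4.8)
\noindent The plan is to exploit the asymmetry between $\phi$ and $\phi'$ created by the relation $\sigma(\eta_2)-\eta_2=0$, which lets us separate out one variable. Write $L:=X^{1/8}$, so the claim reads $\ll_\varepsilon X^\varepsilon(L^7Z^{-\theta}+L^6+Z^4)$, and decompose each $\beta\in\mathbb{M}$ as $\beta=a_2\eta_2+\beta_0$ with $\beta_0\in\mathbb{M}_0:=\bigoplus_{i=3}^8\mathbb{Z}\eta_i$. Since $\eta_2=\zeta_3$ is fixed by $\sigma\in\Gal(K'/\mathbb{Q}(\zeta_3))$, we have $\phi(\beta)=\phi(\beta_0)$, so $\phi(\beta)$ depends only on the six coordinates $a_3,\dots,a_8$; by Lemma~\ref{preliminary}, $\phi$ restricts to an injection $\mathbb{M}_0\hookrightarrow\Lambda_1\subset E$ with $[E:\mathbb{Q}]=12=2\cdot 6$, and $\Lambda_1$ is non-degenerate. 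Meanwhile $\phi'(\beta)=a_2c_0+\phi'(\beta_0)$ with $c_0:=\phi'(\eta_2)=\alpha_0(\zeta_3^{-1}-\zeta_3)\neq0$, and $\phi'$ is injective on all of $\mathbb{M}$. I would dispose of the contribution of $\beta_0=0$ immediately (at most $\ll L\ll L^6$ values of $a_2$), and for $\beta_0\neq0$ set $A:=N_{E/\mathbb{Q}}(\phi(\beta_0))\neq0$ and, once $\beta_0$ is fixed, regard $B:=N_{E_0/\mathbb{Q}}(\phi'(\beta))$ as the value $P_{\beta_0}(a_2):=N_{E_0/\mathbb{Q}}(a_2c_0+\phi'(\beta_0))$ of a polynomial in $a_2$ of degree $12$ whose leading coefficient is the \emph{fixed} nonzero integer $N_{E_0/\mathbb{Q}}(c_0)$; both $A$ and $B$ are then nonzero of absolute value $\ll L^{12}$.

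\noindent Next I would run the standard squarefull/squarefree dichotomy on $d:=\gcd(A,B)\geq Z$: writing $d=q_dg_d$ with $q_d$ squarefree and $g_d$ squarefull, a prime-by-prime check gives $g_d\mid\mathrm{sqfull}(A)$, so either $\mathrm{sqfull}(A)\geq Z^{1/2}$ or $q_d\geq Z^{1/2}$. In the first case $\mathrm{sqfull}(N_{E/\mathbb{Q}}(\phi(\beta_0)))\geq Z^{1/2}$, and since $\Lambda_1$ has rank equal to half of $[E:\mathbb{Q}]$ and is non-degenerate by Lemma~\ref{preliminary}(3), Proposition~\ref{squarefull} bounds the number of admissible $\beta_0$ by $\ll_\varepsilon L^{6+\varepsilon}Z^{-\theta_1/2}$ for some $\theta_1>0$; multiplying by the $\ll L$ choices of $a_2$ gives $\ll_\varepsilon X^\varepsilon X^{7/8}Z^{-\theta_1/2}$, which accounts for the first term of the claimed bound.

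\noindent The main obstacle is the remaining case, where there is a squarefree $q\geq Z^{1/2}$ dividing both $A$ and $B$. Here I would fix $\beta_0\neq0$ and sum over the squarefree divisors $q\geq Z^{1/2}$ of $A$ (there are $\ll_\varepsilon L^\varepsilon$ of them), bounding for each the number of $a_2$ with $|a_2|\ll L$ and $q\mid P_{\beta_0}(a_2)$ by $\ll(L/q+1)\rho(q)$, where $\rho(q)$ is the number of roots of $P_{\beta_0}$ modulo $q$. The key point is that $q$ is squarefree and, for every prime $p\nmid N_{E_0/\mathbb{Q}}(c_0)$, the reduction $P_{\beta_0}\bmod p$ is a nonzero polynomial of degree $12$ and hence has at most $12$ roots, while the $O(1)$ primes dividing the fixed integer $N_{E_0/\mathbb{Q}}(c_0)$ contribute only a bounded factor; so $\rho(q)\ll_\varepsilon q^\varepsilon\ll_\varepsilon L^\varepsilon$. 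Since $q\geq Z^{1/2}$ this yields $\ll_\varepsilon L^\varepsilon(LZ^{-1/2}+1)$ values of $a_2$ per $\beta_0$, and summing over the $\ll L^6$ choices of $\beta_0\in\mathbb{M}_0$ gives $\ll_\varepsilon X^\varepsilon(X^{7/8}Z^{-1/2}+X^{3/4})$. Combining the two cases proves the lemma (in fact with $\theta=\min(\theta_1/2,1/2)$ and with the $Z^4$ term to spare). The steps I expect to require the most care are the uniform polynomial root count $\rho(q)\ll_\varepsilon q^\varepsilon$ — where one must control the primes dividing $N_{E_0/\mathbb{Q}}(c_0)$ and check that $P_{\beta_0}$ does not reduce to the zero polynomial modulo them — and verifying that $\Lambda_1$ genuinely satisfies the non-degeneracy hypothesis of Proposition~\ref{squarefull}; both rest on Lemma~\ref{preliminary} and on the explicit integral basis with $\eta_2=\zeta_3$.
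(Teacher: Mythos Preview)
Your argument is correct and in fact yields a sharper bound than the paper's (you never need the $Z^4$ term). The approaches differ substantially. The paper treats all seven coordinates $a_2,\dots,a_8$ symmetrically: it first shows the two norm forms $F_1,F_2$ are coprime over $\overline{\mathbb{Q}}$, then invokes Bhargava's geometric sieve to control the $\beta$ for which $F_1(\beta)$ and $F_2(\beta)$ share a prime factor exceeding some threshold $M$; for the remaining $\beta$ it applies Proposition~\ref{squarefull} to \emph{both} lattices $\Lambda_1,\Lambda_2$, and finally handles common squarefree divisors $r$ by an ideal-by-ideal lattice count $E_{\mathfrak{r}_1,\mathfrak{r}_2}$ (establishing that a certain $2\times 7$ coefficient matrix over $\mathbb{F}_p$ has rank $2$). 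Your route is more elementary: by isolating $a_2$ and exploiting $\sigma(\eta_2)=\eta_2$, you make $A=N_{E/\mathbb{Q}}(\phi(\beta))$ independent of $a_2$ while $B$ becomes a degree-$12$ polynomial in $a_2$ with fixed nonzero leading coefficient; this converts the common-divisor condition into a one-variable congruence and replaces both Bhargava's sieve and the rank-$2$ lattice argument by a straightforward root count $\rho(q)\ll_\varepsilon q^\varepsilon$. The trade-off is that your method leans on the specific feature that $\phi$ kills one coordinate cleanly, whereas the paper's argument is closer to the general template of \cite{Koymans} and would adapt more readily to situations without such a distinguished variable. Your stated caveats are accurate: the finitely many primes dividing $N_{E_0/\mathbb{Q}}(c_0)$ contribute only an absolute constant to $\rho(q)$ since $q$ is squarefree, and the non-degeneracy hypothesis for Proposition~\ref{squarefull} on $\Lambda_1\subset E$ is exactly Lemma~\ref{preliminary}(3).
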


\begin{proof}
We follow the same steps as in the proof of \cite[Lemma 3.2]{Koymans}. Define polynomials in $\mathbb{Z} \left[ X_2 , ..., X_8 \right]$ by
\begin{equation*}
F_1(X_2 ,..., X_8) := N_{E / \mathbb{Q} }( \alpha_0(\sigma(\eta_2)-\eta_2) X_2+\cdots + \alpha_0(\sigma(\eta_8)-\eta_8) X_8 )
\end{equation*}
\begin{equation*}
F_2(X_2 , ..., X_8):= N_{E_0/ \mathbb{Q}}(\alpha_0(\overline{\sigma}(\eta_2)-\eta_2)X_2+\cdots+\alpha_0(\overline{\sigma}(\eta_8)-\eta_8) X_8) 
\end{equation*}
If $\beta = a_2 \eta_2 +\cdots+ a_8 \eta_8$, we write $F_i(\beta)$ for $F_i(a_2, ..., a_8)$, and we set
\begin{equation*}
G(X,Z) := \# \left\{ \beta \in \mathbb{M}\,:\, \lvert a_i \rvert \ll X^{\frac{1}{8}},\,\gcd(F_1(\beta),F_2(\beta)) \geq Z \right\}.
\end{equation*}
We first observe that $F_1$ and $F_2$ are coprime over $\overline{\mathbb{Q}} \left[ X_2 , ..., X_8 \right]$. Indeed, $\sigma(\eta_2)-\eta_2 = 0$ as $\eta_2 = \zeta_3$ is fixed by $\sigma$. On the other hand, $\overline{\sigma }(\eta_2)-\eta_2= \zeta_3^{-1} - \zeta_3\neq 0$. We can factor both $F_1$ and $F_2$ into linear factors over $\overline{\mathbb{Q}}$ by writing the norms as products of Galois conjugates. We then see that each linear factor of $F_1$ has coefficient $0$ to $X_2$ whereas each linear factor of $F_2$ has a non-zero coefficient to $X_2$. Hence no linear factor of $F_1$ can be associate to a linear factor $F_2$ so they must be coprime over $\overline{\mathbb{Q}}$.

Next, we use this to count how often $F_1(\beta)$ and $F_2(\beta)$ have a large prime factor in common. Since $F_1$ and $F_2$ are coprime, we can argue as in the proof of \cite[Lemma 3.2]{Koymans} and use Bhargava's sieve \cite[Theorem 3.3]{bhargava} to see that for any $M>0$, we have
\begin{equation*}
\# \left\{ \beta \in \mathbb{M}\,:\,\lvert a_i \rvert  \leq X^{\frac{1}{8}},\,\exists \textrm{ prime } p \mid \gcd(F_1(\beta),F_2(\beta)),\,p>M\right\} \ll \frac{X^{\frac{7}{8}}}{M \log M} + X^{\frac{3}{4}}.
\end{equation*}
This gives a power saving when $M$ is a positive power of $X$.

For the remaining $\beta$, we factor $F_1(\beta)$ and $F_2(\beta)$ into the squarefull and squarefree parts: $F_i(\beta)=g_i q_i$ where $\gcd(g_i,q_i)=1$, $g_i$ is squarefull and $q_i$ is squarefree for $i=1,2$. By two applications of Proposition \ref{squarefull} with $M=E$ and $M=E_0$, Equation (\ref{normal_lattice_counting}) and Lemma \ref{preliminary} there is $\theta >0$ (depending only on $\eta_1 ,..., \eta_8$) such that 
\begin{equation*}
\# \left\{ \beta \in \mathbb{M}\,:\,\lvert a_i \rvert \ll X^{\frac{1}{8}},\, g_1 \geq A \textrm{ or }g_2 \geq A\right\} \ll_\varepsilon X^{\frac{7}{8}+\varepsilon}(A^{-\theta}+A^{-\frac{1}{7}})
\end{equation*}
for all $A>0$ and $\varepsilon >0$. Arguing in as in the proof of \cite[Lemma 3.2]{Koymans}, we have
\begin{equation*}
G(X,Z) \ll_\varepsilon \frac{X^{\frac{7}{8}}}{M \log M}+X^{\frac{3}{4}}+X^{\frac{7}{8}+\varepsilon}(A^{-\theta}+A^{-\frac{1}{7}})+G' \left( X, \frac{Z}{A^2},\frac{ZM}{A^2} \right)
\end{equation*}
where
\begin{equation*}
G' \left( X, \frac{Z}{A^2},\frac{ZM}{A^2} \right):= \# \left\{ \beta \in \mathbb{M}\,:\, \lvert a_i \rvert \ll X^{\frac{1}{8}},\,\exists r \mid \gcd(q_1 , q_2 ),\, \frac{Z}{A^2}<r \leq \frac{ZM}{A^2} \right\}.
\end{equation*}
We now estimate this quantity. Let $r$ be a squarefree integer. If $\mathfrak{r}_1$ and $\mathfrak{r}_2$ are ideals of $\mathcal{O}_{E}$ and $\mathcal{O}_{E_0}$ respectively with absolute norm $r$, we set
\begin{equation*}
E_{\mathfrak{r}_1 , \mathfrak{r}_2} := \# \left\{ \beta \in \mathbb{M}\,:\,\lvert a_i \rvert \ll  X^{\frac{1}{8}},\, \mathfrak{r}_1 \mid \phi(\beta),\, \mathfrak{r}_2 \mid \phi '(\beta)  \right\}
\end{equation*}
so that
\begin{equation*}
G' \left( X, \frac{Z}{A^2},\frac{ZM }{A^2} \right) \leq \sum_{\substack{
\frac{Z}{A^2} < r \leq \frac{ZM}{A^2}\\ r \textrm{ squarefree} 
}}\; \sum_{\substack{
\mathfrak{r}_1 , \mathfrak{r}_2 \\ N_{E/\mathbb{Q} }(\mathfrak{r}_1)=N_{E_0 / \mathbb{Q} }(\mathfrak{r}_2)=r 
}} E_{\mathfrak{r}_1 , \mathfrak{r}_2}.
\end{equation*}

When estimating $E_{\mathfrak{r}_1 , \mathfrak{r}_2}$, we need to be extra careful because, unlike in \cite{Koymans}, $\mathfrak{r}_1$ and $\mathfrak{r}_2$ are not ideals in the same ring. However, we can again take advantage of the fact that $\sigma(\eta_2)-\eta_2=0$ and $\overline{\sigma}(\eta_2)-\eta_2 \neq 0$. Split the coefficients $a_2 ,..., a_8$ according to their residue classes modulo $r$. Suppose $p$ is a prime factor of $r$, and let $\mathfrak{p}_1$ and $\mathfrak{p}_2$ be the unique prime factors of $\mathfrak{r}_1$ and $\mathfrak{r}_2$ respectively whose norm onto $\mathbb{Q} $ is $p$. Then for $\beta$ satisfying $\mathfrak{r}_1 \mid \phi(\beta)$ and $\mathfrak{r}_2 \mid \phi'(\beta)$, we have
\begin{equation*}
  \sum_{i=2}^{8} a_i(\alpha_0(\sigma(\eta_i)-\eta_i)) \equiv 0 \pmod{\mathfrak{p}_1}
\end{equation*}
\begin{equation*}
\sum_{i = 2}^8 a_i(\alpha_0(\overline{\sigma}(\eta_i)-\eta_i)) \equiv 0\pmod{\mathfrak{p}_2}
\end{equation*}
Since $\mathfrak{p}_1$ and $\mathfrak{p}_2$ have degree $1$ over $\mathbb{Q}$, we see that $(a_2 , ...,  a_8)$ satisfies a system of two linear equations over $\mathbb{F}_p$. We claim that when $\mathfrak{p}_1$ does not divide $\alpha_0(\sigma(\eta_3)-\eta_3)\neq 0$, and $\mathfrak{p}_2$ does not divide $\alpha_0(\overline{\sigma}(\eta_2)-\eta_2)\neq 0$, then the two equations are linearly independent over $\mathbb{F}_p$. Indeed, if this is the case, then since $\sigma(\eta_2)-\eta_2 =0$, the coefficient matrix takes the form
\begin{equation*}
\begin{pmatrix}
0&b_{12}&\cdots & b_{17}\\
b_{21}&b_{22}&\cdots & b_{27}\\
\end{pmatrix} \in M_{2 \times 7}(\mathbb{F}_p)
\end{equation*}
where $b_{12}$ and $b_{21}$ are non-zero elements of $\mathbb{F}_p$. This matrix clearly has full rank, so the equations are linearly independent. Hence there are $p^{7-2}=p^{5}$ possibilities for $a_2 ,..., a_8$ modulo $p$. For the prime dividing either of the two numbers above, the matrix can have rank $1$ or $0$, and we bound the number of solutions by $p^{7}$. Since there are only finitely many such primes, it follows by the Chinese remainder theorem that
\begin{equation*}
E_{\mathfrak{r}_1 , \mathfrak{r}_2} \ll r^{5} \left( \frac{X^{\frac{1}8{}}}{r}+1 \right)^7 \ll X^{\frac{7}{8}}r^{-2}+r^{5}.
\end{equation*}
We can now argue precisely as in \cite[p. 1745-1746]{Koymans} to achieve
\begin{equation*}
G' \left( X, \frac{Z}{A^2},\frac{ZM}{A^2} \right) \ll_\varepsilon X^{\varepsilon} \left( X^{\frac{7}{8}}\frac{A^2}{Z}+ \left( \frac{ZM}{A^2} \right)^{6} \right).
\end{equation*}
Taking $A=M=Z^{\frac{1}{3}}$ gives the desired bound. 
\end{proof}

\noindent
Estimating $A_{\square}(X, \rho)$ and $A_{0}(X, \rho)$ is now only a matter of putting everything together.

\begin{lemma}
There is $\vartheta >0$ such that $A_{\square}(X, \rho) \ll_\varepsilon X^{1-\vartheta + \varepsilon}$ for all $\varepsilon >0$.
\end{lemma}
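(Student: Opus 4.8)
The plan is to bound $A_{\square}(X,\rho)$ trivially by a count of admissible $\beta\in\mathbb{M}$, and to extract a power saving from the divisibility condition \eqref{firkant} together with Proposition \ref{squarefull}, Lemma \ref{schmidt} and Lemma \ref{gcd}. First I would dispose of large moduli. Writing $m:=N(\mathfrak{m})$, any $\alpha\in\mathcal{D}(X)$ has $N((\alpha))\le X$, so $A(X,\rho)=0$ unless $m\le X$; we may thus assume $m\le X$. Let $m_0$ be the positive generator of $\mathfrak{m}\cap\mathbb{Z}$; since $\mathfrak{m}\mid(m_0)$ we get $m\mid m_0^{8}$, hence $m_0\ge m^{1/8}$, and for fixed $\beta$ the congruence $a+\beta\equiv0\bmod\mathfrak{m}$ confines $a$ to one residue class modulo $m_0$, so $|T(X,\rho,\beta)|\ll X^{1/8}/m_0+1$. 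As there are $\ll X^{7/8}$ elements $\beta\in\mathbb{M}$ with $|a_i|\ll X^{1/8}$, if $m>X^{\kappa}$ for a small fixed $\kappa>0$ then $m_0>X^{\kappa/8}$ and
\[
|A_{\square}(X,\rho)|\ll X^{7/8}\bigl(X^{1/8}/m_0+1\bigr)\ll X^{1-\kappa/8}+X^{7/8},
\]
a power saving. From now on I would assume $m\le X^{\kappa}$ and use the crude bound $|T(X,\rho,\beta)|\ll X^{1/8}$; it then suffices to show that the number of $\beta\in\mathbb{M}$ with $|a_i|\ll X^{1/8}$ satisfying \eqref{firkant} is $\ll_\varepsilon X^{7/8-\vartheta+\varepsilon}$ for some $\vartheta>0$.

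Next I would unpack \eqref{firkant}. Put $n_E:=N_{E/\mathbb{Q}}(\phi(\beta))$ and $n_{E_0}:=N_{E_0/\mathbb{Q}}(\phi'(\beta))$, which are positive integers for the $\beta$ in question (as $\phi(\beta),\phi'(\beta)\ne0$ and $E,E_0$ are totally complex), and set $C_0:=F_0\,|N_{K'/\mathbb{Q}}(\alpha_0)|$, a fixed positive integer. Factor $n_E=g_1q_1$ with $g_1$ squarefull, $q_1$ squarefree and $\gcd(g_1,q_1)=1$. If $p\mid q_1$ then $p\| n_E$, so \eqref{firkant} forces $p^{2}\mid mC_0n_En_{E_0}$ and hence $p\mid mC_0n_{E_0}$; thus every prime divisor of $q_1$ divides $mC_0\cdot\gcd(n_E,n_{E_0})$, and, $q_1$ being squarefree, $q_1\le mC_0\gcd(n_E,n_{E_0})$. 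Therefore
\[
\gcd(n_E,n_{E_0})\;\ge\;\frac{q_1}{mC_0}\;=\;\frac{n_E}{g_1\,mC_0}\;\ge\;\frac{n_E}{g_1\,C_0\,X^{\kappa}}.
\]
I would then fix small positive powers $Y=X^{\mu}$ and $Z=X^{\nu}$ of $X$, to be optimised, and split the count of admissible $\beta$ into the three ranges (a) $g_1\ge Z$; (b) $g_1<Z$ and $n_E\le Y$; (c) $g_1<Z$ and $n_E>Y$.

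For range (a) I would apply Proposition \ref{squarefull} to the rank-$6$ lattice $\Lambda_1=\phi(\mathbb{M})\subset\mathcal{O}_E$ inside the degree-$12$ field $E$, whose hypotheses are exactly Lemma \ref{preliminary}(1) and (3), with $L\asymp X^{1/8}$; since $\ker\phi=\mathbb{Z}\eta_2$, each $\lambda\in\Lambda_1$ has $\ll X^{1/8}$ preimages $\beta$ with $|a_i|\ll X^{1/8}$, so this range contributes $\ll_\varepsilon X^{1/8}\cdot X^{6/8+\varepsilon}Z^{-\theta}=X^{7/8+\varepsilon}Z^{-\theta}$. For range (b) I would use Lemma \ref{schmidt} for $\Lambda_1$: there are $\ll_\varepsilon|a|^{\varepsilon}X^{\varepsilon}$ vectors $\lambda\in\Lambda_1$ with $|a_i|\ll X^{1/8}$ and $N_{E/\mathbb{Q}}(\lambda)=a$, so summing over $1\le a\le Y$ and multiplying by the $\ll X^{1/8}$ preimages gives $\ll_\varepsilon X^{1/8}Y^{1+\varepsilon}X^{\varepsilon}$. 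For range (c) the displayed inequality gives $\gcd(n_E,n_{E_0})>Y/(C_0ZX^{\kappa})$, so Lemma \ref{gcd} applied with that value bounds this range by
\[
\ll_\varepsilon X^{\varepsilon}\Bigl(X^{7/8}\bigl(C_0ZX^{\kappa}/Y\bigr)^{\theta}+X^{3/4}+\bigl(Y/(C_0ZX^{\kappa})\bigr)^{4}\Bigr).
\]
Multiplying the three contributions by the trivial factor $X^{1/8}$ and choosing $\kappa,\nu$ small and $\mu$ so that $Y/(ZX^{\kappa})$ is a positive power of $X$ which is at the same time $\ll X^{7/32-\varepsilon}$ (and $\mu<3/4$), each of the resulting terms becomes $X^{1-\vartheta+\varepsilon}$ for a suitable $\vartheta>0$; combined with the first paragraph this proves the lemma. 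The main obstacle is precisely this balancing of parameters: in range (c) the only available lower bound for $\gcd(n_E,n_{E_0})$ is $n_E/(g_1mC_0)$, so the modulus $m$ must be threaded through (hence the reduction to $m\le X^{\kappa}$), while the $Z^{4}$ term of Lemma \ref{gcd} forces $Y$ to be a rather small power of $X$ — which is why the sharper lattice-point count of Proposition \ref{squarefull} for the rank-exactly-half lattice $\Lambda_1$, rather than \eqref{normal_lattice_counting}, is indispensable in range (a).
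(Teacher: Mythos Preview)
Your argument is correct and uses the same three tools as the paper (Proposition~\ref{squarefull}, Lemma~\ref{schmidt}, Lemma~\ref{gcd}), but the organization is genuinely different. The paper splits according to the size of $\gcd(n_E,n_{E_0})$ and of $\mathrm{sqfull}(n_E)$: when both are small, condition~\eqref{firkant} forces the squarefree part $q$ of $n_E$ that is coprime to $mC_0$ to divide $n_{E_0}$, hence $q<Z$, so $n_E=gqr$ takes only $\ll_\varepsilon Y^{1/2}Z\,X^{\varepsilon}$ values (the factor $r$ supported on primes of $mC_0$ contributing $\ll_\varepsilon X^{\varepsilon}$), and one concludes via Lemma~\ref{schmidt}. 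You instead first cut down to $m\le X^{\kappa}$ by a direct progression estimate for $T(X,\rho,\beta)$, then split by $(g_1,n_E)$ and in range~(c) deduce from \eqref{firkant} the inequality $q_1\le mC_0\gcd(n_E,n_{E_0})$, hence $\gcd(n_E,n_{E_0})\ge n_E/(g_1 mC_0)$, feeding this into Lemma~\ref{gcd}.

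What each buys: your reduction to small $m$ is an extra step, but it sidesteps the (correct but slightly delicate) bound on the number of $r\mid(mC_0)^{\infty}$ with $r\ll X^{3/2}$ that the paper needs. Your range~(b) is cruder than the paper's case~(i) --- you simply sum Lemma~\ref{schmidt} over all values $\le Y$, costing a full factor $Y$ rather than $Y^{1/2}Z$ --- which forces $\mu<3/4$, whereas the paper can take $Y,Z$ to be arbitrarily small powers of $X$. Since only \emph{some} power saving is required, this costs nothing for the lemma as stated. The parameter balancing you sketch (e.g.\ $\kappa,\nu$ small and $\nu+\kappa<\mu<\min(3/4,\nu+\kappa+7/32)$) is easily satisfiable, so the proof closes. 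The remark that $E$ and $E_0$ are totally complex is correct: both sit inside the CM field $K'=K(\zeta_3)$, complex conjugation is the single element $x\mapsto\overline{x}$, and it lies in neither $\langle\sigma\rangle$ nor $\langle\overline{\sigma}\rangle$ since $\sigma\neq 1$.
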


\begin{proof}
For positive reals $Y$ and $Z$ to be determined, we write
\begin{equation*}
A_{\square}(X, \rho) = A_{\square}'(X,\rho)+A_{\square}''(X,\rho)+ A_{\square}'''(X,\rho)
\end{equation*}
where $A_{\square}'(X,\rho)$, $A''_{\square}(X, \rho)$ and $A '''_{\square} (X,\rho)$ denote the contribution from $\beta$ satisfying
\begin{itemize}
\item $\gcd(N_{E/ \mathbb{Q} }(\phi(\beta)), N_{E_0 / \mathbb{Q}}(\phi'(\beta))) <Z $ and $ \mathrm{sqfull}(N_{E/\mathbb{Q}}(\phi(\beta))) < Y$
\item $\gcd(N_{E/ \mathbb{Q} }(\phi(\beta)), N_{E_0 / \mathbb{Q}}(\phi'(\beta))) <Z$ and $ \mathrm{sqfull}(N_{E/\mathbb{Q}}(\phi(\beta))) \geq Y$
\item $\gcd(N_{E/\mathbb{Q} }(\phi(\beta)),N_{E_0 / \mathbb{Q} }(\phi '(\beta))) \geq Z$
\end{itemize}
respectively. By Proposition \ref{squarefull}, it follows that $A_{\square} ''(X,\rho) \ll_\varepsilon X^{1+\varepsilon}Y^{-\theta''}$ for some $\theta ''>0$, and by Lemma \ref{gcd}, it follows that there is $\theta '''>0$ such that $A_{\square}'''(X,\rho) \ll_\varepsilon X^{\varepsilon}(X^{\frac{7}{8}}Z^{-\theta'''}+X^{\frac{3}{4}}+Z^4)$. Hence we only need to estimate $A_{\square}'(X,\rho)$. Let $\beta$ satisfy the conditions defining $A_{\square}'(X,\rho)$, and write $N_{E/ \mathbb{Q}}(\phi(\beta))=gqr$ where
\begin{itemize}
\item $g$ is squarefull and coprime to $m N_{K/ \mathbb{Q}}(\alpha_0) F_0$;
\item $q$ is squarefree and coprime to $m N_{K/ \mathbb{Q}}(\alpha_0)F_0$;
\item $g$ and $q$ are coprime;
\item $r$ divides $(m N_{K/ \mathbb{Q} }(\alpha_0)F_0)^\infty$.
\end{itemize}
By assumption $g <Y$, and condition \eqref{firkant} in Lemma \ref{rephrasing} forces $q$ to divide $N_{E_0/\mathbb{Q} }(\phi '(\beta))$ so $q <Z$. Hence there are $\ll Y^{1/2}$ possibilities for $g$, and $\ll Z$ possibilities for $q$.  Since $m \leq X$, there are at most $ \ll_\varepsilon X^\varepsilon$ possibilities for $r$ for any $\varepsilon >0$. This leaves at most $\ll_\varepsilon X^\varepsilon Y^{1/2}Z$ possibilities for the value of $N_{E/ \mathbb{Q} }(\phi(\beta))$. Since $N_{E /\mathbb{Q}}(\phi(\beta)) \ll X^{\frac{3}{2}}$, it follows by Lemma \ref{schmidt} that $A'_{\square}(X,\rho) \ll_\varepsilon X^{\varepsilon}Y^{1/2}Z$ for any $\varepsilon>0$. Choosing $Y=X^{\delta_1}$ and $Z=X^{\delta_2}$ for $\delta_1$ and $\delta_2$ suitably small completes the proof. 
\end{proof}

We now estimate $A_{0}(X, \rho)$ and hence complete the proof of Proposition \ref{sums_of_type_I} 

\begin{lemma}
Assume Conjecture $C_{12}$. Then there is $\vartheta >0$ such that $A_0(X, \rho) \ll_\varepsilon X^{1-\vartheta + \varepsilon}$ for all $\varepsilon >0$.
\end{lemma}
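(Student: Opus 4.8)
The plan is to sum the bound from alternative (i) of Lemma~\ref{rephrasing} over all the $\beta$ contributing to $A_0(X,\rho)$, after splitting the range of $\beta$ according to the size of the squarefull parameter $g_0$. Recall that $A_0(X,\rho)=\sum_{\beta}\mu_{\rho,\beta}T(X,\rho,\beta)$, where the sum runs over $\beta=a_2\eta_2+\cdots+a_8\eta_8\in\mathbb{M}$ with $a+\beta\in\mathcal{D}(X)$ for some $a\in\mathbb{Z}$, $\sigma(\beta)-\beta\neq0$, $\overline{\sigma}(\beta)-\beta\neq0$, and such that case (i) of Lemma~\ref{rephrasing} holds, i.e. $T(X,\rho,\beta)\ll_{\varepsilon}g_0X^{1/8-\delta+\varepsilon}$. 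By Lemma~\ref{fundamental_domain}(3) every such $\beta$ satisfies $|a_i|\ll X^{1/8}$, so there are $\ll X^{7/8}$ of them, and for each the inner variable $a$ runs over $\ll1$ intervals of length $\ll X^{1/8}$; in particular $|T(X,\rho,\beta)|\ll X^{1/8}$ unconditionally.

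I would fix $Z:=X^{\kappa}$ with $\kappa>0$ to be chosen at the end. For the $\beta$ with $g_0\le Z$, case (i) gives $|T(X,\rho,\beta)|\ll_{\varepsilon}g_0X^{1/8-\delta+\varepsilon}\le X^{1/8-\delta+\kappa+\varepsilon}$, and summing over the $\ll X^{7/8}$ such $\beta$ contributes $\ll_{\varepsilon}X^{1-\delta+\kappa+\varepsilon}$.

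For the $\beta$ with $g_0>Z$ I would use only the trivial bound $|T(X,\rho,\beta)|\ll X^{1/8}$. By construction $\mathfrak{g}$ is a divisor of $\mathfrak{c}$ with squarefull norm, $g_0=N_{E/\mathbb{Q}}(\mathfrak{g}_0)\le N_{E/\mathbb{Q}}(\mathfrak{g})$, and $N_{E/\mathbb{Q}}(\mathfrak{g})$ is a squarefull divisor of $N_{E/\mathbb{Q}}(\phi(\beta))$ (since $\mathfrak{g}$ divides the ideal of $\mathcal{O}_E$ underlying $\phi(\beta)\mathcal{O}_{K'}$), so $g_0>Z$ forces $\mathrm{sqfull}(N_{E/\mathbb{Q}}(\phi(\beta)))>Z$. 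By Lemma~\ref{preliminary}(1),(3) the lattice $\Lambda_1=\phi(\mathbb{M})\subset E$ has rank $6=\tfrac12[E:\mathbb{Q}]$ and no scalar multiple of it lies in a proper subfield of $E$, so Proposition~\ref{squarefull} applies with $M=E$, $n=6$, $L\asymp X^{1/8}$, and this $Z$; since the coordinates of $\phi(\beta)$ with respect to the basis $\phi(\eta_3),\dots,\phi(\eta_8)$ of $\Lambda_1$ are exactly $a_3,\dots,a_8$ while $a_2$ ranges over $\ll X^{1/8}$ values, I get
\begin{equation*}
\#\{\beta:\,|a_i|\ll X^{1/8},\ g_0>Z\}\ll_{\varepsilon}X^{1/8}\cdot X^{3/4+\varepsilon}Z^{-\theta}=X^{7/8+\varepsilon}Z^{-\theta}
\end{equation*}
for some $\theta>0$. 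Hence this range contributes $\ll_{\varepsilon}X^{1/8}\cdot X^{7/8+\varepsilon}Z^{-\theta}=X^{1+\varepsilon-\kappa\theta}$.

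Combining, $A_0(X,\rho)\ll_{\varepsilon}X^{1-\delta+\kappa+\varepsilon}+X^{1+\varepsilon-\kappa\theta}$, and choosing $\kappa=\delta/2$ makes both exponents strictly below $1$, so that $A_0(X,\rho)\ll_{\varepsilon}X^{1-\vartheta+\varepsilon}$ holds with, say, $\vartheta=\tfrac14\min(\delta,\delta\theta)>0$; together with the bound on $A_{\square}(X,\rho)$ this proves Proposition~\ref{sums_of_type_I}. The only nontrivial ingredient — and the reason Conjecture $C_{12}$ alone (rather than $C_{24}$) suffices — is the treatment of the large-$g_0$ range: the lattice $\Lambda_1$ has rank exactly half the degree of $E$, which is precisely the degenerate case in which the older counting estimate \cite[Lemma~3.1]{Koymans} is vacuous, so the whole step rests on Proposition~\ref{squarefull}. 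No further obstacle is expected; the remaining manipulations are routine bookkeeping with the two power-saving exponents.
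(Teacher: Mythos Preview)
Your proof is correct and follows essentially the same approach as the paper: split according to the size of the squarefull parameter, apply case (i) of Lemma~\ref{rephrasing} on the small range, and invoke Proposition~\ref{squarefull} (via Lemma~\ref{preliminary}) on the large range. The only cosmetic difference is that the paper splits on $\mathrm{sqfull}(N_{E/\mathbb{Q}}(\phi(\beta)))$ rather than on $g_0$ directly, but your observation that $g_0\le N_{E/\mathbb{Q}}(\mathfrak{g})\mid\mathrm{sqfull}(N_{E/\mathbb{Q}}(\phi(\beta)))$ makes the two splittings equivalent, and your explicit handling of the free $a_2$-coordinate (accounting for the extra $X^{1/8}$) is a detail the paper leaves implicit.
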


\begin{proof}
Let $Z>0$, and write $A_0(X,\rho)= A_1(X, \rho)+ A_2(X, \rho)$ where $A_1(X, \rho)$ denotes the contribution from $\beta$ satisfying $\mathrm{sqfull}(N_{E / \mathbb{Q} }(\phi(\beta)))<Z$, and $A_2(X,\rho)$ is the contribution from the remaining $\beta$. When $\mathrm{sqfull}(N_{E/ \mathbb{Q} }(\phi(\beta))) <Z$, the number $g_0$ in condition (1) of Lemma \ref{rephrasing} satisfies $g_0 \ll Z$. Assuming Conjecture $C_{12}$, there is $\delta>0$ such that $T(X,\rho, \beta) \ll_\varepsilon  Z X^{\frac{1}{8}-\delta +\varepsilon}$ for each $\rho$ modulo $F_0$. The number of possibilities for $\beta$ is bounded by $X^{\frac{7}{8}}$, so it follows that $A_1(X,\rho) \ll_\varepsilon X^{1-\delta+\varepsilon}Z$. By Proposition \ref{squarefull}, $A_2(X,\rho) \ll_\varepsilon X^{1 +\varepsilon}Z^{-\theta}$ for some $\theta >0$. Choosing $Z=X^{\frac{\delta}{2}}$ completes the proof.
\end{proof}

\begin{remark}\label{gap}
In the course of writing this paper, we discovered a minor gap in \cite{imrn} where the field lowering technique was first introduced. The problem occurs on page 7423 when estimating the sum $A_{\square}(x;\rho, u_i)$, the analogue of our $A_{\square}(X,\rho)$. When bounding $A_{\square}(x;\rho,u_i)$ by a sum over integers $b$ satisfying the condition $p \mid b\Rightarrow p^2 \mid m d F b$, it is not taken into account that the integer $d$ depends on $b$. We found two other papers relying on this argument, \cite{peter} and \cite{piccolo}, but in all three cases the gap can be fixed by an argument that is very similar to ours.
\end{remark}

\section{Sums of type II}

We now prove Proposition \ref{sums_of_type_II}, taking the same approach as in \cite{peter}. For fixed $i \in \left\{ 1,..., h_0 \right\}$, $\mathfrak{M}\subset \mathcal{O}_{F'}$ and $\mu\in(\mathcal{O}_{F'}/\mathfrak{M}\mathcal{O}_{F'})^{\times}$, we must estimate
\begin{equation*}
\sum_{N(\mathfrak{m}) \leq M} \sum_{N(\mathfrak{n}) \leq N} v_{\mathfrak{m}} w_{\mathfrak{n}}r_i(\mathfrak{m} \mathfrak{n},\mathfrak{M},\mu) s_{\mathfrak{m} \mathfrak{n}}
\end{equation*}
where $(v_{\mathfrak{m}})_{\mathfrak{m}}$ and $(w_{\mathfrak{n}})_{\mathfrak{n}}$ are sequences of complex numbers of modulus at most $1$. Let $N_{F'/F}(\mathfrak{m}) =(m)$ and $N_{F'/F}(\mathfrak{n}) =(n)$ for some $m,n \in \mathcal{O}_{F}$. Then
\begin{equation*}
  s_{\mathfrak{m} \mathfrak{n}} = \left( \frac{\sigma(m)}{\mathfrak{m} \mathcal{O}_{K'}} \right)_{3,K'} \left( \frac{\sigma(n)}{ \mathfrak{n} \mathcal{O}_{K'}} \right)_{3,K'} \left( \frac{\sigma(m)}{\mathfrak{n} \mathcal{O}_{K'}} \right)_{3,K'} \left( \frac{\sigma(n)}{\mathfrak{m} \mathcal{O}_{K'}} \right)_{3,K'}.
\end{equation*}
The first two factors can be absorbed into $v_{\mathfrak{m}}$ and $w_{\mathfrak{n}}$. Moreover, $r_i(\mathfrak{m} \mathfrak{n},\mathfrak{M},\mu)= 1$ if and only if $r_k(\mathfrak{m},\mathfrak{M},\mu')=r_l(\mathfrak{n},\mathfrak{M},\mu'')=1$ for some $k,l \in \left\{ 1,...,h_0 \right\}$ such that $\mathfrak{p}_k \mathfrak{p}_l $ and $\mathfrak{p}_i$ represent the same class is $H_0$, and some $\mu',\mu''\in(\mathcal{O}_{F'}/\mathfrak{M}\mathcal{O}_{F'})^{\times}$. Hence it is enough to consider sums of the type
\begin{equation*}
\sum_{N(\mathfrak{m}) \leq M} \sum_{N(\mathfrak{n}) \leq N} v_{\mathfrak{m} } w_{\mathfrak{n}} r_k(\mathfrak{m},\mathfrak{M},\mu') r_l(\mathfrak{n},\mathfrak{M},\mu'') \left( \frac{\sigma(m)}{\mathfrak{n} \mathcal{O}_{K'}} \right)_{3,K'} \left( \frac{\sigma(n)}{ \mathfrak{m} \mathcal{O}_{K'}} \right)_{3,K'}
\end{equation*}
We can write $\mathfrak{m} \mathfrak{p}_k =(m')$ and $\mathfrak{n} \mathfrak{p}_l =(n ')$ for some $m ', n' \in \mathcal{O}_{F'}$. Replacing $m$ and $n$ with associate elements if necessary, we can assume that $m' \overline{m'}=\pi_k m$, and $n' \overline{n'}=\pi_l n$ where $\pi_k$ and $\pi_l$ are generators of $N_{F'/F}(\mathfrak{p}_k)$ and $N_{F'/F}(\mathfrak{p}_l)$ respectively. We now find that 
\begin{equation*}
\left( \frac{\sigma(m')\overline{\sigma}(m')}{n'} \right)_{3,K'} = \left( \frac{\sigma(m)}{\mathfrak{n} \mathcal{O}_{K'}} \right)_{3,K'} \left( \frac{\sigma(m)}{\mathfrak{p}_l \mathcal{O}_{K'}} \right)_{3,K'} \left(\frac{\sigma(\pi_k)}{n'}\right)_{3,K'},
\end{equation*}
and the last two factors depend only on $m'$ and $n'$ modulo $F_0$ by cubic reciprocity. Choosing $m'$ and $n'$ to lie in the fundamental domain $\mathcal{D}$, it follows that it suffices to estimate the sums
\begin{equation*}
\sum_{\substack{
\alpha \in \mathcal{D}(X)\\ \alpha \equiv \rho_1 \bmod{F_0} }} \sum_{\substack{
\beta \in \mathcal{D} (Y)\\ \beta \equiv \rho_2 \bmod{F_0 
}}} v_{\alpha} w_{\beta} \left( \frac{\sigma(\alpha) \overline{\sigma}(\alpha)}{\beta} \right)_{3,K'}\left( \frac{\sigma(\beta) \overline{\sigma}(\beta)}{\alpha} \right)_{3,K'}
\end{equation*}
where $\rho_1$ and $\rho_2$ are fixed residue classes modulo $F_0$. Since $\sigma$ fixes $\zeta_3$, it follows by cubic reciprocity that
\begin{equation*}
\left( \frac{\sigma(\alpha)}{\beta} \right)_{3,K'} = \left( \frac{\alpha}{\sigma(\beta)} \right)_{3,K'}= \mu_1 \left( \frac{\sigma(\beta)}{\alpha} \right)_{3,K'}
\end{equation*}
and
\begin{equation*}
\left( \frac{\overline{\sigma}(\alpha)}{ \beta} \right)_{3,K'} = \left( \frac{\alpha }{\overline{\sigma}(\beta)} \right)_{3,K'}^{-1} = \mu_2 \left( \frac{\overline{\sigma }(\beta)}{\alpha} \right)_{3,K'}^{-1} 
\end{equation*}
for some $\mu_1$ and $\mu_2$ only depending on $\rho_1$ and $\rho_2$, thus the problem reduces to estimating
\begin{equation*}
\sum_{\substack{
\alpha \in \mathcal{D}(X)\\ \alpha \equiv \rho_1 \bmod{F_0} }
} \sum_{\substack{
\beta \in \mathcal{D} (Y)\\ \beta \equiv \rho_2 \bmod{F_0} 
}} v_{\alpha} w_{\beta} \gamma(\alpha , \beta) \quad \textrm{where} \quad \gamma(\alpha , \beta)= \left( \frac{\sigma(\beta)}{\alpha} \right)_{3,K'}.
\end{equation*}
As in \cite{peter}, we can handle this expression using \cite[Proposition 4.3]{Koymans_Rome_2024}. We must specify a couple of parameters to use this result. Let $M $ denote the product of $F_0$ and the index $\left[\mathcal{O}_{K'}: \mathcal{O}_{F'} \sigma(\mathcal{O}_{F'}) \right]$. This index is finite because $K' = F' \sigma(F')$, so the extension $K'/\mathbb{Q} $ has a primitive element of the form $\sum a_i b_i$ where $a_i \in F'$ and $b_i \in \sigma(F')$. Multiplying this by a non-zero integer, we can assume $a_i$ and $b_i$ are integral so the index must indeed be finite. We also define $A_{\textrm{bad}}$ as the set of squarefull numbers. 

To get a non-trivial power saving, we just have to verify that $\gamma$ satisfies the properties (P1), (P2) and (P3) listed in \cite[p. 1314]{Koymans_Rome_2024}. (P1) is just the statement that $\gamma$ is multiplicative in each of its arguments which is clear in our case. (P3) asserts that $\# A_{\textrm{bad}}\cap [1,X]  \leq c_1 X^{1-c_2}$ for some absolute constants $c_1>0$ and $0 < c_2 <1$. It is well-known that such constants exist (we can take $c_2 =1/2$ and $c_1 $ some sufficiently large positive integer). Hence it only remains to prove that (P2) holds.\\

\noindent
To verify (P2), we must first show that if $w, z_1 , z_2 \in \mathcal{O}_{F'}$ are coprime to $M$ then $z_1 \equiv z_2 \pmod{MN_{F'/\mathbb{Q} }(w)}$ implies $\gamma(w,z_1) = \gamma(w, z_2)$. 
This is clear from the definition of $\gamma$. Finally, we must show that if $|N_{F'/ \mathbb{Q}}(w)| \notin A_{\textrm{bad}}$, then $z \mapsto \gamma(w,z)$ is a non-principal character on $(\mathcal{O}_{F'}/ M N_{F'/\mathbb{Q}}(w) \mathcal{O}_{F'})^\times$. The condition $N_{F'/\mathbb{Q}}(w) \notin A_{\textrm{bad}}$ means that the ideal $(w)$ has a degree $1$ prime factor $\mathfrak{p}$ dividing it exactly once. By the Chinese remainder theorem, it is enough to show that
\begin{equation*}
z \mapsto \left( \frac{\sigma(z)}{\mathfrak{p} \mathcal{O}_{K'}} \right)_{3,K'}
\end{equation*}
is a non-principal character on $(\mathcal{O}_{F'}/ \mathfrak{p} \mathcal{O}_{F'})^\times$. If $p := N_{F'/ \mathbb{Q}}(\mathfrak{p})$, the quotient $\mathcal{O}_{K'}/ \mathfrak{p} \mathcal{O}_{K'}$ is an $\mathbb{F}_p$-vector space of dimension $3$. Since the index $\left[\mathcal{O}_{K'}: \mathcal{O}_{F'} \sigma(\mathcal{O}_{F'}) \right]$ is finite and coprime to $p$, $\mathcal{O}_{F'}\sigma (\mathcal{O}_{F'})$ surjects onto $\mathcal{O}_{K'}/ \mathfrak{p} \mathcal{O}_{K'}$. Because $\mathfrak{p}$ has degree $1$ over $\mathbb{Q}$, $\mathcal{O}_{F'}$ and $\mathbb{Z}$ have the same image in $\mathcal{O}_{K'}/ \mathfrak{p} \mathcal{O}_{K'}$, and since $\sigma(\mathcal{O}_{F'})$ is already a $\mathbb{Z}$-algebra, it follows that $\mathcal{O}_{F'} \sigma(\mathcal{O}_{F'})$ and $\sigma(\mathcal{O}_{F'})$ have the same image in $\mathcal{O}_{K'}/ \mathfrak{p} \mathcal{O}_{K'}$, i.e. $\sigma(\mathcal{O}_{F'})$ surjects onto $\mathcal{O}_{K'}/ \mathfrak{p} \mathcal{O}_{K'}$. Now we are done because $p \equiv 1 \pmod{3}$, and $\mathfrak{p}$ is unramified in $K'$ because $p \nmid F_0$, so the residue symbol
\begin{equation*}
\left( \frac{\bullet}{\mathfrak{p} \mathcal{O}_{K'}} \right)_{3,K'}:(\mathcal{O}_{K'}/\mathfrak{p} \mathcal{O}_{K'})^\times  \rightarrow \left\{ 1, \zeta_3 ,\zeta_3^2 \right\}
\end{equation*}
is not identically equal to $1$.

We have now verified that (P1)-(P3) hold so by \cite[Proposition 4.3]{Koymans_Rome_2024} we have
\begin{equation*}
\sum_{\substack{
    \alpha \in \mathcal{D}(X)\\
    \alpha \equiv \rho_1 \bmod{N(\mathfrak{f}^\ast)}
}}\, \sum_{\substack{
  \beta \in \mathcal{D}(Y)\\
  \beta \equiv \rho_2 \bmod{N(\mathfrak{f}^\ast)}
  }} v_{\alpha} w_{\beta} \gamma(\alpha , \beta) \ll_\varepsilon(X+Y)^{\frac{1}{48}}(X Y)^{1-\frac{1}{48}+\varepsilon}
\end{equation*}
for any $\varepsilon >0$ where the implied constant depends only on the number field $F'$ and the constants $M$, $C_1$ and $C_2$. This is what we wanted.

\section{Level-raising of even Galois representations}
\label{proofs_for_even_representations}
\noindent 
In this section, we give the proof of Corollary \ref{selmerdensity}. First, some necessary preliminaries are recorded.\\ 

\noindent
The adjoint representation of $\SL_2(\F_3)$ on its Lie algebra of traceless matrices becomes a Galois module when composed with $\overline{\rho}$, denoted $\operatorname{Ad}^0(\overline{\rho})$. Let $\operatorname{Pl}_\Q$ be the set of places in $\Q$, including the archimedean place, $\infty$. For any $S \subseteq \operatorname{Pl}_\Q$, let $\Q_S \subseteq \overline{\Q}$ be the maximal extension of $\Q$ unramified at all $v\notin S$, and let $G_S := \Gal(\Q_S/\Q)$. 
For any $v\in \operatorname{Pl}_\Q$, let $G_v:=\Gal(\overline{\Q}_v/\Q_v)$.
For subspaces $\mathcal{N}_v \subseteq H^1(G_v, \operatorname{Ad}^0(\overline{\rho}))$, 
define the Selmer group 
\begin{equation} \label{sel}
H^1_{\mathcal{N}}(G_{S}, \operatorname{Ad}^0(\overline{\rho}))  =
\ker \left( 
H^1(G_S, \operatorname{Ad}^0(\overline{\rho})) \xrightarrow[]{} \bigoplus_{v\in S} H^1(G_v,\operatorname{Ad}^0(\overline{\rho}))/\mathcal{N}_v
\right)
\end{equation}
For the dual module $\operatorname{Ad}^0(\overline{\rho})^* = \operatorname{Hom}( \operatorname{Ad}^0(\overline{\rho}), \F_3)$,
denote
the annihilator of $\mathcal{N}_v$
under the local pairing as $\mathcal{N}_v^\perp$. The dual Selmer group is defined as 
\begin{equation} \label{dsel}
H^1_{\mathcal{N}^\perp}(G_{S}, \operatorname{Ad}^0(\overline{\rho})^*)
=
\ker \left(
H^1(G_S, \operatorname{Ad}^0(\overline{\rho})^*) \xrightarrow[]{} \bigoplus_{v\in S} H^1(G_v, \operatorname{Ad}^0(\overline{\rho})^*)/\mathcal{N}^\perp_v
\right)
\end{equation}
If
\begin{equation}\label{balancedglobal}
  \dim H^1_{\mathcal{N}}(G_{S}, \operatorname{Ad}^0(\overline{\rho}) )
  = 
  \dim H^1_{\mathcal{N}^\perp}(G_{S}, \operatorname{Ad}^0(\overline{\rho})^* ),
\end{equation}
we say that the 
\emph{global setting is balanced at $S$} and refer to 
the common rank of the Selmer and dual Selmer group as the rank of the global setting.\\

\noindent 
Next, we will choose the spaces $\mathcal{N}_v$
for each $v\in S$ in a particular way: 
Assume that the local versal deformation ring $R_v$ has a smooth quotient
\begin{equation} \label{smooth1}
R_v \to \Z_3[[T_1,\dots,T_{n_v}]]
\end{equation}
with tangent space $\mathcal{N}_v$ 
such that for any tame prime in $S$ we have 
\[
\dim \mathcal{N}_v = \dim H^0(G_v, \operatorname{Ad}^0(\overline{\rho}))
\]
and such that  
\[
\dim \mathcal{N}_3 +\dim \mathcal{N}_\infty 
=\dim H^0(G_3, \operatorname{Ad}^0(\overline{\rho}))+ \dim H^0(G_\infty, \operatorname{Ad}^0(\overline{\rho})). 
\]
For each $v\in S$, 
let $\mathcal{C}_v$ be the class of deformations of $\overline{\rho}|_{G_v}$ that factor through \eqref{smooth1}.
Consider an irreducible representation
\[
    \rho: \Gal(\overline{\Q}/\Q) \to \SL_2(\Z_3)
\]
unramified outside $S$ such that $\rho|_{G_v} \in \mathcal{C}_v$ for all $v\in S$. 
Let $\overline{\rho} := (\rho \bmod 3)$ and for any prime $p$ let $R_p$ be the local versal deformation ring at $p$.
A prime $p \notin S$ \emph{raises the level} of $\rho$ if  
there is a smooth quotient 
\begin{equation}\label{smoothp}
R_p \to \Z_3[[T_1,\ldots, T_{n_p}]]
\end{equation}
isomorphic to a power series ring over $\Z_3$ with 
\[
n_p = H^0(G_p, \operatorname{Ad}^0(\overline{\rho}))
\]
together with a representation 
\[
\rho^{(p)}: \Gal(\overline{\Q}/\Q) \to \SL_2(\Z_3)
\]
such that
\begin{enumerate}
    \item $\rho^{(p)} \equiv \rho \bmod 3$;
    \item $\rho^{(p)}$ is ramified at $p$ and unramified outside $S\cup \{p\}$;
    \item $\rho^{(p)}|_{G_v}$ factors through $\Z_3[[T_1,\dots,T_{n_v}]]$
for all $v\in S \cup \{p \}$. 
\end{enumerate}
\noindent In line with the previous notation, let  $\mathcal{C}_p$ denote the class of local deformations of $\overline{\rho}|_{G_p}$ that factor through the smooth quotient \eqref{smoothp} of $R_p$.
%Finally, for $T= S$ or $T=S\cup \{p\}$,
%let $R_{(\mathcal{N}_v)_{v \in T}}$ be the global deformation ring parametrizing lifts $\varrho$ of $\overline{\rho}$
%such that $\varrho|_{G_v} \in \mathcal{C}_v$ at all $v\in T$.

\begin{proof}[Proof of Corollary \ref{selmerdensity}]
Let  $\infty$ denote the Archimedean place of $\Q$ and let
$
S=\{\ell, 3, \infty\}.
$
It follows from the results in \cite{even1}
that
for each $v\in S$, the local deformation ring $R_v$ has a smooth quotient
\begin{equation} \label{smooth}
R_v \to \Z_3[[T_1,\dots,T_{n_v}]]
\end{equation}
with tangent space $\mathcal{N}_v \subseteq H^1(G_v, \operatorname{Ad}^0(\overline{\rho}))$, where  
\[
\mathcal{N}_\ell = H^1_{\operatorname{unr}}(G_\ell, \operatorname{Ad}^0(\overline{\rho})), \quad  
\mathcal{N}_3 = H^1(G_3, \operatorname{Ad}^0(\overline{\rho})), \quad \mathcal{N}_\infty = 0,
\]
for which 
the global setting is balanced of rank zero at $S$.\\

\noindent
For $p \in \mathcal{C}$,
let
$\sigma_p \in G_p$ be a lift of the Frobenius automorphism and $\tau_p \in G_p$ a generator of inertia.
Let $\mathcal{C}_p$ 
be the class of 
$\overline{\rho}|_{G_p}$-deformations 
$\varrho: G_p \to \SL_2(A)$ for Artin algebras $A$ over $\Z_3$
such that
\begin{equation}\label{newatp}
\varrho(\sigma_p) = 
\begin{pmatrix}
    p^{1/2} & 1 + x \\ & p^{-1/2}
\end{pmatrix}, 
\quad
\varrho(\tau_p) = \begin{pmatrix}
    1 &  y \\ & 1
\end{pmatrix}
\end{equation}
for some $x,y$ in the maximal ideal of $A$.
Then the local deformation ring $R_p$ has a quotient
isomorphic to a power series ring  $\Z_3[[T_1,\ldots, T_{n_p}]]$
with 
$n_p = \dim H^0(G_p, \operatorname{Ad}^0(\overline{\rho}))$ 
such that 
$\mathcal{C}_p$ is the class of deformations 
that factor through $\Z_3[[T_1,\ldots,T_{n_p}]]$.
By an application of Wiles' formula, we conclude that  
the global setting remains balanced after allowing ramification at $p$: 
\[
  \dim H^1_{\mathcal{N}}(G_{S\cup \{ p \}}, \operatorname{Ad}^0(\overline{\rho}) )
  = 
  \dim H^1_{\mathcal{N}^\perp}(G_{S\cup \{ p \}}, \operatorname{Ad}^0(\overline{\rho})^* ).
\]
Let $f^{(p)}$ be the unique global cohomology class in $H^1(G_{S\cup \{p\} }, \operatorname{Ad}^0(\overline{\rho}))$ that is ramified at $p$ and unramified at $\ell$. Then 
for all $v\in S=\{\ell, 3 ,\infty\}$, we have
$f^{(p)}|_{G_v} \in \mathcal{N}_v$.
Moreover, $f^{(p)}|_{G_p} \notin \mathcal{N}_p$ if and only if 
$f(p, K^{(p)}/\Q) = 9$ if and only if 
\[
\dim H^1_{\mathcal{N}}(G_{S\cup \{ p \} }, \operatorname{Ad}^0(\overline{\rho}))=
\dim H^1_{\mathcal{N}}(G_{S }, \operatorname{Ad}^0(\overline{\rho})),
\]
proving Corollary \ref{selmerdensity}.
\end{proof}

\bibliographystyle{amsplain} 
\bibliography{references.bib}
\end{document}